\newcommand{\R}{\mathbf R}
\newcommand{\T}{\mathbf T}
\renewcommand{\S}{\mathbf S}
\newcommand{\N}{\mathbf N}
\renewcommand{\AA}{\mathcal A}
\newcommand{\BB}{\mathcal B}
\newcommand{\OO}{\mathcal O}
\newcommand{\FF}{\mathcal F}
\newcommand{\NN}{\mathcal N}
\newcommand{\EEE}{\mathscr E}
\newcommand{\LLL}{\mathscr L}
\newcommand{\XXX}{\mathscr X}
\theoremstyle{plain}
\newtheorem{theo}{Theorem}
\newtheorem{prop}[theo]{Proposition}
\newtheorem{lem}[theo]{Lemma}
\newtheorem{cor}[theo]{Corollary}
\theoremstyle{remark}
\theoremstyle{definition}
\numberwithin{equation}{section}
\numberwithin{theo}{section}
\def\le{\leqslant}
\def\ge{\geqslant}
\def\leq{\leqslant}
\def\geq{\geqslant}
\DeclareMathOperator{\Span}{span}
\DeclareMathOperator{\id}{Id}
\def\eps{{\varepsilon}}
\newcommand{\Nt}{|\hskip-0.04cm|\hskip-0.04cm|}
\newcommand{\la}{\langle}
\newcommand{\ra}{\rangle}
\newcommand{\lla}{\left\langle}
\newcommand{\rra}{\right\rangle}
\newsavebox{\@brx}
\newcommand{\dlla}[1][]{\savebox{\@brx}{\(\m@th{#1\langle}\)}%
	\mathopen{\copy\@brx\mkern2mu\kern-0.9\wd\@brx\usebox{\@brx}}}
\newcommand{\drra}[1][]{\savebox{\@brx}{\(\m@th{#1\rangle}\)}%
	\mathclose{\copy\@brx\mkern2mu\kern-0.9\wd\@brx\usebox{\@brx}}}
\renewcommand{\d}{\,\mathrm{d}}
\renewcommand{\dv}{\,\mathrm{d}v}
\newcommand{\Integ}{\mathbf{I}}	
\newcommand{\disg}{\mathbf{E}^*}
\newcommand{\spg}{\mathbf{E}}
\newcommand{\spp}{\mathbf{X}}
\newcommand{\dispa}{\mathbf{X^*}}
\newcommand{\disp}{\mathbf{Y}}
\newcommand{\sppt}{\XXX}
\newcommand{\spgt}{\EEE}
\newcommand{\sppdt}{\XXX_{\star}}
\newcommand{\step}[2]{\medskip\noindent\textit{Step #1: #2.}}
\title{Non-cutoff Boltzmann equation with soft potentials in the whole space}
\author[K.~Carrapatoso]{Kleber Carrapatoso}
\author[P.~Gervais]{Pierre Gervais}
\date{\today}
\address[K. Carrapatoso]{Centre de Math\'ematiques Laurent Schwartz, \'Ecole
  Polytechnique, Institut Polytechnique de Paris, 91128 Palaiseau cedex, France}
\email{kleber.carrapatoso@polytechnique.edu}
\address[P.~Gervais]{Universit\`{a} degli Studi di Torino, Department of Economics, Social Sciences, Applied Mathematics and Statistics ``ESOMAS'', Corso Unione Sovietica, 218/bis, 10134 Torino, Italy.}
\email{Pierre.Gervais@ens.fr}
\begin{document}

\begin{abstract}
We prove the existence, uniqueness and convergence of global solutions to the Boltzmann equation with non-cutoff soft potentials in the whole space when the initial data is a small perturbation of a Maxwellian with polynomial decay in velocity. Our method is based in the decomposition of the desired solution into two parts: one with polynomial decay in velocity satisfying the Boltzmann equation with only a dissipative part of the linearized operator; the other with Gaussian decay in velocity verifying the Boltzmann equation with a coupling term.
\end{abstract}

\maketitle

\tableofcontents

\section{Introduction}

Consider the Boltzmann equation for the unknown $F=F(t,x,v)$, with~$t \ge 0$, $x\in \R^3$, and~$v \in \R^3$: 
\begin{equation}\label{eq:main}
\partial_t F + v \cdot \nabla_x F = Q(F,F)
\end{equation}
complemented with an initial data $F_0 = F_0(x,v)$.
The collision operator $Q$ is bilinear and acts only on the velocity variable $v \in \R^3$, which represents the fact that collisions are supposed to be localized in space, and it reads
\begin{equation}\label{eq:opBoltzmann}
Q(f,g) (v) = \int_{\R^3} \int_{\S^{2}} B(v-v_*,\sigma) 
\left[ f(v'_*) g(v') - f(v_*) g(v) \right]  \d\sigma \d v_* .
\end{equation}
The pre- and post-collision velocities $(v',v'_*)$ and $(v,v_*)$ are given by
\begin{equation}\label{eq:def-v'-v'*}
v' = \frac{v+v_*}{2} + \frac{|v-v_*|}{2}\sigma 
\quad\text{and}\quad
v'_* = \frac{v+v_*}{2} - \frac{|v-v_*|}{2}\sigma
\end{equation}
which is one possible parametrization of the conservation of momentum and energy in an elastic collision
$$
v' + v'_* = v + v_* \quad\text{and}\quad
|v'|^2 + |v'_*|^2 = |v|^2 + |v_*|^2 .
$$
The collision kernel $B(v-v_*,\sigma)$ encodes the physics of the interaction between particles. It is assumed to be nonnegative and to depend only on the relative velocity $|v-v_*|$ and the angle $\cos \theta = \sigma \cdot \frac{(v-v_*)}{|v-v_*|}$ as
$$
B(v-v_*,\sigma) = |v-v_*|^\gamma b(\cos \theta),
$$
where $-3 < \gamma \le 1$ and the angular part $b$ is a smooth function (except maybe at $\theta = 0$). As it is standard now, we may suppose, without loss of generality, that $\theta \in [0,\pi/2]$ by replacing $B$ by its symmetrized version if necessary.

In this paper we shall consider the case of \textit{non-cutoff soft potentials}, more precisely we assume that $b$ is an implicit function that is locally smooth and has a non-integrable singularity at $\theta=0$ as
\begin{equation}\label{eq:noncutoff1}
\sin\theta\,b(\cos\theta) \, \underset{\theta \sim 0}{\approx} \, C_b \, \theta^{-1-2s}
\quad\text{with}\quad
s \in (0, 1),
\end{equation}
for some constant $C_b>0$, and  
\begin{equation}\label{eq:noncutoff2}
-1 < \gamma + 2s < 0
\quad \text{and} \quad 
-3/2 -s< \gamma  <0.
\end{equation}

\medskip

Let $\mu(v) = (2\pi)^{-3/2} e^{-|v|^2/2}$ be the standard Maxwellian and define the perturbation 
$$
F = \mu + f
$$
which satisfies
\begin{equation}\label{eq:main-pert}
\left\{ 
\begin{aligned}
& \partial_t f + v \cdot \nabla_x f = \LLL f + Q(f,f) \\
& f_{|t=0} = f_0 = F_0 - \mu 
\end{aligned}
\right.
\end{equation}
where $\LLL$ is the linearized collision operator given by
$$
\LLL f = Q(\mu,f) + Q(f, \mu).
$$
We also denote by $\Lambda$ the full linearized operator
$$
\Lambda := \LLL - v \cdot \nabla_x.
$$

It is well known (see for instance \cite{AU1982}) that $\LLL$ is a nonnegative self-adjoint operator on the space~$L^2_v (\mu^{-1/2}) = \left\{ g:\R^3 \to \R \mid \int_{\R^3} |g|^2 \mu^{-1} \, \d v < \infty \right\}$ with kernel given by
$$
\mathrm{ker} (\LLL) = \Span \{ \mu , v_1 \mu , v_2 \mu , v_3 \mu , |v|^2 \mu\}.
$$
We define $\pi$ to be the orthogonal projection onto $\mathrm{ker} (\LLL)$ so that we can decompose
$$
f = \pi f + f^\perp , \quad f^\perp := f - \pi f
$$
with
\begin{equation}
	\label{eq:pif}
	\pi f = \left\{ \rho[f] + u[f] \cdot v + \theta[f] \frac{(|v|^2-3)}{2} \right\} \mu
\end{equation}
where 
$$
\rho[f] = \int_{\R^3} f \d v , \quad
u[f] = \int_{\R^3} v f \d v , \quad
\theta[f] = \int_{\R^3} \frac{(|v|^2-3)}{3} f \d v .
$$

\subsection{Main result}

Before stating our main result we shall introduce the functional spaces we work with. If $X$ is a function space and $w$ a non-negative function, we define the weighted space $X(w)$ as the space associated to the norm
$$
\| f \|_{X(w)} := \| w f \|_X.
$$
In particular, for a weight function $m=m(v)$, we consider the weighted Lebesgue space $L^2_x L^2_v(m)$ as the space associated to the inner product
$$
\la f , g \ra_{L^2_x L^2_v (m)} := \la m f , m g \ra_{L^2_{x,v}}
$$
and the corresponding norm
$$
\| f \|_{L^2_x L^2_v (m)} := \| m f \|_{L^2_{x,v}},
$$
where $\la \cdot , \cdot \ra_{L^2_{x,v}}$ and $\| \cdot \|_{L^2_{x,v}}$ denote the usual inner product and norm of $L^2 (\R^3_x \times \R^3_v)$.


We consider polynomial weight functions $m(v) = \la v \ra^k := (1+|v|^2)^{k/2}$ with $k >0$, and we introduce the anisotropic dissipation space in velocity~$H^{s, *}_v(m)$, inspired from the one presented in~\cite{HTT2020}, as the space associated to the norm
\begin{align}
\| f \|_{H^{s,*}_v(m)}^2 &:= \| \la v \ra^{\gamma / 2} f \|_{L^2_v(m)}^2 + \| f \|_{\dot{H}^{s,*}_v(m)}^2, \label{eq:def-Hs*}\\
\| f \|_{\dot{H}^{s, *}_v(m)}^2 &:= \int_{\R^3 \times \R^3 \times \S^2} b \left(\cos \theta\right) \mu(v_*) \la v_* \ra^{\gamma} \left(  \FF - \FF' \right)^2  \d \sigma  \d v_*  \d v , \label{eq:def-dotHs*}
\end{align}
where we use the shorthand $ \FF = \FF(v) := m(v) \la v \ra^{\gamma/2} f(v) $ and $ \FF' = \FF(v') $ recalling that $v'$ is defined in \eqref{eq:def-v'-v'*}, and which satisfies the following bound (see Lemma \ref{lem:prop_anisotropic_norm}):
\begin{equation*}
	\| \la v \ra^{\gamma/2} f \|_{H^{s}_v(m)} \lesssim \| f \|_{H^{s,*}_v(m)} \lesssim \| \la v \ra^{\gamma/2 + s} f \|_{H^{s}_v(m)}.
\end{equation*}

For functions $f=f(x,v)$ depending on the position $x$ and velocity $v$ variables, we also define the polynomially weighted spaces $\spp(m)$, $\dispa(m)$ and $\disp(m) $ as the spaces associated to the norms
\begin{equation}\label{eq:def-bfX}
\begin{aligned}
\| f \|^2_{\spp(m)} &:= \| f \|^2_{L^2_{x, v}(m)} + \| \la v \ra^{-6 s} \nabla_x^3 f \|_{ L^2_{x, v}(m) }^2, 
\end{aligned}
\end{equation}
\begin{equation}\label{eq:def-bfX*}
\begin{aligned}
\| f \|^2_{\dispa(m)} &:= \| f \|^2_{L^2_{x} H^{s, *}_v(m)}  + \| \la v \ra^{-6 s} \nabla_x^3 f \|_{ L^2_{x}H^{s, *}_v(m) }^2,
\end{aligned}
\end{equation}
and
\begin{equation}\label{eq:def-bfY}
	\| f \|^2_{\disp(m)} := \| f^\perp \|^2_{\dispa(m)} + \| \nabla_x \pi f \|_{ H^2_x L^2_v }^2.
\end{equation}
respectively.

Finally, we denote the Fourier transform $f \mapsto \widehat{f}$ with respect to the space variable $x\in \R^3$ defined as
$$
\widehat{f}(\xi, v) := \frac{1}{(2\pi)^{3/2}}\int_{\R^3} e^{i x \cdot \xi} f(x, v) \d x, \quad \forall \xi \in \R^3.
$$

\medskip

We can now state our main result:
\begin{theo}\label{theo:non-cutoff}
Assume \eqref{eq:noncutoff1}--\eqref{eq:noncutoff2} hold. Consider $k > 13/2 + 7|\gamma| / 2 + 8s$ and define the weight function $m = \la v \ra^k$. There exists $\eps_0 > 0$ small enough such that any initial data $f_0 \in \spp(m)$ satisfying $\| f_0 \|_{\spp(m)} \le \eps_0$ gives rise to a unique global weak solution $f \in L^\infty( \R_+ ; \spp(m ) ) \cap  L^2( \R_+ ; \disp(m) )$ to \eqref{eq:main-pert}, which satisfies the energy estimate
\begin{equation}\label{eq:theo:energy_estimate}
		\sup_{t \ge 0} \| f(t) \|_{\spp(m)}^2 + \int_0^\infty  \| f(t) \|_{\disp(m)}^2  \d t \lesssim \| f_0 \|_{\spp(m)}^2.
	\end{equation}

If moreover the initial data  $\widehat f_0 \in L^p_\xi L^2_v(\la v \ra^{-8s} m)$ satisfies $\| f_0 \|_{\spp(m)} + \| \widehat f_0 \|_{L^p_\xi L^2_v( \la v \ra^{-8s} m)} \le \eps_0$	with $p \in (2 , \infty]$, then for any $0 < \vartheta < \frac{3}{2} \left(\frac{1}{2} - \frac{1}{p} \right)$ and $k_0 < k - |\gamma|$ we have the decay estimate
\begin{equation}\label{eq:theo:decay_estimate}
\| f(t) \|_{\spp(m_0)} \lesssim (1+t)^{-\vartheta} \left( \| f_0 \|_{\spp(m)} +  \| \widehat f_0 \|_{L^p_\xi L^2_v(\la v \ra^{-8s} m)} \right),
\end{equation}
for all $t \ge 0$, where $m_0 = \la v \ra^{k_0}$.
\end{theo}

We now briefly review the known results for the Boltzmann equation near Maxwellian in the torus $\T^3$ and the whole space $\R^3$.

We start by considering the case of cutoff potentials, which corresponds to angular kernels $b$ for which the singularity in~\eqref{eq:noncutoff1} is removed by assuming $b$ integrable. By working near equilibrium, Grad~\cite{G1965} constructed in 1965 the first spatially inhomogeneous solutions for short times. Ukai~\cite{U1974,U1976} gave in the 1970's a new impulse to the Cauchy problem and established, in the case of hard potentials $\gamma \in [0,1]$, the existence of global solutions in $L^\infty_v H^s_x \left( \la v \ra^k \mu^{-1} \d v \right)$, first in the periodic box $\T^3$ in 1974 \cite{U1974}, then in the whole space $\R^3$ in 1976 \cite{U1976}, by relying on spectral studies of the linearized equation \cite{U1974, EP1975, AU1982} (let us mention also \cite{S1983}). The case of soft potentials was then treated in 1980 by Caflisch~\cite{C1980_2}, then in 1982 by Asano and Ukai~\cite{AU1982} only for $\gamma \in (-1, 0)$, but this approach was recently extended to the full range $\gamma \in (-3, 0)$ by Sun and Wu~\cite{SW2021} in 2021 and then Deng~\cite{D2022} in 2022. These results were then proven using energy methods in spaces of the form $L^2_v H^s_x \left( \mu^{-1} \d v \d x \right)$ by Kawashima~\cite{K1990}, Liu, Yang and Yu~\cite{LYY2004}, Guo~\cite{G2003} and Guo and Strain~\cite{GS2006, GS2008}, as well as Duan~\cite{D2008}.

Concerning the non-cutoff case, the first existence result near equilibrium attributed to Ukai~\cite{U1984} ; he constructed local solutions for analytic initial data in $(x, v)$ having Gaussian decay using the Cauchy-Kowalewski theorem. Between 2011 and 2012, Gressman and Strain \cite{GS2011_1, GS2011_2} (in the torus), and Alexandre, Morimoto, Ukai, Xu, Yang \cite{AMUXY2011_1, AMUXY2011_2, AMUXY2012} (in the whole space) constructed the first global solutions in spaces of the form $H^s_{x, v} \left( \la v \ra^k \mu^{-1} \d x \d v \right)$ by working with anisotropic norms. In the whole space framework, Strain~\cite{Strain} obtained the optimal time-decay for solutions in the whole space. Later Sohinger and Strain~\cite{SS2014} extended these results to some Besov spaces in 2014, and Fang and Wang~\cite{FW2022} relaxed some technical regularity and integrability assumptions in 2022. Recently, in the case of the torus, Duan, Liu, Sakamoto and Strain~\cite{DLSS2021} obtained the existence of small-amplitude solutions, that is, in the space $L^1_\xi L^\infty_t L^2_v ( \mu^{-1} \d x \d v )$ where $\xi$ denotes the Fourier variable in space.
Let us also mention two very recent works in the case of the whole space: Deng~\cite{D2022_2}, in the case of hard potentials, who constructec global solutions by working with an anisotropic norm defined from the pseudo-differential study of Alexandre, Hérau and Li~\cite{AHL2019}; and also Duan, Sakamoto and Ueda~\cite{DSU2022} who constructed low regularity solutions (as in~\cite{DLSS2021}) in the case of hard and moderately soft potentials, obtaining also obtained the decay in time of solutions.

All the above results concern solutions with Gaussian decay in velocity, that is, they hold in functional spaces with a weight in velocity of the form $ \mu^{-1}\d v$. 
In 2017, Gualdani, Mischler and Mouhot~\cite{GMM2017}, in the line of \cite{M2006}, constructed solutions with polynomial decay in velocity. More precisely they relaxed the integrability conditions of previous results and constructed solutions in $W^{\ell, p}_v W^{s, q}_x \left( \la v \ra^k \d v \d x \right)$, in the case of hard spheres in the torus $\T^3$. In the same framework, the case of non-cutoff hard potentials was treated in \cite{HTT2020, AMSY2020}, and that of non-cutoff soft potentials in \cite{CHJ2022}.
Very recently, still in the torus and also in spaces with polynomial weights, the case of cutoff soft potentials was studied by Cao~\cite{C2022}.

\medskip

Our result in Theorem~\ref{theo:non-cutoff} gives then, up to our knowledge, the first result of existence of global solutions with polynomial decay in velocity to the non-cutoff Boltzmann equation in the whole space. Moreover we also establish the convergece of solution to the Maxwellian. Inspired by the strategy of \cite{BMM2019}, we shall construct a solution $f$ to \eqref{eq:main-pert} by considering a decomposition of the form~$f = h + g$, where $h(t) \in \spp(m)$ has polynomial decay in velocity and satisfies a ``nice'' semilinear equation in which only a dissipative part of the linearized operator $\Lambda$ is present, and $g(t)$ has Gaussian decay in velocity and evolves according to the Boltzmann equation plus some coupling term coming from~$h$, with convenient decay properties in time and velocity. This system will then be solved using an iterative scheme and an energy method, where it its crucial to have better decay properties for $h$ in order to treat the coupling term in the equation for $g$. We will then combine the energy estimate from the well-posedness theory with dispersive-type estimates for the equation associated to $g$, similarly as in \cite{DSU2022}, in order to obtain the decay estimate.
In Section~\ref{scn:nonlinear_estimates} we prove the necessary nonlinear homogeneous estimates on $Q$, and in Section~\ref{scn:linear_estimates} we prove the necessary coercive-type estimate on the linear part of the equation for $h$ and recall those related to the equation for $g$. In Section~\ref{sec:nonlinear_estimates_inhom} we prove nonlinear inhomogeneous estimates associated to $Q$. We then proceed to prove our main result in Section~\ref{scn:cauchy_th_non_cutoff}.

\subsection{Notations}

The relation denoted~$A \lesssim B$ is to be understood as $A \leq C B$ for some uniform constant $C > 0$, and $A \approx B$ as both $A \lesssim B$ and $B \lesssim A$.


When considering a function $f(v)$ depending on the velocity variable, we shall use the standard shorthand notations
\begin{equation}\label{eq:notation-vv*v'v'*}
f=f(v), \quad 
f' = f(v'), \quad 
f_* = f(v_*), \quad 
f'_* = f(v'_*),
\end{equation}
where we recall that the pre- and post-collision velocities $(v',v'_*)$ and $(v,v_*)$ are defined in~\eqref{eq:def-v'-v'*}.

\section{Estimates on the collision operator}
\label{scn:nonlinear_estimates}

This section is devoted to spatially homogeneous estimates on the collision operator~$Q$. We present some auxiliary results in Section \ref{scn:auxiliary_results} which we will use to prove estimates on $Q$ in polynomially weighted spaces in Section \ref{scn:est_hom_nc_poly}.

\subsection{Auxiliary results}
\label{scn:auxiliary_results}

We state a few results that will be useful in the sequel.
This first lemma will be used to estimates integrals against the kinetic part $|v-v_*|^\gamma$ of the collision kernel $B(v-v_*, \sigma)$.
\begin{lem}
	\label{lem:kinpot_convo}
	Let $\alpha \in (0,3)$ and $s \in (0,1]$. For any smooth enough function $f=f(v)$ one has:
	\begin{enumerate}[leftmargin=*,itemsep=2pt]
		
		\item If $0 < \alpha < \frac{3}{2}$ then for any $\ell > \frac{3}{2}$ there holds, for any $v \in \R^3$,
		\begin{equation}
			\label{eq:kinpot_convo_0_3/2}
			\int_{\R^3} |v-v_*|^{-\alpha} |f(v_*)| \d v_* \lesssim \langle v \rangle^{-\alpha} \| \langle v \rangle^{\ell} f \|_{L^2_v}. 
		\end{equation}
		
		\item If $0 < \alpha < \frac{3}{2}+s$ then for any $\ell > \frac{3}{2}+s$ there holds, for any $v \in \R^3$,
		\begin{equation}
			\label{eq:kinpot_convo_0_3/2_bis}
		\int_{\R^3} |v-v_*|^{-\alpha} |f(v_*)| \d v_* \lesssim \langle v \rangle^{-\alpha} \| \langle v \rangle^{\ell} f \|_{H^s_v}. 
		\end{equation}
		
%

	\end{enumerate}
\end{lem}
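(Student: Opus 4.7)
The plan is to reduce both statements to a purely kinematic moment integral. Writing $|f(v_*)| = \langle v_* \rangle^{-\ell} \cdot \langle v_* \rangle^{\ell} |f(v_*)|$ and applying Hölder's inequality with conjugate exponents $(q, q')$ gives the pointwise bound
\[
\int_{\R^3} |v-v_*|^{-\alpha} |f(v_*)| \d v_* \le \left( \int_{\R^3} |v-v_*|^{-\alpha q} \langle v_* \rangle^{-\ell q} \d v_* \right)^{1/q} \| \langle v \rangle^{\ell} f \|_{L^{q'}_v}.
\]
For part~(1) I would take $q = q' = 2$; for part~(2) I would take $q' = 6/(3-2s)$ and $q = 6/(3+2s)$, so that the sharp Sobolev embedding $H^s(\R^3) \hookrightarrow L^{q'}(\R^3)$ (valid for $s \in [0, 3/2)$, and in particular for $s \in (0, 1]$) produces $\| \langle v \rangle^{\ell} f \|_{L^{q'}_v} \lesssim \| \langle v \rangle^{\ell} f \|_{H^s_v}$, after absorbing the weight $\langle v_* \rangle^{\ell}$ into the $H^s$ norm.

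Both parts then reduce to the single moment estimate: for exponents $0 < a < 3$ and $b > \max(3, a)$,
\[
J_{a, b}(v) := \int_{\R^3} |v-v_*|^{-a} \langle v_* \rangle^{-b} \d v_* \lesssim \langle v \rangle^{-a}.
\]
I would establish this by the usual trichotomy. The bound is immediate when $|v| \lesssim 1$, by local $L^1$-integrability of $|v-v_*|^{-a}$ (using $a < 3$) combined with that of $\langle v_* \rangle^{-b}$ (using $b > 3$). For $|v| \gg 1$, split $\R^3$ into the three regions $A_1 = \{|v-v_*| \le 1\}$, $A_2 = \{1 \le |v-v_*| \le |v|/2\}$, and $A_3 = \{|v-v_*| \ge |v|/2\}$. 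On $A_1 \cup A_2$ one has $\langle v_* \rangle \approx \langle v \rangle$, so the weight contributes $\langle v \rangle^{-b}$; the residual kernel integral of $|v-v_*|^{-a}$ is $O(1)$ on $A_1$ and $O(|v|^{3-a})$ on $A_2$, yielding contributions $\lesssim \langle v \rangle^{-b}$ and $\lesssim \langle v \rangle^{3-a-b}$ respectively, both absorbed into $\langle v \rangle^{-a}$ thanks to $b > 3 \ge a$. On $A_3$ the kernel is $\lesssim \langle v \rangle^{-a}$ while the remaining $\langle v_* \rangle^{-b}$-integral is finite.

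Substituting $a = \alpha q$ and $b = \ell q$ and taking the $1/q$-th power recovers the claim, with the hypotheses tuned precisely so that $a < 3 < b$: in part~(1), $2\alpha < 3 \Leftrightarrow \alpha < 3/2$ and $2\ell > 3 \Leftrightarrow \ell > 3/2$; in part~(2), $\alpha q < 3 \Leftrightarrow \alpha < 3/2 + s$ and $\ell q > 3 \Leftrightarrow \ell > 3/2 + s$. The main subtlety, and essentially the only non-mechanical step, is recognising that the Sobolev exponent $q' = 6/(3-2s)$ in part~(2) is exactly the one whose dual $q$ saturates the borderline of the singularity $|v-v_*|^{-\alpha}$ at $\alpha = 3/2 + s$; any other choice of $q'$ would either fail the embedding or impose a strictly stronger hypothesis on $\alpha$ or $\ell$.
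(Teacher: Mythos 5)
Your proof is correct and follows essentially the same route as the paper: split $|f|$ against a weight, apply H\"older with $q'=2$ (resp.\ $q'=6/(3-2s)$ combined with the Sobolev embedding $H^s(\R^3)\hookrightarrow L^{6/(3-2s)}(\R^3)$), and reduce to the moment bound $\int |v-v_*|^{-a}\la v_*\ra^{-b}\,\mathrm d v_*\lesssim\la v\ra^{-a}$. The only cosmetic difference is that the paper cites this moment estimate from \cite[Lemma~3.3]{CTW} rather than reproving it via the trichotomy, which you carry out correctly.
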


\begin{proof}  
	From \cite[Lemma 3.3]{CTW} for instance one has, for any $0 < \beta < 3$ and $\vartheta>3$,
	\begin{equation}\label{eq:CTW}
		\int_{\R^3} |v-v_*|^{-\beta} \langle v_* \rangle^{-\vartheta} \d v_* \lesssim \langle v \rangle^{- \beta}, \quad \forall v \in \R^3.
	\end{equation}
	We now write for $p \in [1,\infty]$, thanks to H\"older's inequality,
	$$
	\int_{\R^3} |v-v_*|^{-\alpha} |f(v_*)| \d v_*
	\le \left( \int_{\R^3} |v-v_*|^{-\alpha \frac{p}{p-1}} \langle v_* \rangle^{-\ell \frac{p}{p-1}} \d v_* \right)^{\frac{p-1}{p}} \| \langle v \rangle^{\ell} f \|_{L^p_v} .
	$$
	We then conclude by using \eqref{eq:CTW} with: $p=2$ if $0 < \alpha < \frac{3}{2}$ ; and $p=\frac{6}{3-2s}$ if $0 < \alpha < \frac{3}{2}+s$ by using the Sobolev embedding $H^s(\R^3) \hookrightarrow L^{\frac{6}{3-2s}}(\R^3)$. 
\end{proof}

\begin{lem}
	\label{lem:kinpot_convo_schwartz}
	Let $\varphi=\varphi(v)$ be a Schwartz function and $\alpha \in (0,3)$. For any $\eta \in (0, 1)$ and $ \ell >0$ there is $C>0$ such that there holds, for any $v \in \R^3$,
	\begin{equation*}
		\int_{\R^3} |v-v_*|^{-\alpha} \varphi(v_*) \d v_* \leq \eta \la v \ra^{-\alpha} + C \la v \ra^{-\ell}.
	\end{equation*}
\end{lem}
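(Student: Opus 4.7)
The strategy is to localize the integration in $v_*$ and exploit the rapid (Schwartz) decay of $\varphi$ far from the origin, together with the improved decay $\la v \ra^{-\alpha}$ provided by Lemma~\ref{lem:kinpot_convo}. Roughly, the term $C\la v\ra^{-\ell}$ absorbs the contribution of the bounded part of $\varphi$, while the tail of $\varphi$ can be made arbitrarily small, yielding the prefactor $\eta$ in front of $\la v\ra^{-\alpha}$.

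\textbf{Main steps.} Fix a large parameter $R=R(\eta,\ell)\geq 1$ to be chosen at the end, and write
\[
	\int_{\R^3}|v-v_*|^{-\alpha}\varphi(v_*)\d v_* = I_R^{\rm in} + I_R^{\rm out},
\]
where $I_R^{\rm in}$ is the integral restricted to $\{|v_*|\leq R\}$ and $I_R^{\rm out}$ the one over $\{|v_*|>R\}$.

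\emph{Outer piece.} Since $\varphi$ is Schwartz, for any $N>0$ one has $|\varphi(v_*)|\leq C_N\la v_*\ra^{-N}$. Fix $N_0>3$ and write, on $\{|v_*|>R\}$,
\[
	|\varphi(v_*)| \leq C_N \la v_*\ra^{-N_0}\la v_*\ra^{-(N-N_0)} \leq C_N R^{-(N-N_0)}\la v_*\ra^{-N_0}.
\]
Applying the pointwise bound~\eqref{eq:CTW} of Lemma~\ref{lem:kinpot_convo} (with $\vartheta=N_0>3$) yields
\[
	I_R^{\rm out} \leq C_N R^{-(N-N_0)}\la v\ra^{-\alpha}.
\]
Choosing $N$ and $R$ large enough (depending on $\eta$), the constant $C_NR^{-(N-N_0)}$ is $\leq\eta$, giving $I_R^{\rm out}\leq \eta\la v\ra^{-\alpha}$.

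\emph{Inner piece.} On $\{|v_*|\leq R\}$ we split according to the size of $|v|$. If $|v|\leq 2R$, the kernel $|v-v_*|^{-\alpha}$ is locally integrable (since $\alpha<3$), so $I_R^{\rm in}\leq \|\varphi\|_{L^\infty}\int_{|v-v_*|\leq 3R}|v-v_*|^{-\alpha}\d v_* \leq C(R,\varphi)$; but then $\la v\ra^{-\ell}\geq\la 2R\ra^{-\ell}$ on this region, so $I_R^{\rm in}\leq C(R,\varphi)\la 2R\ra^{\ell}\la v\ra^{-\ell}$. If $|v|>2R$, then $|v_*|\leq R\leq |v|/2$ forces $|v-v_*|\geq |v|/2$, hence $|v-v_*|^{-\alpha}\leq 2^{\alpha}|v|^{-\alpha}\lesssim\la v\ra^{-\alpha}$; since $\la v\ra\geq 2R\geq 1$, we can rewrite $\la v\ra^{-\alpha}=\la v\ra^{-\ell}\la v\ra^{\ell-\alpha}$ and bound it by $\la 2R\ra^{\ell-\alpha}\la v\ra^{-\ell}$ when $\ell\leq \alpha$ (the case $\ell>\alpha$ follows by monotonicity, reducing to $\ell=\alpha$). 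In both sub-cases, $I_R^{\rm in}\leq C(R,\varphi,\ell)\la v\ra^{-\ell}$, which is absorbed in $C\la v\ra^{-\ell}$.

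\textbf{Main obstacle.} The delicate point is the choice of the cutoff $R$: increasing $R$ shrinks the outer contribution (through the Schwartz decay) but inflates the constant $C(R,\varphi,\ell)$ in the inner contribution. This is harmless because $\eta$ and $\ell$ are fixed a priori and $C$ is allowed to depend on them; the critical observation is that these two competing effects decouple, one controlling the prefactor of $\la v\ra^{-\alpha}$ and the other the prefactor of $\la v\ra^{-\ell}$. Summing the two contributions produces exactly the claimed inequality.
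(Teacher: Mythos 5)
Your proof follows the same broad idea as the paper — localize, use the Schwartz decay of $\varphi$ in the tail, and pay a harmless constant for the near part — but with a genuinely different decomposition: you split in $|v_*|\lessgtr R$ rather than in $|v-v_*|\lessgtr M$ as the paper does. Your outer piece and the inner piece for $|v|\le 2R$ are fine.

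The gap is in the inner piece for $|v|>2R$. There you correctly obtain
$I_R^{\rm in}\le 2^\alpha\|\varphi\|_{L^1(|v_*|\le R)}\,\la v\ra^{-\alpha}$,
but the prefactor is of order $\|\varphi\|_{L^1}$, not small, and so cannot be absorbed into the $\eta\la v\ra^{-\alpha}$ term; it must go into $C\la v\ra^{-\ell}$. This works when $\ell\le\alpha$, as you verify, but for $\ell>\alpha$ you write that the case ``follows by monotonicity, reducing to $\ell=\alpha$''. That reduction is backwards: for $|v|$ large one has $\la v\ra^{-\ell}\le\la v\ra^{-\alpha}$, so the conclusion for $\ell>\alpha$ is strictly \emph{stronger} than for $\ell=\alpha$ and cannot be obtained from it. Concretely, for $|v|>2R$ your bound gives $C\la v\ra^{-\alpha}$, and $C\la v\ra^{-\alpha}\le C'\la v\ra^{-\ell}$ fails uniformly when $\ell>\alpha$. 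Since the lemma is applied downstream (in the proof of Lemma~\ref{lem:wdissipativity_L_cutoff}) precisely with $\ell$ large, this is the relevant regime and the step needs to be repaired, not deferred. It is also worth noting that this difficulty is not an artifact of your split: for $|v|\to\infty$ one has $\int|v-v_*|^{-\alpha}\varphi(v_*)\,\d v_*\sim\big(\int\varphi\big)\,|v|^{-\alpha}$, so any decomposition faces the same obstacle at large $|v|$; the paper's own proof sidesteps it only by allowing $M\ge\max(1,2|v|)$ to depend on $v$, which silently makes the constant in its near-field bound $I_2\lesssim\la v\ra^{-\ell}$ non-uniform in $v$.
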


\begin{proof}
Let $v \in \R^3$ be fixed. We split the integral for some $M \ge 1$:
\begin{align*}
&\int_{\R^3} |v-v_*|^{-\alpha} |\varphi(v_*)| \d v_* \\
&\qquad \le  \int_{|v-v_*| \ge M} |v-v_*|^{-\alpha} |\varphi(v_*)| \d v_* + \int_{|v-v_*| \le M} |v-v_*|^{-\alpha} |\varphi(v_*)| \d v_*\\
&\qquad =: I_1 + I_2.
\end{align*}
For $I_2$ we write $\la v_* \ra^{-\ell} \leq \la v-v_* \ra^{\ell} \la v \ra^{-\ell}$ for some arbitrary $\ell \geq 0$, thus
$$
I_2 \lesssim \la v \ra^{-\ell} \| \la v \ra^{\ell} \varphi \|_{L^\infty_v} \int_{|v-v_*| \le M} |v-v_*|^{-\alpha} \la v - v_* \ra^{\ell} \d v_* \lesssim \la v \ra^{-\ell}.
$$
For $I_1$ we take $p> 3/(3-\alpha)$, so that $p/(p-1) < 3/\alpha$, and apply H\"older's inequality to get
$$
\begin{aligned}
I_1 
&\le  \left( \int_{\R^3} |v-v_*|^{-\alpha \frac{p}{p-1}} \langle v_* \rangle^{-q \frac{p}{p-1}} \d v_* \right)^{\frac{p-1}{p}}
 \left( \int_{\R^3} \mathbf{1}_{|v-v*| \ge M} \la v_* \ra^{q p} |\varphi(v_*)|^p \d v_* \right)^{\frac{1}{p}} \\
&\lesssim \la v \ra^{-\alpha } \left( \int_{\R^3} \mathbf{1}_{|v-v*| \ge M} \, \la v_* \ra^{q p} |\varphi(v_*)|^p \d v_* \right)^{\frac{1}{p}},
\end{aligned}
$$
where $q>0$ is such that $q \frac{p}{p-1}>3$. We now observe that for $M \ge 2 |v|$, if $|v-v_*| \ge M$ then $|v_*| \ge |v-v_*| - |v| \ge M/2$, and thus we get 
$$
\begin{aligned}
\int_{\R^3} \mathbf{1}_{|v-v_*| \ge M} \, \la v_* \ra^{q p} |\varphi(v_*)|^p \d v_* 
&\lesssim \int_{\R^3} \mathbf{1}_{|v_*| \ge M/2} \, \la v_* \ra^{q p} |\varphi(v_*)|^p \d v_* \\
&\lesssim M^{-p} \int_{\R^3}  \la v_* \ra^{(q+1) p} |\varphi(v_*)|^p \d v_* \\
&\lesssim M^{-p} .
\end{aligned}
$$
We then conclude taking $M \ge \max(1,2 |v|)$ large enough.
\end{proof}

\begin{prop}
	\label{prop:change_of_variables}
	The following change of variables formulas hold:
	\begin{align}
		\label{eq:vstar_to_vprime}
		\int b(\cos \theta) |v-v_*|^\gamma  & f(v,v', \theta) \d \sigma \d v_*\\
		\notag
	 \approx \int & b \left(\cos (\pi - 2 \theta) \right) |v-v_*|^\gamma \sin \left( \frac{\pi}{2} - \theta \right)^{-2-\gamma} f \left(v,v_*, \pi - 2 \theta\right) \d \sigma \d v_*,
	\end{align}
	\begin{gather}
		\label{eq:v_to_vprime}
		\int b(\cos \theta) |v-v_*|^\gamma f (v',v_*, \theta) \d \sigma \d v
		\approx \int b(\cos 2 \theta) |v-v_*|^\gamma f \left(v,v_*, 2 \theta\right) \d \sigma \d v,\\
		\label{eq:pre_post}
		\int B(v-v_*, \sigma) f(v,v_*, v', v_*', \theta) \d \sigma \d v_* \d v = \int B(v-v_*, \sigma) f(v',v_*', v, v_*, \theta) \d \sigma \d v_* \d v.
	\end{gather}
\end{prop}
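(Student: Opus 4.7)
The plan is to prove all three identities by classical changes of variables exploiting the symmetries of elastic collisions. Formula \eqref{eq:pre_post} is the pre-post collisional involution, while \eqref{eq:vstar_to_vprime} and \eqref{eq:v_to_vprime} are ``regular'' change-of-variable formulas reparametrizing the integration by $v'$ rather than $v_*$ or $v$.

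For \eqref{eq:pre_post}, I would introduce the map $\Psi : (v, v_*, \sigma) \mapsto (v', v'_*, \sigma^*)$ with $\sigma^* := (v - v_*)/|v - v_*|$. Using $v + v_* = v' + v'_*$, $|v - v_*| = |v' - v'_*|$, and $(v' - v'_*)/|v' - v'_*| = \sigma$, one checks that $\Psi$ is an involution of $\R^3 \times \R^3 \times \S^2$ with unit Jacobian, and that the angle $\theta$ is preserved, so that $B(v - v_*, \sigma) = B(v' - v'_*, \sigma^*)$. Applying $\Psi$ to the left-hand side of \eqref{eq:pre_post} then directly yields the right-hand side.

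For \eqref{eq:vstar_to_vprime} and \eqref{eq:v_to_vprime}, I would fix one of the velocities ($v$ for the former, $v_*$ for the latter) together with $\sigma$, and substitute the remaining velocity by $v'$. Setting $u = v - v_*$ and working in spherical coordinates with polar axis $\sigma$, the identities
\begin{equation*}
v' - v_* = \tfrac{1}{2}(u + |u|\sigma), \qquad v' - v = \tfrac{1}{2}(|u|\sigma - u)
\end{equation*}
give $|v' - v_*| = |u|\cos(\theta/2)$ and $|v' - v| = |u|\sin(\theta/2)$, with the angles between these vectors and $\sigma$ equal respectively to $\theta/2$ and $\pi/2 - \theta/2$. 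A direct Jacobian computation in these coordinates then yields $\d v = (4/\cos^2(\theta/2))\, \d v'$ at fixed $(v_*, \sigma)$, and $\d v_* = (4/\sin^2(\theta/2))\, \d v'$ at fixed $(v, \sigma)$. Substituting and relabelling the new integration variable back to $v$ (resp.\ $v_*$), the old $\theta$ transforms into $2\theta_{\mathrm{new}}$ for \eqref{eq:v_to_vprime} and into $\pi - 2\theta_{\mathrm{new}}$ (modulo the evenness of cosine) for \eqref{eq:vstar_to_vprime}, yielding the arguments of $b$ and $f$ on the respective right-hand sides.

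Combining the Jacobian with the transformation of $|v - v_*|^\gamma$ leaves an extra weight $\cos^{-2-\gamma}(\theta/2)$ for \eqref{eq:v_to_vprime}, which is uniformly bounded on $\theta \in [0, \pi/2]$ and thus absorbed into ``$\approx$'', and $\sin^{-2-\gamma}(\theta/2) = \sin(\pi/2 - \theta_{\mathrm{new}})^{-2-\gamma}$ for \eqref{eq:vstar_to_vprime}, which is kept explicit since it genuinely degenerates as $\theta \to \pi/2$. The main obstacle is the careful book-keeping of angular conventions: the new angle produced by the substitution in \eqref{eq:vstar_to_vprime} naturally lies outside the paper's range $[0, \pi/2]$, so one must use the $\sigma \mapsto -\sigma$ symmetrization of $b$ (or equivalently the evenness of cosine) to rewrite the resulting integral in the form stated in the proposition.
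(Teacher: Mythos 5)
Your proposal is correct and follows essentially the same route as the paper: compute the Jacobians of $v \mapsto v'$ at fixed $(v_*,\sigma)$ and $v_* \mapsto v'$ at fixed $(v,\sigma)$, track how $|v-v_*|$ and the scattering angle transform, and invoke the involution $(v,v_*,\sigma)\mapsto(v',v'_*,(v-v_*)/|v-v_*|)$ for \eqref{eq:pre_post}. You are in fact slightly more careful than the paper's write-up about why the extra weight $\cos^{-2-\gamma}(\theta/2)$ can be absorbed into ``$\approx$'' in \eqref{eq:v_to_vprime} (the new angle stays in $[0,\pi/4]$) and about the angular convention requiring the symmetrization of $b$ in \eqref{eq:vstar_to_vprime}.
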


\begin{proof}
	The pre-post collisional change of variables \eqref{eq:pre_post} is known to be involutive with Jacobian $1$ and it is easy to check that $|v-v_*| = |v'-v_*'|$. We only deal with the first two change of variables. Recall the definition of $v'$:
	$$v' = \frac{v+v_*}{2} + \sigma \frac{|v-v_*|}{2}.$$
	Denote $\displaystyle k := \frac{v-v_*}{|v-v_*|}$ and recall that $\theta$ is the angle $(k, \sigma)$. The differentials of $v'$ with respect to $v$ and $v_*$ writes in the basis $(k, \sigma, w)$ where $w \perp k, \sigma$ (at least when $k$ and $\sigma$ are not colinear)
	\begin{gather}
		\frac{\d v'}{\d v} = \frac{1}{2}\left( \id + \la  \, \cdot \, , k \ra \sigma \right) = \frac{1}{2} \left(
		\begin{matrix}
			1 & 0 & 0\\
			1 & 1 + \cos \theta & 0\\
			0 & 0 & 1
		\end{matrix}
		\right),\\
		\frac{\d v'}{\d v_*} = \frac{1}{2}\left( \id - \la \, \cdot \, , k \ra \sigma \right) = \frac{1}{2} \left(
		\begin{matrix}
			1 & 0 & 0\\
			1 & 1 - \cos \theta & 0\\
			0 & 0 & 1
		\end{matrix}
		\right).
	\end{gather}
	Thus, the following identities hold:
	\begin{gather*}
		\left|\frac{\d v'}{\d v}\right| = \frac{1}{8} (1 + \cos \theta) = \frac{1}{4} \cos^2\left( \frac{\theta}{2} \right),\\
		\left|\frac{\d v'}{\d v_*}\right| = \frac{1}{8} (1 - \cos \theta) = \frac{1}{4} \sin^2\left( \frac{\theta}{2} \right).
	\end{gather*}
	Furthermore, the definition of $v'$ also implies
	\begin{gather*}
		|v'-v_*|^2 = \frac{1}{2}\cos\left( \frac{\theta}{2} \right)^2 |v-v_*|^2,\\
		|v'-v|^2 = \frac{1}{2}\sin\left( \frac{\theta}{2} \right)^2 |v-v_*|^2.
	\end{gather*}
	The angle $\varphi$ formed by $v'-v_*$ and $\sigma$ and the angle $\psi$ formed by $v'-v$ and $\sigma$ are related to $\theta$ by 
	\begin{gather*}
		\varphi = \frac{\theta}{2}, \quad \psi = \frac{\pi - \theta}{2},
	\end{gather*}
	thus the integrals are estimated as follows:
	
	\begin{align*}
		\int b(\cos \theta) |v-v_*|^\gamma  & f(v,v', \theta) \d \sigma \d v_*\\
		\notag
		\approx \int & b \left(\cos (\pi - 2 \psi) \right) |v-v|^\gamma \sin \left( \frac{\pi}{2} - \psi \right)^{-2-\gamma} f \left(v,v_*, \pi - 2 \psi\right) \d \sigma \d v',
	\end{align*}
	\begin{gather*}
		\int b(\cos \theta) |v-v_*|^\gamma f (v',v_*, \theta) \d \sigma \d v
		\approx \int b(\cos 2 \varphi) |v-v_*|^\gamma f \left(v',v_*, 2 \varphi\right) \d \sigma \d v'.
	\end{gather*}
	We conclude to \eqref{eq:v_to_vprime} and \eqref{eq:vstar_to_vprime} by renaming the integration variables.
\end{proof}

\begin{figure}[h]
	\begin{minipage}{.45\textwidth}

		\tikzset{every picture/.style={line width=0.75pt}} 
		
		\begin{tikzpicture}[x=0.75pt,y=0.75pt,yscale=-1,xscale=1]
			
			\draw  [draw opacity=0] (397.26,116.46) .. controls (397.31,116.46) and (397.37,116.46) .. (397.42,116.46) .. controls (401.91,116.51) and (405.82,119.06) .. (407.95,122.84) -- (397.26,129.48) -- cycle ; \draw  [color={rgb, 255:red, 0; green, 0; blue, 0 }  ,draw opacity=1 ] (397.26,116.46) .. controls (397.31,116.46) and (397.37,116.46) .. (397.42,116.46) .. controls (401.91,116.51) and (405.82,119.06) .. (407.95,122.84) ;  
			\draw    (344.85,159.67) -- (449.3,100.24) ;
			\draw  [draw opacity=0] (310.63,119.77) .. controls (312.54,122.82) and (313.61,126.42) .. (313.56,130.25) .. controls (313.52,134.02) and (312.39,137.52) .. (310.48,140.49) -- (293.32,130) -- cycle ; \draw  [color={rgb, 255:red, 200; green, 200; blue, 200 }  ,draw opacity=1 ] (310.63,119.77) .. controls (312.54,122.82) and (313.61,126.42) .. (313.56,130.25) .. controls (313.52,134.02) and (312.39,137.52) .. (310.48,140.49) ;  
			\draw  [color={rgb, 255:red, 200; green, 200; blue, 200 }  ,draw opacity=1 ] (173,130) .. controls (173,63.73) and (226.73,10) .. (293,10) .. controls (359.27,10) and (413,63.73) .. (413,130) .. controls (413,196.27) and (359.27,250) .. (293,250) .. controls (226.73,250) and (173,196.27) .. (173,130) -- cycle ;
			\draw  [color={rgb, 255:red, 200; green, 200; blue, 200 }  ,draw opacity=1 ][dash pattern={on 0.84pt off 2.51pt}] (189.63,70) -- (397,70) -- (397,190) -- (189.63,190) -- cycle ;
			\draw [color={rgb, 255:red, 200; green, 200; blue, 200 }  ,draw opacity=1 ]   (189.63,70) -- (397,190) ;
			\draw [color={rgb, 255:red, 200; green, 200; blue, 200 }  ,draw opacity=1 ]   (397,70) -- (189.63,190) ;
			\draw [color={rgb, 255:red, 255; green, 200; blue, 200 }  ,draw opacity=1 ][line width=1.5]    (293,130) -- (341.17,102.17) ;
			\draw [shift={(344.63,100.17)}, rotate = 149.98] [fill={rgb, 255:red, 255; green, 200; blue, 200 }  ,fill opacity=1 ][line width=0.08]  [draw opacity=0] (6.97,-3.35) -- (0,0) -- (6.97,3.35) -- cycle    ;
			\draw  [color={rgb, 255:red, 200; green, 200; blue, 200 }  ,draw opacity=1 ] (189.01,79.69) -- (200.32,79.69) -- (200.32,69.62) ;
			\draw [color={rgb, 255:red, 255; green, 200; blue, 200 }  ,draw opacity=1 ][line width=0.75]    (293.32,130) -- (340.49,157.25) ;
			\draw [shift={(343.08,158.75)}, rotate = 210.01] [fill={rgb, 255:red, 255; green, 200; blue, 200 }  ,fill opacity=1 ][line width=0.08]  [draw opacity=0] (5.36,-2.57) -- (0,0) -- (5.36,2.57) -- cycle    ;
			\draw   (337,130) .. controls (337,96.86) and (363.86,70) .. (397,70) .. controls (430.14,70) and (457,96.86) .. (457,130) .. controls (457,163.14) and (430.14,190) .. (397,190) .. controls (363.86,190) and (337,163.14) .. (337,130) -- cycle ;
			\draw    (397,70) -- (397,190) ;
			\draw [color={rgb, 255:red, 255; green, 0; blue, 0 }  ,draw opacity=1 ][line width=1.5]    (397,130) -- (428.49,111.81) ;
			\draw [shift={(431.95,109.8)}, rotate = 149.98] [fill={rgb, 255:red, 255; green, 0; blue, 0 }  ,fill opacity=1 ][line width=0.08]  [draw opacity=0] (6.97,-3.35) -- (0,0) -- (6.97,3.35) -- cycle    ;
			\draw  [dash pattern={on 0.84pt off 2.51pt}] (397,70) -- (449.3,100.24) -- (397,190) -- (344.85,159.67) -- cycle ;
			
			\draw (169,58.4) node [anchor=north west][inner sep=0.75pt]  [color={rgb, 255:red, 200; green, 200; blue, 200 }  ,opacity=1 ]  {$v_{*}$};
			\draw (399,193.4) node [anchor=north west][inner sep=0.75pt]  [color={rgb, 255:red, 0; green, 0; blue, 0 }  ,opacity=1 ]  {$v$};
			\draw (402,47.4) node [anchor=north west][inner sep=0.75pt]  [color={rgb, 255:red, 0; green, 0; blue, 0 }  ,opacity=1 ]  {$v'$};
			\draw (170,187.4) node [anchor=north west][inner sep=0.75pt]  [color={rgb, 255:red, 200; green, 200; blue, 200 }  ,opacity=1 ]  {$v'_{*}$};
			\draw (308,97.4) node [anchor=north west][inner sep=0.75pt]  [color={rgb, 255:red, 255; green, 200; blue, 200 }  ,opacity=1 ]  {$\sigma $};
			\draw (318.82,125.48) node [anchor=north west][inner sep=0.75pt]  [font=\scriptsize,color={rgb, 255:red, 200; green, 200; blue, 200 }  ,opacity=1 ]  {$\theta $};
			\draw (312.48,147.89) node [anchor=north west][inner sep=0.75pt]  [color={rgb, 255:red, 255; green, 200; blue, 200 }  ,opacity=1 ]  {$k$};
			\draw (416.48,123.3) node [anchor=north west][inner sep=0.75pt]  [color={rgb, 255:red, 255; green, 0; blue, 0 }  ,opacity=1 ]  {$\sigma $};
			\draw (401.82,96.48) node [anchor=north west][inner sep=0.75pt]  [font=\normalsize,color={rgb, 255:red, 0; green, 0; blue, 0 }  ,opacity=1 ]  {$\psi $};

		\end{tikzpicture}
	\end{minipage}
	\hspace{1cm}
	\begin{minipage}{.45\textwidth}

		\tikzset{every picture/.style={line width=0.75pt}} 
		
		\begin{tikzpicture}[x=0.75pt,y=0.75pt,yscale=-1,xscale=1]
			
			\draw  [draw opacity=0] (329.32,103.58) .. controls (331.21,106.61) and (332.28,110.18) .. (332.25,113.98) -- (312,113.81) -- cycle ; \draw  [color={rgb, 255:red, 0; green, 0; blue, 0 }  ,draw opacity=1 ] (329.32,103.58) .. controls (331.21,106.61) and (332.28,110.18) .. (332.25,113.98) ;  
			\draw  [draw opacity=0] (329.63,163.77) .. controls (331.54,166.82) and (332.61,170.42) .. (332.56,174.25) .. controls (332.52,178.02) and (331.39,181.52) .. (329.48,184.49) -- (312.32,174) -- cycle ; \draw  [color={rgb, 255:red, 200; green, 200; blue, 200 }  ,draw opacity=1 ] (329.63,163.77) .. controls (331.54,166.82) and (332.61,170.42) .. (332.56,174.25) .. controls (332.52,178.02) and (331.39,181.52) .. (329.48,184.49) ;  
			\draw  [color={rgb, 255:red, 200; green, 200; blue, 200 }  ,draw opacity=1 ] (192,174) .. controls (192,107.73) and (245.73,54) .. (312,54) .. controls (378.27,54) and (432,107.73) .. (432,174) .. controls (432,240.27) and (378.27,294) .. (312,294) .. controls (245.73,294) and (192,240.27) .. (192,174) -- cycle ;
			\draw  [color={rgb, 255:red, 200; green, 200; blue, 200 }  ,draw opacity=1 ][dash pattern={on 0.84pt off 2.51pt}] (208.63,114) -- (416,114) -- (416,234) -- (208.63,234) -- cycle ;
			\draw [color={rgb, 255:red, 200; green, 200; blue, 200 }  ,draw opacity=1 ]   (208.63,114) -- (416,234) ;
			\draw [color={rgb, 255:red, 200; green, 200; blue, 200 }  ,draw opacity=1 ]   (416,114) -- (208.63,234) ;
			\draw [color={rgb, 255:red, 255; green, 200; blue, 200 }  ,draw opacity=1 ][line width=1.5]    (312,174) -- (360.17,146.17) ;
			\draw [shift={(363.63,144.17)}, rotate = 149.98] [fill={rgb, 255:red, 255; green, 200; blue, 200 }  ,fill opacity=1 ][line width=0.08]  [draw opacity=0] (6.97,-3.35) -- (0,0) -- (6.97,3.35) -- cycle    ;
			\draw [color={rgb, 255:red, 255; green, 200; blue, 200 }  ,draw opacity=1 ][line width=0.75]    (312.32,174) -- (359.49,201.25) ;
			\draw [shift={(362.08,202.75)}, rotate = 210.01] [fill={rgb, 255:red, 255; green, 200; blue, 200 }  ,fill opacity=1 ][line width=0.08]  [draw opacity=0] (5.36,-2.57) -- (0,0) -- (5.36,2.57) -- cycle    ;
			\draw    (208.01,113.62) -- (416,114) ;
			\draw   (208.63,114) .. controls (208.63,56.56) and (255.19,10) .. (312.63,10) .. controls (370.06,10) and (416.62,56.56) .. (416.62,114) .. controls (416.62,171.44) and (370.06,218) .. (312.63,218) .. controls (255.19,218) and (208.63,171.44) .. (208.63,114) -- cycle ;
			\draw    (222.29,165.65) -- (401.72,61.98) ;
			\draw [color={rgb, 255:red, 255; green, 0; blue, 0 }  ,draw opacity=1 ][line width=1.5]    (312,113.81) -- (360.17,85.98) ;
			\draw [shift={(363.64,83.98)}, rotate = 149.98] [fill={rgb, 255:red, 255; green, 0; blue, 0 }  ,fill opacity=1 ][line width=0.08]  [draw opacity=0] (6.97,-3.35) -- (0,0) -- (6.97,3.35) -- cycle    ;
			\draw  [dash pattern={on 0.84pt off 2.51pt}] (401.72,61.98) -- (416,114) -- (222.29,165.65) -- (208.63,114) -- cycle ;
			
			\draw (188,102.4) node [anchor=north west][inner sep=0.75pt]  [color={rgb, 255:red, 0; green, 0; blue, 0 }  ,opacity=1 ]  {$v_{*}$};
			\draw (418,237.4) node [anchor=north west][inner sep=0.75pt]  [color={rgb, 255:red, 200; green, 200; blue, 200 }  ,opacity=1 ]  {$v$};
			\draw (421,91.4) node [anchor=north west][inner sep=0.75pt]  [color={rgb, 255:red, 0; green, 0; blue, 0 }  ,opacity=1 ]  {$v'$};
			\draw (189,231.4) node [anchor=north west][inner sep=0.75pt]  [color={rgb, 255:red, 200; green, 200; blue, 200 }  ,opacity=1 ]  {$v'_{*}$};
			\draw (327,141.4) node [anchor=north west][inner sep=0.75pt]  [color={rgb, 255:red, 255; green, 200; blue, 200 }  ,opacity=1 ]  {$\sigma $};
			\draw (337.82,169.48) node [anchor=north west][inner sep=0.75pt]  [font=\scriptsize,color={rgb, 255:red, 200; green, 200; blue, 200 }  ,opacity=1 ]  {$\theta $};
			\draw (331.48,191.89) node [anchor=north west][inner sep=0.75pt]  [color={rgb, 255:red, 255; green, 200; blue, 200 }  ,opacity=1 ]  {$k$};
			\draw (322,79.4) node [anchor=north west][inner sep=0.75pt]  [color={rgb, 255:red, 255; green, 0; blue, 0 }  ,opacity=1 ]  {$\sigma $};
			\draw (343.82,98.3) node [anchor=north west][inner sep=0.75pt]  [font=\scriptsize,color={rgb, 255:red, 0; green, 0; blue, 0 }  ,opacity=1 ]  {$\theta /2$};

		\end{tikzpicture}
		
	\end{minipage}
	\caption{The changes of variables $(v, v') \rightarrow (v, v_*)$ and $(v', v_*) \rightarrow (v, v_*)$.}
\end{figure}
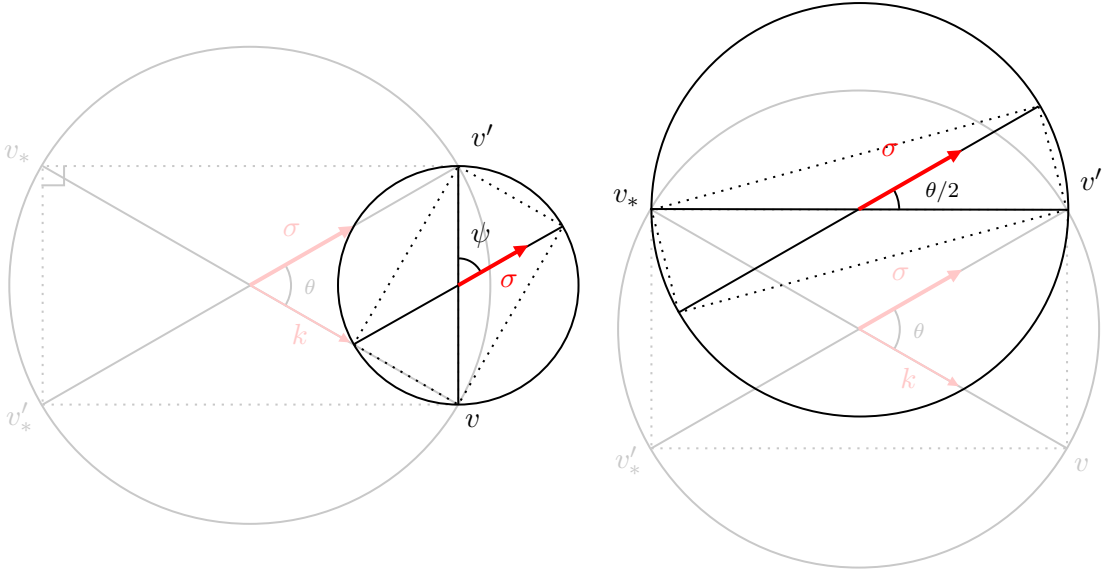

We state the so-called \textit{Cancellation lemma} from \cite[Lemma 1]{ADVW2000}.
\begin{prop}[Cancellation lemma]
	\label{prop:cancellation_lemma}
	The following cancellation formula holds:
	\begin{equation*}
		\int B(v-v_*, \sigma) (f' - f) \d v_* \d \sigma = (f * S)(v),
	\end{equation*}
	where the function $S=S(z)$ is defined as
	\begin{equation*}
		S(z) := 2 \pi \int_0^{\pi/2} \sin \theta \left( \frac{1}{\cos^3 \theta} B\left( \frac{|z|}{\cos (\theta/2)}, \cos \theta \right) - B(|z|, \cos \theta)\right) \d \theta.
	\end{equation*}
	In the particular case of the collision kernel $B_\delta(v-v_*, \sigma) = B(v-v_*, \sigma) \mathbf{1}_{| \theta | \le \delta}$, the corresponding function $S_\delta$ satisfies
	\begin{equation*}
		S_\delta(z) \approx \delta^{2 - 2s} |z|^\gamma.
	\end{equation*}
\end{prop}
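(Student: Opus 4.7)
The plan is to follow the classical argument from~\cite{ADVW2000}. The strategy is to rewrite the ``gain'' part $\int B(v-v_*,\sigma)f(v')\,d v_*\,d\sigma$ via the change of variables $v_*\mapsto v'$ (at $v,\sigma$ fixed), and recombine it with the ``loss'' part $f(v)\int B(v-v_*,\sigma)\,d v_*\,d\sigma$ to produce a convolution in $v$ against a radial kernel $S$. Since both integrals are individually divergent (because of the non-integrable angular singularity of $b$ at $\theta=0$ and the unbounded $v_*$-domain), I would first truncate $B_\varepsilon:=B\,\mathbf{1}_{\theta\ge\varepsilon}$ to make all integrals absolutely convergent, establish the identity for $B_\varepsilon$, and then pass to the limit $\varepsilon\to 0$.

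The main step is the change of variables. At fixed $v$ and $\sigma$, the map $v_*\mapsto v'=\tfrac{v+v_*}{2}+\tfrac{|v-v_*|}{2}\sigma$ is a smooth bijection from $\R^3\setminus\{v\}$ onto the open half-space $H_\sigma:=\{w:(w-v)\cdot\sigma>0\}$, with Jacobian $\tfrac14\sin^2(\theta/2)$ as computed in Proposition~\ref{prop:change_of_variables}. In the new variables, after relabelling so that $\theta$ denotes the angle between $\sigma$ and $\widehat{v-v'}$ (matching the convention used in the statement of $S$), one has $|v-v_*|=|v-v'|/\cos(\theta/2)$. Substituting this into the gain integrand, integrating $\sigma$ over the hemisphere in polar coordinates with axis $\widehat{v-v'}$ (the azimuthal integration contributes the factor $2\pi$ in front of $S$), and leaving a single integral over $\theta\in[\varepsilon,\pi/2]$, one obtains an expression of the form $\int f(v-z)\,K_{1,\varepsilon}(z)\,dz$. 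Treating the loss term similarly in the variable $z:=v-v_*$ gives $f(v)\int L_{\varepsilon}(z)\,dz$; since $L_\varepsilon$ is radial, after identifying $z=v-v'$ with $z=v-v_*$ as the common convolution variable the two pieces assemble into $(f\ast S_\varepsilon)(v)$, with $S_\varepsilon$ of the stated form but with $\theta$-integration restricted to $[\varepsilon,\pi/2]$.

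The cancellation that allows the limit $\varepsilon\to 0$ is that the bracket in the definition of $S$ vanishes like $O(\theta^2)$ as $\theta\to 0$ (Taylor expansion of the $\cos(\theta/2)$ and $\cos\theta$ factors near $1$); combined with $\sin\theta\,b(\cos\theta)\sim\theta^{-1-2s}$ from~\eqref{eq:noncutoff1}, the integrand in $S$ is of order $|z|^\gamma\theta^{1-2s}$, which is integrable on $(0,\pi/2)$ because $s\in(0,1)$. Dominated convergence then transfers the identity to the limit. For the particular truncation $B_\delta$ supported on $\{\theta\le\delta\}$, the same Taylor expansion gives directly $S_\delta(z)\approx|z|^\gamma\int_0^\delta\theta^{1-2s}\,d\theta\approx\delta^{2-2s}|z|^\gamma$, which is the stated bound. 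The main obstacle I anticipate is the bookkeeping: one must track two different conventions for the angle $\theta$ (before and after the change of variables), verify that the map $v_*\mapsto v'$ is indeed a bijection onto $H_\sigma$ with the claimed Jacobian, and check that the a priori divergent gain and loss contributions cancel \emph{exactly} to leave a finite convolution kernel $S_\varepsilon$ for each $\varepsilon>0$.
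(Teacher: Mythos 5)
The paper does not prove this proposition — it quotes \cite[Lemma 1]{ADVW2000} — so the only question is whether your reconstruction is sound, and it is not. The decisive problem is the final recombination: the loss term you use, $f(v)\int B\,dv_*\,d\sigma$, is a scalar multiple of $f(v)$, not an expression of the form $\int f(v-z)(\cdots)\,dz$; there is no ``common convolution variable'' to identify, so it cannot assemble with the gain into $(f*S_\varepsilon)(v)$. In fact the identity written with $f'-f$ is simply false: with $f\equiv 1$ the left-hand side vanishes, whereas $(1*S)=\int S>0$, since the $S$-bracket is strictly positive for every $\theta\in(0,\pi/2]$ (one checks $\theta\mapsto\cos^{-\gamma}(\theta/2)/\cos^3\theta$ is increasing from $1$). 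The change of variables is also internally inconsistent: for $v_*\mapsto v'$ the Jacobian $\tfrac14\sin^2(\theta/2)$ is as you say, but then $|v-v'|=|v-v_*|\sin(\theta/2)$, so $|v-v_*|=|v-v'|/\sin(\theta/2)$ and not $/\cos(\theta/2)$, and the angle between $\widehat{v-v'}$ and $\sigma$ equals $\pi/2+\theta/2$, which no relabelling places in $[0,\pi/2]$. The relations $|v-v_*|=|v-w|/\cos(\theta/2)$ and ``new polar angle $=\theta/2$'' belong to the \emph{other} map, $v_*\mapsto v'_*$, whose Jacobian is $\tfrac14\cos^2(\theta/2)$; you have spliced together properties of two different maps.

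Both issues trace to a misprint in the statement itself: in \cite[Lemma 1]{ADVW2000} the integrand is $f'_*-f_*$ (not $f'-f$) and the denominator in $S$ is $\cos^3(\theta/2)$ (not $\cos^3\theta$). This is also the form the paper actually applies later (to $\mu'_*-\mu_*$ in the proof of Lemma~\ref{lem:wdissipativity_L_grazing}, and to $g'_*-g_*$ in Proposition~\ref{prop:Q-L2v}). For $f'_*-f_*$ the gain is handled by $v_*\mapsto v'_*$: the distance shrinks by $\cos(\theta/2)$, the angle is halved, and the $\sigma$-integration in spherical coordinates about $\widehat{v-v'_*}$ produces the factor $\sin\theta/\cos^3(\theta/2)$; the loss $\int Bf(v_*)\,dv_*\,d\sigma$ is then genuinely a convolution in $z=v-v_*$, and the two kernels combine into $S$. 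Your concluding estimate $S_\delta(z)\approx\delta^{2-2s}|z|^\gamma$ — the $O(\theta^2)$ Taylor expansion of the bracket, combined with $\sin\theta\,b(\cos\theta)\sim\theta^{-1-2s}$ and integration over $(0,\delta)$ — is correct and, incidentally, insensitive to both misprints.
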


The following lemma is from \cite[Lemma 2.3]{AMUXY3}.
\begin{lem}
	Let $m = \la v \ra^k$ with $k \geq 0$, then there holds
	\begin{align}
		\label{eq:weight_split_1}
		m(v) & \lesssim m(v') + m(v'_*),\\
		\label{eq:weight_split_2}
		|m(v) - m(v')| & \lesssim \theta m(v') \la v'_* \ra + \theta^{k} m(v'_*),
	\end{align}
where the pre- and post-collisional velocities $(v',v'_*)$ and $(v,v_*)$ are defined in \eqref{eq:def-v'-v'*}.
\end{lem}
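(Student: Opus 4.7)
My plan is to reduce both estimates to the conservation laws of elastic collisions together with a mean-value argument, with a case split for \eqref{eq:weight_split_2}.

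For \eqref{eq:weight_split_1}, I will use energy conservation: the definitions \eqref{eq:def-v'-v'*} imply $|v|^2 + |v_*|^2 = |v'|^2 + |v'_*|^2$, so in particular $\la v \ra^2 = 1 + |v|^2 \le 1 + |v'|^2 + |v'_*|^2 \le \la v' \ra^2 + \la v'_* \ra^2$. Raising to the power $k/2 \ge 0$ and applying the elementary inequality $(A+B)^{k/2} \lesssim A^{k/2} + B^{k/2}$ for $A, B \ge 0$ then yields $m(v) \lesssim m(v') + m(v'_*)$.

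For \eqref{eq:weight_split_2}, the geometric starting point is the identity $|v - v'| = |v-v_*| \sin(\theta/2)$ (immediate from \eqref{eq:def-v'-v'*}), combined with $|v-v_*| = |v'-v'_*| \le \la v'\ra + \la v'_*\ra$ and $\sin(\theta/2) \lesssim \theta$ on $[0,\pi/2]$, which together give
\[ |v - v'| \lesssim \theta (\la v'\ra + \la v'_* \ra). \]
I will then split into two regimes. In the case $|v-v'| \le \la v'\ra/2$, the segment $w(t) := v' + t(v-v')$ satisfies $\la w(t)\ra \approx \la v'\ra$ uniformly for $t \in [0,1]$; the mean value theorem with $|\nabla m(w)| \lesssim \la w \ra^{k-1}$ then gives
\[ |m(v) - m(v')| \lesssim \la v'\ra^{k-1}|v-v'| \lesssim \theta \la v'\ra^{k-1}(\la v'\ra + \la v'_*\ra) \lesssim \theta\, m(v')\,\la v'_*\ra, \]
where I use $\la v'\ra^{k-1} \le \la v'\ra^k = m(v')$ and $\la v'_*\ra \ge 1$. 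In the opposite case $|v-v'| > \la v'\ra/2$, combining with the bound on $|v-v'|$ forces $\la v'\ra \lesssim \theta \la v'_*\ra$, which in turn gives $\theta\la v'_*\ra \gtrsim 1$ (since $\la v'\ra \ge 1$). Then $m(v') \le (\theta\la v'_*\ra)^k = \theta^k m(v'_*)$, and $|v| \le |v'|+|v-v'| \lesssim \theta\la v'_*\ra$ combined with $\theta\la v'_*\ra \gtrsim 1$ yields $m(v) \lesssim \theta^k m(v'_*)$; the trivial bound $|m(v) - m(v')| \le m(v) + m(v')$ then produces the second term on the right-hand side.

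The main delicate point will be the mean-value estimate in the first case when $k \in (0,1)$: the bound $|\nabla m(w)| \lesssim \la w\ra^{k-1}$ is now decreasing in $\la w\ra$, so controlling it by $\la v'\ra^{k-1}$ requires a uniform \emph{lower} bound $\la w(t)\ra \gtrsim \la v'\ra$ along the segment, which is precisely what the assumption $|v-v'| \le \la v'\ra/2$ delivers (after separately handling the bounded regime $|v'| \le 1$, where everything is controlled by absolute constants). The rest of the argument is essentially bookkeeping, including the observation that when $\theta$ is bounded below (say $\theta \gtrsim 1$) one can simply invoke \eqref{eq:weight_split_1} to bound $|m(v)-m(v')|$ by $m(v') + m(v'_*) \lesssim \theta m(v')\la v'_*\ra + \theta^k m(v'_*)$.
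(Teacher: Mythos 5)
The paper does not prove this lemma; it is quoted verbatim with the attribution ``The following lemma is from [AMUXY3, Lemma 2.3]'' and no argument is given in the text. There is therefore no internal proof to compare yours against, and the relevant question is simply whether your proposal is correct.

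It is. For \eqref{eq:weight_split_1}, energy conservation gives $\la v\ra^2 \le \la v'\ra^2 + \la v'_*\ra^2$, and raising to the power $k/2\ge 0$ is handled by subadditivity of $t\mapsto t^{k/2}$ when $k/2\le 1$ and by convexity when $k/2\ge 1$, so the claimed $k$-dependent constant is justified. For \eqref{eq:weight_split_2}, the geometric input $|v-v'|=|v-v_*|\sin(\theta/2)=|v'-v'_*|\sin(\theta/2)\le\theta(\la v'\ra+\la v'_*\ra)$ is correct, and the subsequent case analysis is sound. Two points are worth making explicit when writing this up. First, the implication ``$|v-v'|>\la v'\ra/2$ forces $\la v'\ra\lesssim\theta\la v'_*\ra$'' is only valid once $\theta$ is below an absolute threshold (say $\theta\le 1/4$), which you do cover by treating $\theta\gtrsim 1$ separately via \eqref{eq:weight_split_1}; the logical order should be: fix $\theta_0$, dispose of $\theta\ge\theta_0$ first, and only then perform the $|v-v'|\lessgtr\la v'\ra/2$ dichotomy. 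Second, in the mean-value step with $k\in(0,1)$ you correctly flag that $|\nabla m|\lesssim\la\cdot\ra^{k-1}$ is decreasing, so you need the two-sided comparison $\la w(t)\ra\approx\la v'\ra$ along the whole segment; this does follow from the $1$-Lipschitz property of $v\mapsto\la v\ra$ together with $|v-v'|\le\la v'\ra/2$, but it deserves a line of justification rather than being relegated to commentary. With these cosmetic clarifications, the argument is a complete and self-contained proof of the cited inequalities.
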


This lemma will serve to remove the kinetic singularity $|v-v_*|^\gamma$ in some integrals involving the collision kernel $B(v-v_*, \sigma)$.
\begin{lem}
	\label{lem:remove_kin_sing}
	For any $a > -3$,  $b \in \R$, $q > 3 + 2 s + a+b$ and any smooth enough function $f$ there holds
	\begin{align*}
		\int b(\cos \theta) |v - v_*|^{a} \la v - v_* \ra^{b} \la v_* \ra^{-q} f(v, v') \d \sigma \d v_* \d v &\\
		\approx \int b(\cos \theta) \la v - v_* \ra^{a + b} \la v_* \ra^{-q} f(v, v') \d \sigma \d v_* \d v
	\end{align*}	
\end{lem}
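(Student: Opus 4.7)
The plan is to decompose the integration domain according to whether $|v-v_*| \geq 1$ or $|v-v_*| < 1$ and compare the two integrals on each region separately.

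On the far-field region $\{|v-v_*| \geq 1\}$, the elementary equivalence $|v-v_*| \approx \la v-v_*\ra$ yields the pointwise relation $|v-v_*|^a \la v-v_*\ra^b \approx \la v-v_*\ra^{a+b}$ with constants depending only on $a$. Integrating against the common nonnegative factor $b(\cos\theta)\la v_*\ra^{-q}|f(v,v')|$ gives the equivalence of the far-field portions of the two sides directly.

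On the near-field region $\{|v-v_*| < 1\}$, both $\la v-v_*\ra^b$ and $\la v-v_*\ra^{a+b}$ are equivalent to $1$, so it remains to establish
\begin{equation*}
\int_{|v-v_*|<1} b(\cos\theta)\,|v-v_*|^a \la v_*\ra^{-q} f(v,v')\,\d\sigma\,\d v_*\,\d v \approx \int_{|v-v_*|<1} b(\cos\theta)\,\la v_*\ra^{-q} f(v,v')\,\d\sigma\,\d v_*\,\d v.
\end{equation*}
For $a \geq 0$, the pointwise bound $|v-v_*|^a \leq 1$ on this region yields one direction immediately, and for $a \leq 0$, the reverse bound $|v-v_*|^a \geq 1$ yields the other.

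The remaining direction --- namely bounding the singular integral by the regular one when $a < 0$ --- is the main obstacle. To treat it, I would apply the change of variables $v_* \mapsto v'$ at fixed $v,\sigma$ (cf.\ Proposition \ref{prop:change_of_variables}), which gives the Jacobian $\d v_* = 4\sin^{-2}(\theta/2)\d v'$ together with the identity $|v-v_*| = \sqrt{2}\sin^{-1}(\theta/2)|v-v'|$. This transfers the singularity $|v-v_*|^a$ into the combination $|v-v'|^a \sin^{-a-2}(\theta/2)$; the $v'$-integration is locally integrable thanks to $a > -3$, and the induced angular factor combines with $b(\cos\theta) \approx \theta^{-2-2s}$ to produce an integral that, once paired with the decay $\la v_*\ra^{-q}$ and the quantitative hypothesis $q > 3 + 2s + a + b$, is bounded by the regular integral on the right-hand side. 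The main difficulty lies precisely in this last step: balancing the worsened angular singularity introduced by the Jacobian against the available polynomial decay in $v_*$, which is exactly what the weight threshold $q > 3 + 2s + a + b$ is designed to provide.
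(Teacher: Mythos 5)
Your far-field argument (on $|v-v_*|\geq 1$) is fine, and the one-sided pointwise bounds in the near-field region are also correct. The problem is in the last step, and you have in fact put your finger on exactly the place where the argument breaks down.

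Under the change of variables $v_*\mapsto v'$ at fixed $v,\sigma$, the factor $|v-v_*|^a\,\d v_*$ becomes $4\cdot 2^{a/2}\,|v-v'|^a\,\sin^{-a-2}(\theta/2)\,\d v'$. Combined with $b(\cos\theta)\approx\theta^{-2-2s}$ and the surface measure $\d\sigma\approx\theta\,\d\theta\,\d\phi$, the angular integral near $\theta=0$ behaves like $\int_0\theta^{-3-2s-a}\,\d\theta$, which is divergent whenever $a>-2-2s$; since the near field is the issue precisely when $a\in(-3,0)$, this covers a substantial part of the allowed range (indeed, in the applications in the paper $a=2\gamma$ with $\gamma+2s<0$, so $a>-2-2s$ is quite possible). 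Crucially, the weight $\la v_*\ra^{-q}$ does not rescue this: as $\theta\to0$ with $v,v'$ fixed, $v_*\to v'$, so $\la v_*\ra^{-q}$ tends to the finite nonzero value $\la v'\ra^{-q}$ and does nothing against the angular blow-up. The hypothesis $q>3+2s+a+b$ is there to ensure convergence of the integral in $v_*$ at \emph{infinity}, not to tame the grazing-collision singularity, so attributing that role to it is a misreading of what the condition buys.

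The paper sidesteps this entirely by passing to Carleman's representation: it writes the integral in variables $(v,h,y)$ with $h=v'-v$, $y=v_*-v$, $y\perp h$, $|y|\geq|h|$, so that the angular singularity is isolated in a factor $|h|^{-1-2s}$ that appears \emph{identically} on both sides. The whole comparison then reduces to showing, for each fixed $(v,h)$, the equivalence of two already-convergent $y$-integrals $K_{\mathrm{sing}}(v,h)$ and $K_{\mathrm{reg}}(v,h)$, which is done by the clean split $|y|\lessgtr\eps$ using $\la v+y\ra^{-q}\approx\la v\ra^{-q}$ on $\{|y|\leq\eps\}$. Your split in $|v-v_*|$ is in spirit the same split, but in the original $(v,v_*,\sigma)$ parametrization the radial variable $|v-v_*|$ is not decoupled from the angular one, and your proposed change of variables entangles them further rather than separating them. (A secondary remark: even in the near field you only treat one direction of the equivalence; for $a>0$ the bound of the regular integral by the singular one is also not addressed, though this is not where the applications in the paper lie.)
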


\begin{proof}
	As we cannot simply control $|v-v_*|^{ a } \la v - v_* \ra^{b}$ by $\la v - v_* \ra^{a+b}$, we resort to using Carleman's representation:
	\begin{align*}
		\int b(\cos \theta) & |v-v_*|^{a} \la v-v_* \ra^{b} \la v_* \ra^{-q} f(v,v') \d \sigma \d v \d v_*\\ &\approx \int_{\substack{v, h \in \R^3,\\ y \perp h, ~ |y| \ge |h|}} \frac{|y|^{1+2 s + a}}{|h|^{1+2s}} \la y \ra^{b} \la v + y \ra^{-q} f(v,v+h) \d y \d h \d v,\\
		& = \int K_{\text{sing}}(v, h) \frac{f(v,v+h)}{|h|^{1+2s}} \d h \d v,
	\end{align*}
	where we denoted the singular $y$-integral, which is well-defined because $1 + 2 s + a+b - q < -2$ and $1 + 2 s + a > -2$
	\begin{equation*}
		K_{\text{sing}} (v, h) := \int_{\substack{y \perp h,\\ |y| \ge |h|} } |y|^{1+2 s + a} \la y \ra^{b} \la v + y \ra^{-q}  \d y
	\end{equation*}
	and aim to prove
	\begin{equation*}
		K_{\text{sing}} (v, h) \approx K_{\text{reg}}(v, h) := \int_{\substack{y \perp h,\\ |y| \ge |h|} } |y|^{1+2 s} \la y \ra^{a+b} \la v + y \ra^{-q} \d y.
	\end{equation*}
	To do so, we split $K_{\text{sing}}$ for $|y| \le \eps$ and $|y| \ge \eps$ where $\eps > 0$ will be chosen later:
	\begin{align*}
		K_{\text{sing}} (v, h) =& \int_{\substack{y \perp h,\\ \eps \ge |y| \ge |h|} } |y|^{1+2 s + a} \la y \ra^{b} \la v + y \ra^{-q} \d y\\
		& + \int_{\substack{y \perp h,\\ |y| \ge \max\{|h|, \eps\}} } |y|^{1+2 s + a} \la y \ra^{b} \la v + y \ra^{-q} \d y.
	\end{align*}
	Concerning the first part, if $\eps$ is small enough, the assumption $|y| \le \eps$ implies $\la v + y \ra^{-q} \approx \la v \ra^{-q}$ uniformly in $y$. Concerning the second part, the assumption $|y| \ge \eps$ implies $\la y \ra \approx |y|$ uniformly in $y$. Thus we have
	\begin{align*}
		K_{\text{sing}} (v, h) =& \int_{\substack{y \perp h,\\ \eps \ge |y| \ge |h|} } |y|^{1+2 s + a} \la y \ra^{b} \la v + y \ra^{-q} \d y + \int_{\substack{y \perp h,\\ |y| \ge \max\{|h|, \eps\}} } |y|^{1+2 s + a} \la y \ra^{b} \la v + y \ra^{-q} \d y\\
		\approx & \la v \ra^{-q} \int_{\substack{y \perp h,\\ \eps \ge |y| \ge |h|} } |y|^{1+2 s + a} \la y \ra^{b} \d y + \int_{\substack{y \perp h,\\ |y| \ge \max\{|h|, \eps\}} } |y|^{1+2 s} \la y \ra^{a+b} \la v + y \ra^{-q} \d y\\
		\approx & \la v \ra^{-q} + \int_{\substack{y \perp h,\\ |y| \ge \max\{|h|, \eps\}} } |y|^{1+2 s} \la y \ra^{a+b} \la v + y \ra^{-q} \d y
	\end{align*}
	With the same reasoning, we have
	\begin{align*}
		K_{\text{reg}} (v, h) =& \int_{\substack{y \perp h,\\ \eps \ge |y| \ge |h|} } |y|^{1+2 s} \la y \ra^{a+b} \la v + y \ra^{-q} \d y + \int_{\substack{y \perp h,\\ |y| \ge \max\{|h|, \eps\}} } |y|^{1+2 s} \la y \ra^{a+b} \la v + y \ra^{-q} \d y\\
		\approx & \la v \ra^{-q} \int_{\substack{y \perp h,\\ \eps \ge |y| \ge |h|} } |y|^{1+2 s} \la y \ra^{a+b} \d y + \int_{\substack{y \perp h,\\ |y| \ge \max\{|h|, \eps\}} } |y|^{1+2 s} \la y \ra^{a+b} \la v + y \ra^{-q} \d y\\
		\approx & \la v \ra^{-q} + \int_{\substack{y \perp h,\\ |y| \ge \max\{|h|, \eps\}} } |y|^{1+2 s} \la y \ra^{a+b} \la v + y \ra^{-q} \d y.
	\end{align*}
	We conclude that $K_{\text{sing}} \approx K_{\text{reg}}$, which concludes this step.
\end{proof}

\begin{lem}
	\label{lem:prop_anisotropic_norm}
	For any $\gamma \in (-\infty, 2]$ and $s \in (0, 1)$, the anisotropic norm defined as
	$$\| f \|_{H^{s, *}_v(m)}^2 = \| \la v \ra^{\gamma / 2} f \|_{L^2_v(m)}^2 + \int b(\cos \theta) (\mu \la v \ra^{-\gamma})_* (\FF' - \FF)^2 \d \sigma \d v_* \d v,$$
	where we denoted $\FF = m \la v \ra^{\gamma / 2} f$, is equivalent to the following norms involving a weight function $\varphi \in L^1_v \left( \la v \ra \right) \cap L \log L$:
	\begin{enumerate}
		\item $\displaystyle \| \la v \ra^{\gamma / 2} f \|_{L^2_v(m)}^2 + \int b(\cos \theta) \varphi_* (\FF' - \FF)^2 \d \sigma \d v_* \d v$, if also $\varphi \in L^1_v\left( \la v \ra^{2 s} \right)$,
		\item $\displaystyle \| \la v \ra^{\gamma / 2} f \|_{L^2_v(m)}^2 + \int b(\cos \theta) \varphi_* \la v \ra^{\gamma} (F' - F)^2 \d \sigma \d v_* \d v$, if also $\varphi \in L^1_v\left( \la v \ra^{4 - \gamma} \right)$,
		\item $\displaystyle \| \la v \ra^{\gamma / 2} f \|_{L^2_v(m)}^2 + \int b(\cos \theta) \la v - v_* \ra^{\gamma} \varphi_* (F' - F)^2 \d \sigma \d v_* \d v$, if also $\varphi \in L^1_v\left( \la v \ra^{4 - \gamma + | \gamma|} \right)$,
	\end{enumerate}
	where we denoted $F = mf$. Furthermore, the anisotropic norm can be compared to isotropic Sobolev norms as follows:
	\begin{equation}
		\label{eq:aniso_iso_comp_poly}
		\| \la v \ra^{\gamma / 2} f \|^2_{H^{s}_v(m) } \lesssim \| f \|_{H^{s, *}_v(m) }^2 \lesssim  \| \la v \ra^{\gamma / 2+2s } f \|^2_{H^{s}_v(m) },
	\end{equation}
	and we have the general bound
	\begin{equation}
		\label{eq:bound_int_b_g_F}
		\int b(\cos \theta) \la v \ra^\gamma \varphi_* (F' - F)^2 \d \sigma \d v_* \d v \lesssim \| \la v \ra^{4 - \gamma} \varphi \|_{L^1_v} \| f \|_{H^{s, *}_v(m)}^2.
	\end{equation}
\end{lem}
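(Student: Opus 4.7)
\noindent\emph{Proof proposal.}
The plan is to base all manipulations on the algebraic identity
\begin{equation*}
  \FF' - \FF \;=\; \la v\ra^{\gamma/2}(F' - F) \,+\, \bigl(\la v'\ra^{\gamma/2} - \la v\ra^{\gamma/2}\bigr) F',
\end{equation*}
its counterpart obtained by exchanging the roles of $F$ and $\FF$, and a pointwise bound on $|\la v'\ra^{\gamma/2} - \la v\ra^{\gamma/2}|$ of the type of \eqref{eq:weight_split_2} (which is readily adapted to the possibly negative exponent $\gamma/2$). The three equivalences claimed in the lemma then amount to moving the factor $\la v\ra^{\gamma/2}$ in or out of the difference: inserting the identity above into $\int b\,\varphi_*(\FF'-\FF)^2$, or its inverse into $\int b\la v\ra^\gamma\varphi_*(F'-F)^2$, the ``weight-difference'' residuals are controlled, after the changes of variables of Proposition~\ref{prop:change_of_variables} and the singular integrability $\sin\theta\,b(\cos\theta)\lesssim\theta^{-1-2s}$, by $\|\la v\ra^{\gamma/2}f\|^2_{L^2_v(m)}$ multiplied by a suitable $L^1$-moment of $\varphi$; the successive moment exponents in~(1)--(3) match exactly the contributions of these residues. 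The passage from the factor $\la v\ra^\gamma$ in form~(2) to $\la v-v_*\ra^\gamma$ in form~(3) uses only the elementary pointwise inequality $\la v\ra^\gamma \lesssim \la v-v_*\ra^\gamma\la v_*\ra^{|\gamma|}$ (and conversely), which accounts for the extra $|\gamma|$ moment on $\varphi$.

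\smallskip

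That the specific Gaussian weight $\mu_*\la v_*\ra^{-\gamma}$ in the definition of $\|\cdot\|_{H^{s,*}_v(m)}$ may be replaced by any $\varphi_*$ with the claimed integrability rests on the classical two-sided estimate
\begin{equation*}
  c\,\|G\|^2_{\dot H^s_v} - C\,\|G\|^2_{L^2_v} \;\le\; \int b(\cos\theta)\,\varphi_*\,(G'-G)^2 \,\d\sigma\d v_*\d v \;\le\; C\,\|\varphi\|_{L^1_v(\la v\ra^{2s})}\,\|G\|^2_{H^s_v},
\end{equation*}
valid for any $\varphi\ge 0$ in $L^1_v(\la v\ra^{2s})\cap L\log L$, as established in the non-cutoff literature \cite{ADVW2000, AMUXY2011_1}. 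Applied with $G=\FF$ and either the original weight or a general~$\varphi$, both integrals differ from $\|\FF\|^2_{\dot H^s_v}$ only by terms absorbed into $\|\la v\ra^{\gamma/2}f\|^2_{L^2_v(m)}$, whence the equivalence. The isotropic--anisotropic comparison \eqref{eq:aniso_iso_comp_poly} is then read off directly from the same two-sided estimate, the extra $\la v\ra^{2s}$ weight appearing in the upper bound coming from the $L^1_v(\la v\ra^{2s})$-moment computed against the natural pointwise weight of $\FF$.

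\smallskip

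Finally, the general bound \eqref{eq:bound_int_b_g_F} follows from form~(2) and the upper inequality above: one rewrites $\la v\ra^\gamma(F'-F)^2$ in terms of $(\FF'-\FF)^2$ plus absorbable residuals using the first step, then applies the $L^1_v(\la v\ra^{2s})$-upper-bound to produce the constant $\|\la v\ra^{4-\gamma}\varphi\|_{L^1_v}$, the exponent $4-\gamma$ combining the $\la v\ra^{2s}$-moment from the bound with the $\la v\ra^{4-\gamma-2s}$-cost of transporting the $\la v\ra^{\gamma/2}$ weights through the identity.

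\smallskip

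The main obstacle is the lower-bound coercivity inequality in the two-sided estimate above, which is a non-trivial Fourier-/microlocal-analytic fact from the non-cutoff literature; I shall quote it directly rather than reproving it. The remaining steps are careful but routine bookkeeping combining \eqref{eq:weight_split_1}--\eqref{eq:weight_split_2}, Proposition~\ref{prop:change_of_variables}, and the pointwise weight comparisons mentioned above.
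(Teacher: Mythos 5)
There is a genuine gap in your second paragraph. You claim that the replaceability of the weight $\mu_*\la v_*\ra^{-\gamma}$ by a general $\varphi_*$ follows from the two-sided sandwich
\begin{equation*}
  c\,\|G\|^2_{\dot H^s_v} - C\,\|G\|^2_{L^2_v} \;\le\; \int b\,\varphi_*\,(G'-G)^2 \;\le\; C\,\|\varphi\|_{L^1_v(\la v\ra^{2s})}\,\|G\|^2_{H^s_v},
\end{equation*}
and that ``both integrals differ from $\|\FF\|^2_{\dot H^s_v}$ only by absorbable terms.'' This cannot be correct: if it were, the anisotropic integral would be equivalent to the isotropic $\|\FF\|^2_{H^s_v}$, and \eqref{eq:aniso_iso_comp_poly} itself would collapse to an equality. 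In reality the lower bound of your sandwich (an $\dot H^s_v$ norm) and the upper bound (an $H^s_v$ norm with a \emph{gain} of roughly $\la v\ra^{2s}$, as in \cite[Lemma~2.8]{AMUXY2011_1}) are genuinely different quantities straddling the anisotropic norm; the sandwich therefore does not pin the $\varphi$-weighted integral to the $\mu\la v\ra^{-\gamma}$-weighted one. To compare different weights in the anisotropic singular integral one needs the much more precise \emph{equality modulo a Sobolev-controllable error},
\begin{equation*}
  \int b(\cos\theta)\,\varphi_*\,(\FF'-\FF)^2 \,\d\sigma\d v_*\d v
  \;=\; \frac{\|\varphi\|_{L^1_v}}{\|\mu\la v\ra^{-\gamma}\|_{L^1_v}}\,\|f\|^2_{\dot H^{s,*}_v(m)}
  \;+\; \OO\!\left(\|\varphi\|_{L^1_v(\la v\ra^{2s})}\,\|\FF\|^2_{H^s_v}\right),
\end{equation*}
which is \eqref{eq:anisotrop_homog_different_weight} in the paper (from \cite[Lemma~2.3-(ii)]{HTT2020}). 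This identity is the crucial missing ingredient: it says the leading anisotropic part depends on $\varphi$ only through $\|\varphi\|_{L^1_v}$, and once it is interpolated with the ADVW coercivity the equivalence of form (1) with the definition follows, together with \eqref{eq:aniso_iso_comp_poly}. Without it, only a one-sided comparison survives.

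As a secondary remark, for the weight-difference residual you invoke a version of \eqref{eq:weight_split_2} ``adapted to the possibly negative exponent $\gamma/2$.'' That bound is stated for nonnegative exponents and its form ($\theta^k m(v'_*)$ term) does not extend cleanly to $k<0$; the correct and directly applicable estimate the paper uses is $|\la v\ra^{\alpha}-\la v'\ra^{\alpha}|\lesssim \sin(\theta/2)\,\la v'\ra^{\alpha}\la v_*\ra^{2-\alpha}$, valid for all $\alpha\le 1$ \cite[eq.~(2.7)]{HTT2020}. Apart from this, steps (1)$\approx$(2) and (2)$\approx$(3) follow the same polarization and pointwise weight-comparison argument as in the paper.

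Please revise the second step to rely on the quantitative HTT identity rather than the two-sided sandwich; the rest of your outline is consistent with the paper's strategy.
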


\begin{proof}
	We first establish the comparison \eqref{eq:aniso_iso_comp_poly} followed by the equivalence $\| f \|^2_{H^{s, *}_v(m) } \approx (1)$, then proceed to show $(1) \approx (2)$ and $(2) \approx (3)$.
	
	\step{1}{Proof of \eqref{eq:aniso_iso_comp_poly} and $\| f \|^2_{H^{s, *}_v(m) } \approx (1)$}
	From the proof of \cite[Lemma 2.7, estimate of $A$]{AMUXY2011_1}, for some constant $c_\psi > 0$ depending on $\| \psi \|_{L \log L}$ and~$\| \psi \|_{L^1_v(\la v \ra)}$, we have
	\begin{equation}
		\label{eq:dissipation_Sobolev_estimate}
		\| \FF \|_{L^2_v}^2 + \int b(\cos \theta) \psi_* (\FF' - \FF)^2 \d \sigma \d v_* \d v \ge c_\psi \| \FF \|_{H^s_v}^2,
	\end{equation}
	and from the proof of \cite[Lemma 2.8]{AMUXY2011_1}, we also have
	$$\int b(\cos \theta) \left(\mu \la v \ra^{-\gamma}\right)_* (\FF' - \FF)^2 \d \sigma \d v_* \d v \lesssim \| \la v \ra^{2 s} \FF \|_{H^s_v}^2.$$
	The comparison \eqref{eq:aniso_iso_comp_poly} follows from these two bounds with $\psi = \mu \la v \ra^{-\gamma}$. Furthermore, it was established in the proof of \cite[Lemma 2.3-\textit{(ii)}, estimate of $I^\delta_{11}$]{HTT2020} that
	\begin{align}
		\notag
		\int b(\cos \theta) \varphi_* (\FF' - \FF)^2 \d \sigma \d v_* \d v = & \frac{\| \varphi \|_ {L^1_v} }{ \| \mu \la v \ra^{-\gamma} \|_{L^1_v} } \| f \|^2_{\dot{H}^{s, *}_v(m) } \\
		\label{eq:anisotrop_homog_different_weight}
		& + \OO\Big( \| \varphi \|_{L^1_v \left( \la v \ra^{2s} \right) } \| \FF \|_{H^{s}_v }^2 \Big).
	\end{align}
	Thus, interpolating this estimate with \eqref{eq:dissipation_Sobolev_estimate} with $\psi = \varphi$ as to absorb the $\OO$ term, we deduce $\| f \|^2_{H^{s, *}_v(m) } \approx (1)$.

	\step{2}{Proof of $(1) \approx (2)$ and \eqref{eq:bound_int_b_g_F}}
	On the one hand, we have
	\begin{align*}
		\int b(\cos \theta) \varphi_* \la v \ra^{\gamma} & (F' - F)^2 \d \sigma \d v_* \d v \\
		= & \int b(\cos \theta) \varphi_* (\FF' - \FF)^2 \d \sigma \d v_* \d v \\
		& + \int b(\cos \theta) \varphi_* (F')^2 \Big(\la v' \ra^{\gamma / 2} - \la v \ra^{\gamma/2} \Big)^2 \d \sigma \d v_* \d v \\
		& + 2 \int b(\cos \theta) \varphi_* F' ( \FF' - \FF) \Big(\la v' \ra^{\gamma / 2} - \la v \ra^{\gamma/2} \Big) \d \sigma \d v_* \d v,
	\end{align*}
	thus, using Young's inequality, we have
	\begin{align*}
		\frac{1}{2} \int b(\cos \theta) & \varphi_* (\FF' - \FF)^2 \d \sigma \d v_* \d v - \int b(\cos \theta) \varphi_* (F')^2 \Big(\la v' \ra^{\gamma / 2} - \la v \ra^{\gamma/2} \Big)^2 \d \sigma \d v_* \d v  \\
		\le & \int b(\cos \theta) \varphi_* \la v \ra^\gamma (F'-F)^2 \d \sigma \d v_* \d v \\
		\le 2 \int b(\cos \theta) & \varphi_* (\FF' - \FF)^2 \d \sigma \d v_* \d v + 2 \int b(\cos \theta) \varphi_* (F')^2 \Big(\la v' \ra^{\gamma / 2} - \la v \ra^{\gamma/2} \Big)^2 \d \sigma \d v_* \d v.
	\end{align*}
	Next, the inequality \cite[(2.7)]{HTT2020} valid for any $\alpha \le 1$:
	\begin{equation*}
		|\la v \ra^{\alpha} - \la v' \ra^{\alpha}| \lesssim \sin \left(\tfrac{\theta}{2}\right) \la v' \ra^{\alpha} \la v_* \ra^{2-\alpha},
	\end{equation*}
	used with $\alpha = \gamma / 2 \le 1$ allows to tame the angular singularity, making it integrable:
	\begin{align*}
		\int b(\cos \theta) \varphi_* (F')^2 \Big(\la v' \ra^{\gamma / 2} & - \la v \ra^{\gamma/2} \Big)^2 \d \sigma \d v_* \d v \\
		& \lesssim \int b(\cos \theta) \sin\left( \tfrac{\theta}{2} \right)^2 \left( \varphi \la v \ra^{4 - \gamma} \right)_* ( \FF' )^2 \d \sigma \d v_* \d v \\
		& \lesssim \int \left( \varphi \la v \ra^{4 - \gamma} \right)_*  \FF ^2\d v_* \d v
		 = \| \la v \ra^{4 - \gamma} \varphi \|_{L^1_v} \| \FF \|^2_{L^2_v},
	\end{align*}
	where we used \eqref{eq:v_to_vprime} and then integrated in $\sigma$ in the second inequality. In conclusion, we have shown
	\begin{align}
		\notag
		\int b(\cos \theta) \varphi_* \la v \ra^{\gamma} (F' - F)^2 \d \sigma \d v_* \d v
		\approx \int b(\cos \theta) & \varphi_* (\FF' - \FF)^2 \d \sigma \d v_* \d v \\
		\label{eq:v_gamma_inside_outside}
		& + \OO\left( \| \varphi^{4 - \gamma } \|_{L^1_v } \| \FF \|_{H^s_v}^2 \right),
	\end{align}
	from which $(1) \approx (2)$ follows. Combining \eqref{eq:v_gamma_inside_outside} with \eqref{eq:anisotrop_homog_different_weight}, and observing that $2s \le 4 - \gamma$, we obtain
	\begin{equation*}
		\int b(\cos \theta) \varphi_* \la v \ra^{\gamma} (F' - F)^2 \d \sigma \d v_* \d v \approx \| \varphi \|_{L^1_v} \| f \|^2_{\dot{H}^{s, *}_v (m) } + \OO \left( \| \la v \ra^{4 - \gamma } \varphi \|_{L^1_v} \| \FF \|_{H^s_v}^2 \right),
	\end{equation*}
	 from which we deduce \eqref{eq:bound_int_b_g_F} thanks to \eqref{eq:aniso_iso_comp_poly}.
	
	\step{3}{Proof of $(2) \approx (3)$}
	The equivalence is immediate thanks to the previous steps and
	$$\la v_* \ra^{-|\gamma|} \la v \ra^{\gamma} \le \la v - v_* \ra^\gamma \le \la v_* \ra^{|\gamma|} \la v \ra^\gamma,$$
	as it leads to the comparison
	\begin{align*}
		\int & b(\cos \theta) \left( \varphi \la v \ra^{-|\gamma|} \right)_* \la v \ra^{\gamma} (F' - F)^2 \d \sigma \d v_* \d v \\
		& \le \int b(\cos \theta) \varphi_* \la v - v_* \ra^\gamma (F' - F)^2 \d \sigma \d v_* \d v  \\
		& \quad \quad \le \int b(\cos \theta) \left( \varphi \la v \ra^{|\gamma|} \right)_* \la v \ra^{\gamma} (F' - F)^2 \d \sigma \d v_* \d v.
	\end{align*}
	This concludes the proof.
\end{proof}

We have seen in Lemma \ref{lem:prop_anisotropic_norm} that only the strength of the angular singularity and the growth of the weight in $v$ (i.e. $\la v \ra^\gamma$ or $\la v - v_* \ra^\gamma$) up to the change of weight $\FF \leftrightarrow F$ are the defining features of the norm $\| \cdot \|_{H^{s, *}_v }$. One could combine this result with Lemma \ref{lem:remove_kin_sing} to show yet another equivalence:
$$ \| f \|^2_{ H^{s, *}_v(m) } \approx \| \la v \ra^{\gamma / 2} f \|_{L^2_v(m)}^2 + \int B(v-v_*, \sigma) \mu_* (\FF' - \FF)^2 \d v_* \dv \d \sigma,$$
which is the definition chosen in the series \cite{AMUXY2011_1, AMUXY2011_2, AMUXY2011_3, AMUXY2012, AMUXY2013, AMUXY3}.

\subsection{Homogeneous estimates in polynomially weighted spaces}\label{scn:est_hom_nc_poly}

The goal of this section is to establish nonlinear estimates for the collision operator $Q$ in spaces with polynomial weights, which we state below.

\begin{prop}\label{prop:Q-L2v}
Assume $k > 9/2 - |\gamma|/2 + 2s$ and consider $m = \la v \ra^k$.  For any $\ell > 13/2 + 2 |\gamma| $ and smooth enough functions $f,g,h$ there holds
\begin{equation}\label{eq:est_Q_f_g_h}
\begin{aligned}
&\la Q(f, g) ,  h \ra_{L^2_v(m)} \\
&\qquad 
\lesssim   
\left( \| \la v \ra^{\gamma/2} f \|_{L^2_v(m)} \|  g \|_{H^{s,*}_v(\la v \ra^{\ell})}  
+ \| f \|_{H^{s,*}_v(\la v \ra^{\ell})} \|  g \|_{H^{s,*}_v(m)} \right) \| \la v \ra^{\gamma/2} h \|_{L^2_v(m)} \\
&\qquad\quad
+  \| f \|_{L^2_v (\la v \ra^{\ell})} \| \la v \ra^{2s} g \|_{H^{s,*}_v(m)}  \| h \|_{H^{s,*}_v(m)}
\end{aligned}
\end{equation}
Moreover there holds
\begin{equation}\label{eq:est_Q_f_g_g}
\begin{aligned}
&\la Q(g, f) ,  f \ra_{L^2_v(m)} \\
&\qquad 
\lesssim  \| g \|_{L^2_v (\la v \ra^{\ell})} \| f \|_{H^{s,*}_v(m)}^2 
+ \| g \|_{H^{s,*}_v(\la v \ra^{\ell})} \| \la v \ra^{\gamma/2} f \|_{L^2_v(m)}  \| f \|_{H^{s,*}_v(m)} \\
&\qquad\quad 
+  \| \la v \ra^{\gamma/2} g \|_{L^2_v(m)} \| f \|_{H^{s,*}_v(\la v \ra^{\ell})} \| \la v \ra^{\gamma/2} f \|_{L^2_v(m)}  .
\end{aligned}
\end{equation}

\end{prop}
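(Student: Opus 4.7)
The plan is to reduce both bounds to an AMUXY-type bilinear estimate plus ``commutator'' terms measuring the failure of $m$ to be a collisional invariant, following the template of \cite{HTT2020, CHJ2022}. Setting $G := mg$ and $H := mh$ and applying the pre-post collisional change of variables \eqref{eq:pre_post}, the pairing becomes
\begin{equation*}
\la Q(f,g), h\ra_{L^2_v(m)} = \int B(v-v_*,\sigma) \, f_* \, g \, \bigl( H' m' - H m \bigr) \d \sigma \d v_* \d v,
\end{equation*}
and I would then decompose $H'm' - Hm = m(H'-H) + H'(m'-m)$ to isolate a ``main'' piece, namely the unweighted Boltzmann pairing $\la Q(f,G), H\ra_{L^2_v}$, from a ``commutator'' piece $\int B f_* g H'(m'-m) \d \sigma \d v_* \d v$ produced by the weight mismatch.

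For the main piece I would mimic the proof of Lemma \ref{lem:est_Q_hom_gauss}: split the $\sigma$-integral into a singular regime $\theta \le \delta$ and a regular regime $\theta \ge \delta$; in the former, Taylor-expand $H$ near $v$ and remove the non-integrable leading contribution via the Cancellation Lemma (Proposition \ref{prop:cancellation_lemma}), then estimate the remainder by Cauchy--Schwarz against the anisotropic form; in the latter, remove the kinetic factor $|v-v_*|^\gamma$ via Lemma \ref{lem:remove_kin_sing} and control the $v_*$-integral by the convolution bounds \eqref{eq:kinpot_convo_0_3/2}--\eqref{eq:kinpot_convo_0_3/2_bis}. The output, translated through Lemma \ref{lem:prop_anisotropic_norm}, is a bilinear bound of the form $\|g\|_{H^{s,*}_v(m)} \|h\|_{H^{s,*}_v(m)}$ multiplied by a polynomial moment of $f$, which is the mechanism producing the first term of \eqref{eq:est_Q_f_g_h}.

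The commutator piece is handled via the pointwise estimate \eqref{eq:weight_split_2}, $|m(v')-m(v)| \lesssim \theta \, m' \la v'_*\ra + \theta^{k} m(v'_*)$. The first contribution gains a factor $\theta$, which softens the angular singularity enough for Cauchy--Schwarz against the anisotropic norm of $h$ to apply, the extra $\la v'_*\ra$ being absorbed by $f_*$ at the price of one extra moment; the second contribution has integrable angular behaviour thanks to $\theta^{k}$ with $k$ large, but forces a polynomial moment on $f$ through the weight $m(v'_*)$, which is precisely the mechanism producing the norm $\|f\|_{H^{s,*}_v(\la v\ra^\ell)}$ in the second term of \eqref{eq:est_Q_f_g_h}. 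The change-of-variables formulas in Proposition \ref{prop:change_of_variables} are used to bring integrals in $v'$ or $v'_*$ back to $v$ or $v_*$; the threshold $\ell > 13/2 + 2|\gamma|$ is the balance point at which the moments required by Lemma \ref{lem:kinpot_convo} and by \eqref{eq:bound_int_b_g_F} both close.

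The hardest point is to obtain \eqref{eq:est_Q_f_g_g} rather than simply specializing \eqref{eq:est_Q_f_g_h} at $h = f$, since the latter produces a spurious term $\|\la v\ra^{2s} f\|_{H^{s,*}_v(m)} \|f\|_{H^{s,*}_v(m)}$. I would recover the improved bound by exploiting the symmetry of the pairing $\la Q(g,f), f\ra_{L^2_v(m)}$: after writing it in weak form and symmetrizing $(v,v_*) \leftrightarrow (v',v'_*)$, the leading singular contribution takes the form $\tfrac{1}{2} \int B \, g_* \la v-v_*\ra^\gamma (F'-F)^2 \d \sigma \d v_* \d v$ with $F = mf$, which by equivalence $(3)$ of Lemma \ref{lem:prop_anisotropic_norm} is controlled by $\|g\|_{L^2_v(\la v\ra^\ell)} \|f\|^2_{H^{s,*}_v(m)}$, the first term of \eqref{eq:est_Q_f_g_g}. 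The remaining two terms of \eqref{eq:est_Q_f_g_g} come from the two weight-commutators and are now only quadratic in $f$ (one copy in $L^2_v(m)$, the other in $H^{s,*}_v(m)$), so they are bounded exactly as in the previous paragraph. The main obstacle will be this symmetrization, together with the careful bookkeeping of moments needed to maintain the clean split between $H^{s,*}_v(m)$ norms (for $f$ or $h$) and $H^{s,*}_v(\la v\ra^\ell)$ norms (for $g$).
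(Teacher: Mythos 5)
Your high-level architecture matches the paper's: a weight-commutator decomposition $H'm'-Hm=m(H'-H)+H'(m'-m)$ producing a ``main'' term $\langle Q(f,mg),mh\rangle_{L^2_v}$ and a ``commutator'' remainder, plus a symmetrization argument to extract the coercive square $(F'-F)^2$ for the bound \eqref{eq:est_Q_f_g_g}. That is exactly what the paper does. Two remarks on where you diverge or leave a real gap.

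First, for the main term the paper does not re-derive an AMUXY-type bound by Taylor expansion and the Cancellation Lemma; it simply invokes He's inhomogeneous trilinear estimate (Lemma~\ref{lem:nonlinear_He}) with $a=b=s$ and weights $w_1=k+\gamma/2+2s$, $w_2=k+\gamma/2$, then converts back to anisotropic norms via \eqref{eq:aniso_iso_comp_poly}. This produces precisely the third term of \eqref{eq:est_Q_f_g_h} (not the first, as you write --- your description ``$\|g\|_{H^{s,*}_v(m)}\|h\|_{H^{s,*}_v(m)}$ times a moment of $f$'' identifies the third term); the commutator Lemma~\ref{lem:commutator} then accounts for the first two. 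Re-opening the black box as you suggest is possible but far longer, and you would essentially reprove He's result.

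Second, and more importantly, there is a genuine gap in your treatment of \eqref{eq:est_Q_f_g_g}. After symmetrizing you claim the coercive piece is $\tfrac12\int B\,g_*\,\langle v-v_*\rangle^\gamma(F'-F)^2$ and that this is ``controlled by equivalence (3) of Lemma~\ref{lem:prop_anisotropic_norm}''. But the kernel $B$ carries the unsmoothed factor $|v-v_*|^\gamma$, not $\langle v-v_*\rangle^\gamma$; for $\gamma<0$ these are not comparable near $v=v_*$, and that short-range singularity is the whole difficulty. Moreover equivalence (3) is a statement that two anisotropic norms of $f$ (with a \emph{fixed} Schwartz-type weight $\varphi$) are equivalent; it does not give a trilinear bound $\int b\,\langle v-v_*\rangle^\gamma g_*(F'-F)^2\lesssim\|g\|_{L^2_v(\langle v\rangle^\ell)}\|f\|^2_{H^{s,*}_v(m)}$ directly --- the relevant inequality is \eqref{eq:bound_int_b_g_F}, which still requires $\langle v\rangle^\gamma$ or $\langle v-v_*\rangle^\gamma$ under the integral, not $|v-v_*|^\gamma$. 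The paper closes this by a Cauchy--Schwarz in $v_*$ against a Schwartz weight $\langle v_*\rangle^{-q}$, using $|v-v_*|^\gamma=|v-v_*|^\gamma\langle v-v_*\rangle^{-\gamma/2}\cdot\langle v-v_*\rangle^{\gamma/2}$ to split off one factor with a controllable singularity (removed by Lemma~\ref{lem:remove_kin_sing}), the other factor carrying the moment of $g$. You would need to insert this step; without it the passage from $|v-v_*|^\gamma$ to $\langle v-v_*\rangle^\gamma$ is unjustified and the estimate does not close.
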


These estimates will be proved by combining commutator estimates (Lemma \ref{lem:commutator}) with He's estimates in $L^2_v$ (Lemma \ref{lem:nonlinear_He}, for \eqref{eq:est_Q_f_g_h}) or new anisotropic estimates in~$L^2(m)$ (for~\eqref{eq:est_Q_f_g_g}). Let us start by recalling the estimate established in He~\cite{H2018}.

\begin{lem}[{\cite[Theorem~1.1]{H2018}}]
	\label{lem:nonlinear_He}
	Assume $-1 < \gamma + 2s < 0$. For any $w_1, w_1 \ge 0$ such that $w_1 +  w_2 = \gamma + 2 s$ and $a, b \in [0, 2 s]$ such that $a + b = 2 s$, any $\ell_0 > 3/2 + |\gamma + 2 s|$, there holds
	\begin{equation*}
		\la Q(f, g), h \ra_{L^2_v} \lesssim \| f \|_{L^2_v ( \la v \ra^{\ell_0} ) } \| g \|_{H^a_v(\la v \ra^{w_1})} \| h \|_{H^b_v(\la v \ra^{w_2})}.
	\end{equation*}
\end{lem}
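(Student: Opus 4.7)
The plan is to follow He's approach, based on the Bobylev/Carleman representation of $Q$ combined with a dyadic decomposition of the angular kernel. The essential mechanism is that the singularity $b(\cos\theta) \sim \theta^{-1-2s}$ generates $2s$ worth of fractional regularity which is freely distributable between $g$ and $h$ (accounting for the flexibility $a+b=2s$), while the soft kinetic factor $|v-v_*|^\gamma$ with $\gamma+2s<0$ acts, via Lemma~\ref{lem:kinpot_convo}, as a weighted convolution bounded on the first argument $f \in L^2_v(\la v\ra^{\ell_0})$. The weight constraint $w_1+w_2=\gamma+2s$ reflects exactly the budget of kinetic weight available after Lemma~\ref{lem:remove_kin_sing} replaces the singular factor $|v-v_*|^\gamma$ by the regular one $\la v-v_*\ra^\gamma$.

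First I would decompose dyadically $b = \sum_{j \geq 0} b_j$ with $b_j$ localized on $\theta \approx 2^{-j}$ and $\int b_j\, d\sigma \lesssim 2^{2js}$, inducing $Q = \sum_j Q_j$. For each $j$, Propositions~\ref{prop:change_of_variables} and~\ref{prop:cancellation_lemma} allow me to rewrite
\begin{equation*}
\la Q_j(f,g), h\ra_{L^2_v} = \int B_j\, f_* g\, (h'-h)\, d\sigma\, dv_*\, dv + \la f *_v S_j,\, g h\ra_{L^2_v},
\end{equation*}
the second term being benign since $S_j$ is an $L^1$-controlled convolution kernel. In the first term I distribute the kinetic weight via $\la v-v_*\ra^{\gamma+2s} \lesssim \la v\ra^{w_1+w_2}\la v_*\ra^{|\gamma+2s|}$ (using $w_1,w_2\ge 0$ and $w_1+w_2=\gamma+2s\leq 0$) and apply Lemma~\ref{lem:remove_kin_sing} to replace $|v-v_*|^\gamma$ by $\la v-v_*\ra^\gamma$. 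A Cauchy-Schwarz in $(v,v_*,\sigma)$ then separates $f_* g$ from the finite-difference $h'-h$: the first factor is controlled by $\|f\|_{L^2_v(\la v\ra^{\ell_0})}^2 \|g\|_{L^2_v(\la v\ra^{w_1})}^2$ after integrating in $\sigma$ and applying \eqref{eq:kinpot_convo_0_3/2} to absorb the convolution against $f_*$, while the second factor, after summation over $j$, equals $\|h\|^2_{H^{2s}_v(\la v\ra^{w_2})}$ by the finite-difference characterization of fractional Sobolev spaces. This delivers the endpoint $(a,b)=(0,2s)$; the symmetric endpoint $(a,b)=(2s,0)$ follows by the pre-post involution \eqref{eq:pre_post} together with \eqref{eq:v_to_vprime}, which transfers the finite-difference onto $g$. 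Real interpolation between the two endpoints then yields the full range $a+b=2s$ with $a,b\in[0,2s]$.

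The main obstacle is that a naive Cauchy-Schwarz in $\sigma$ formally loses the divergent factor $\int_{\S^2} b(\cos\theta)\,d\sigma = +\infty$. The standard remedy is to exploit the second-order cancellation in $h(v')-h(v) = (v'-v)\cdot\nabla_v h(v) + \OO(|v'-v|^2\|\nabla^2_v h\|)$: the linear term vanishes after $\sigma$-integration on $\S^2$ by symmetry, so only the quadratic contribution survives, making $\int_{\S^2}\theta^2\, b(\cos\theta)\,d\sigma$ finite. Equivalently, on the Fourier side Bobylev's identity absorbs the angular singularity into the increment $\hat h(\xi^+)-\hat h(\xi)$, worth $|\xi|^s$ in the dyadic region where $b_j$ is supported. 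The threshold $\ell_0 > 3/2 + |\gamma+2s|$ is then dictated by the convolution estimates \eqref{eq:kinpot_convo_0_3/2}--\eqref{eq:kinpot_convo_0_3/2_bis} applied to $f$, augmented by the weight loss $|\gamma+2s|$ produced when redistributing kinetic weights between $f_*$, $g$ and $h$.
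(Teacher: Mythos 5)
This lemma is not proved in the paper at all: it is imported verbatim from He's work (\cite[Theorem~1.1]{H2018}), so the paper's ``proof'' is a citation, and your task was in effect to reconstruct He's theorem. Your outline does not do so; it has gaps at exactly the points where He's argument is delicate.

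The central problem is your claim that, after Cauchy--Schwarz, the factor carrying $(h'-h)^2$ ``equals $\|h\|^2_{H^{2s}_v(\la v\ra^{w_2})}$ by the finite-difference characterization of fractional Sobolev spaces.'' It does not. Since $|v'-v|\approx |v-v_*|\sin(\theta/2)$, pairing the angular singularity $b(\cos\theta)\sim\theta^{-1-2s}$ with the \emph{squared} increment $(h'-h)^2$ produces an $H^s$-type quantity, not an $H^{2s}$ one; this is precisely the content of the comparison \eqref{eq:aniso_iso_comp_poly} in Lemma~\ref{lem:prop_anisotropic_norm}, where such integrals are sandwiched between weighted $H^s$ norms. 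So the naive ``split $f_*g$ from $h'-h$'' route caps at $b=s$ and can never reach the endpoint $(a,b)=(0,2s)$; consequently the interpolation step has no endpoints to interpolate between. Relatedly, your dyadic bookkeeping does not close: after Cauchy--Schwarz the factor containing $g$ inherits $\int b_j\,d\sigma\approx 2^{2js}$, and nothing in your sketch compensates this loss — the compensation in He's proof comes from a simultaneous Littlewood--Paley decomposition of $g$ and $h$ in the Fourier variable, interacting with the angular localization through Bobylev's identity, which is the actual technical heart of the theorem and is absent from your plan. Two further points would also need repair: the displayed ``identity'' for $\la Q_j(f,g),h\ra$ is not an identity (the pre-post change of variables \eqref{eq:pre_post} alone gives $\int B_j f_* g(h'-h)$ with no extra cancellation term, and conversely moving the increment onto $g$ generates a genuine remainder that must be handled by Proposition~\ref{prop:cancellation_lemma}); and Lemma~\ref{lem:remove_kin_sing} requires strong polynomial decay $\la v_*\ra^{-q}$ with $q>3+2s+a+b$ in the $v_*$ variable, which an $f$ controlled only in $L^2_v(\la v\ra^{\ell_0})$, $\ell_0>3/2+|\gamma+2s|$, does not supply in the pointwise form you invoke. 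If you need this estimate, cite \cite{H2018}; a self-contained proof is a substantially harder harmonic-analysis argument than the sketch suggests.
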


Let us now state and prove the commutator estimates required to prove Proposition~\ref{prop:Q-L2v}.

\begin{lem}
	\label{lem:commutator}
	Suppose $k > 9/2 - |\gamma| /2 + 2 s$ and consider $m=\la v \ra^k$. For any smooth enough functions $f,g,h$ and any $\ell > 13/2 + 2|\gamma| $ there holds
	\begin{multline*}
		|\la Q(f, g), h \ra_{L^2_v(m) } - \la Q(f, m g), mh \ra_{L^2_v}| \\
		\lesssim \| \la v \ra^{\gamma/2} h \|_{L^2_v(m)} \Big( \| g \|_{H^{s,*}_v(m)} \| f \|_{H^{s,*}_v(\la v \ra^\ell)} + \| g \|_{H^{s,*}_v(\la v \ra^\ell)} \| \la v \ra^{\gamma/2} f \|_{L^2_v(m)} \\
		+ \| g \|_{H^{s,*}_v(m)} \| f \|_{L^2_v(\la v \ra^{\ell})}\Big).
	\end{multline*}
\end{lem}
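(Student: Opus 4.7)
The plan starts from the algebraic identity
\begin{equation*}
\la Q(f,g), h\ra_{L^2_v(m)} - \la Q(f, mg), mh\ra_{L^2_v} = \int B(v-v_*,\sigma)\, f'_* \, g' \, (m - m')\, m\, h \, \d\sigma\, \d v_*\, \d v,
\end{equation*}
which follows by expanding $Q$ according to \eqref{eq:opBoltzmann}: the loss term $f_* g$ contributes the same integrand $f_* g(v) m(v)^2 h(v)$ to both inner products and cancels out, while the gain term $f'_* g'$ is weighted by $m(v)^2$ in the first and by $m(v)m(v')$ in the second, producing the factor $m(v)(m(v) - m(v'))$ in the difference.

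With this identity in hand, I plug in the pointwise bound \eqref{eq:weight_split_2},
\[
|m - m'| \lesssim \theta\, m(v')\, \la v'_*\ra + \theta^k\, m(v'_*),
\]
and split the integral as $T_1 + T_2$. On top of this, I apply Villani-type algebraic decompositions $g' = g + (g'-g)$ and, where needed, $f'_* = f_* + (f'_*-f_*)$, rewriting the commutator as a sum of ``difference'' sub-terms (containing $g'-g$ or $f'_*-f_*$) and ``remainder'' sub-terms (with only principal parts, paired with the $(m-m')$ factor which itself supplies extra angular smallness).

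The difference sub-terms are handled by Cauchy--Schwarz in $(v, v_*, \sigma)$: on one side I recognize, modulo lower-order terms of the form $g\cdot(m-m')$ or $f_*\cdot(m'-m)$, the anisotropic-seminorm integrand for $g$ (with $\GG = m\la v\ra^{\gamma/2}g$) or for $f$ (with analogous weight), which Lemma~\ref{lem:prop_anisotropic_norm} controls by $\|g\|_{H^{s,*}_v(m)}$ or $\|f\|_{H^{s,*}_v(\la v\ra^\ell)}$; the other side carries $b(\cos\theta)\theta^2 \in L^1_\sigma$ (integrable for all $s\in(0,1)$) together with $|v-v_*|^\gamma$ convolved against a Schwartz weight in $v_*$, bounded by Lemma~\ref{lem:kinpot_convo_schwartz} and yielding $\|\la v\ra^{\gamma/2}h\|_{L^2_v(m)}$. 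The remainder sub-terms are easier: they come with an additional angular factor, either from the $\theta^k$ present in $T_2$ or from the Taylor cancellation $m(v)-m(v') = -(v'-v)\cdot\nabla m(v) + O(|v-v'|^2)$ whose linear part vanishes after symmetric $\sigma$-integration, leaving an effective $\sin^2(\theta/2)$ weight; they are then estimated by plain Cauchy--Schwarz together with Lemma~\ref{lem:kinpot_convo}. Summing all contributions matches the three-term right-hand side of the statement, each term corresponding to a different allocation of weights between $f$ and $g$ in the split above.

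The main obstacle is the angular singularity: the raw pointwise bound $|m-m'|\lesssim\theta$ delivers only one power of $\theta$, while $b(\cos\theta)\theta$ is \emph{not} integrable against $\d\sigma$ once $s \ge 1/2$. The remedy is precisely the double Villani decomposition above, which generates the missing half-power of angular smallness from the function differences $(g'-g)$ or $(f'_*-f_*)$ entering the anisotropic energy, at the cost of producing lower-order remainders that come equipped with the $(m-m')$ factor already and thus fall into the more benign regime handled by the $T_2$-type estimate.
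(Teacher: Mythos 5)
Your algebraic starting point is exactly the paper's: the commutator reduces to $\int B\,f'_*\,g'\,(m-m')\,mh$, and the high-level strategy — trade the weight difference $m-m'$ for one power of $\theta$, then generate a second power from a function difference to make the angular kernel integrable, then close via Cauchy--Schwarz against the anisotropic seminorm — is also the same in spirit as the paper's. The paper, however, does not use a Villani decomposition of $g'$ and $f'_*$ plus Taylor cancellation of $m-m'$; it imports the explicit six-term decomposition $\Gamma_1,\dots,\Gamma_6$ of \cite[Prop.\ 3.1]{AMSY2020}, treats $\Gamma_2,\dots,\Gamma_6$ directly with Lemma~\ref{lem:kinpot_convo}, and then isolates $\Gamma_1$ (the only term retaining a genuine $\sin(\theta/2)\,b(\cos\theta)$ singularity) and splits $\Gamma_1=\Gamma_{1,1}+\Gamma_{1,2}$ using the orthogonality of $\tilde\omega=(v'-v)/|v'-v|$ to $v'-v_*$, so that $\Gamma_{1,1}$ naturally produces a difference $\mathcal G-\mathcal G'$.

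The substantive gap is in how you handle the difference sub-terms. You propose a single Cauchy--Schwarz, putting $b(\cos\theta)\theta^2\,|v-v_*|^\gamma$ and $h$ on one side and the anisotropic-seminorm integrand on the other. But the seminorm $\dot H^{s,*}_v$ from \eqref{eq:def-dotHs*} requires a regular kinetic factor (some $\la v\ra^\gamma$- or $\la v-v_*\ra^\gamma$-type weight, up to the equivalences of Lemma~\ref{lem:prop_anisotropic_norm}), not the raw singular $|v-v_*|^\gamma$. There is no clean way to split $|v-v_*|^\gamma$ between the two Cauchy--Schwarz factors for soft potentials: putting all of it on the $h$ side leaves the seminorm integrand without the weight it needs, while splitting it $\gamma/2 + \gamma/2$ forces $|v-v_*|^{2\gamma}$ on one squared factor, which can be non-integrable near $v=v_*$ in the admissible range of $\gamma$. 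The paper's resolution is a three-way Hölder inequality (exponents $2,4,4$) distributing the exponent unevenly, combined with Lemma~\ref{lem:remove_kin_sing} (a Carleman-representation argument) to trade $|v-v_*|^{2\gamma}\la v-v_*\ra^{-\gamma}$ for $\la v-v_*\ra^{\gamma}$ inside the squared anisotropic integrand; neither ingredient appears in your plan. In the same vein, the weight that makes the anisotropic seminorm finite is a decaying factor in $v_*$ (e.g.\ $\mu_*$ or $\la v_*\ra^{-q}$), which has to be manufactured by a further Cauchy--Schwarz in $v_*$ splitting $f_*$ between two factors; the paper does this by balancing $(\la v\ra^{2+n}f)_*^2$ against $\la v_*\ra^{-2n}$, a step your sketch passes over. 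Until you say how the singular kinetic kernel and the $v_*$-weight are distributed, the difference sub-terms are not actually closed in terms of $\|f\|_{H^{s,*}_v}$, $\|g\|_{H^{s,*}_v}$ and $\|\la v\ra^{\gamma/2}h\|_{L^2_v(m)}$.
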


\begin{proof}
We shall adapt the proof of \cite[Proposition 3.1]{AMSY2020} where the hard potentials case $\gamma \in [0,1]$ was considered. We start back from their decomposition:
	\begin{align*}
		\la Q(f, g), h \ra_{L^2_v(m) } - \la Q(f, m g), mh \ra_{L^2_v} = \int B(v-v_*,\sigma) f'_* g' H (m-m') \d \sigma \d v_* \d v = \sum_{j=1}^{6} \Gamma_j,
	\end{align*}
	where the terms $\Gamma_j$ are defined in the proof of \cite[Proposition 3.1]{AMSY2020} and, under the assumption $k > 9/2 - |\gamma| /2 + 2 s$, were shown to satisfy
\begin{align*}
		\Gamma_1 & = k \int b(\cos \theta) |v - v_*|^{\gamma} \la v \ra^{k-2} |v-v_*|  (v_* \cdot \omega) \cos^{k-1} \left( \tfrac{\theta}{2}\right) \sin \left(\tfrac{\theta}{2} \right) f_* g \left(m h\right) ' \d \sigma \d v_* \d v,
\end{align*}
where 
$$
\omega = \frac{\sigma - (\sigma \cdot \xi) \xi }{|\sigma - (\sigma \cdot \xi) \xi |}  \quad \text{and} \quad  \xi = \frac{v-v_*}{|v-v_*|},
$$
as well as the bounds
\begin{align*}
		\Gamma_2 &\lesssim I\left( g ; (m f)^2 \right)^{1/2} \times I\left( g ; \left(m h\right)^2 \right),\\
		\Gamma_3 &\lesssim I\left( \la v \ra g ; \left(\la v \ra^{k-1} f\right)^2\right)^{1/2} \times I\left( \la v \ra g ; \left(m h\right)^2 \right)^{1/2},\\
		\Gamma_4 &\lesssim  I\left( \la v \ra^{2} f ; (\la v \ra^{k-2} g)^2 \right) \times I\left( \la v \ra^{2} f; \left(m h\right)^2 \right),\\
		\Gamma_5 &\lesssim I\left( \la v \ra^4 f ; \left(\la v \ra^{k-4} g \right)^2 \right)^{1/2} \times I\left( \la v \ra^4 f  ; \left(m h\right)^2 \right)^{1/2},\\
		\Gamma_6 &\lesssim I\left( f ; \left(m g\right)^2 \right)^{1/2} \times I\left( f ; \left(m h\right)^2 \right)^{1/2},
\end{align*}
where we have denoted for compactness
	\begin{gather*}
		 I(\varphi ; \Phi) := \int |v - v_*|^\gamma \varphi_* \Phi \d v \d v_*.
	\end{gather*}
	First, in virtue of \eqref{eq:kinpot_convo_0_3/2_bis}, we have for $\ell_0 >  4 +3/2 + s$ the following estimate:
	\begin{equation*}
		\sum_{j = 2}^{6} \Gamma_j \lesssim \| \la v \ra^{\gamma / 2} h \|_{L^2_v(m)} \left(\| f \|_{H^s_v( \la v \ra^{\ell_0} ) }  \| \la v \ra^{\gamma/2} g \|_{L^2_v(m)} + \| g \|_{H^s_v( \la v \ra^{\ell_0} ) }  \| \la v \ra^{\gamma/2} f \|_{L^2_v(m)} \right).
	\end{equation*}

It remains to estimate the term $\Gamma_1$. Still following the proof of \cite[Proposition 3.1]{AMSY2020}), we denote $\tilde \omega = \frac{v'-v}{|v'-v|}$, so that $\tilde \omega$ is orthogonal to $v'-v_*$, and thus we split $\Gamma_1 = \Gamma_{1,1} + \Gamma_{1,2}$ with
$$
\Gamma_{1,1} = k \int b(\cos \theta) \cos^{k} \left( \tfrac{\theta}{2}\right) \sin \left(\tfrac{\theta}{2} \right) |v - v_*|^{\gamma+1} \la v \ra^{k-2} (v_* \cdot \omega)  f_* g \left(m h\right) ' \d \sigma \d v_* \d v,
$$
and
$$
\Gamma_{1,2} = k \int b(\cos \theta) \cos^{k-1} \left( \tfrac{\theta}{2}\right) \sin^2 \left(\tfrac{\theta}{2} \right) |v - v_*|^{\gamma+1} \la v \ra^{k-2} (v_* \cdot \tilde \omega)  f_* g \left(m h\right) ' \d \sigma \d v_* \d v.
$$
For the term $\Gamma_{1,2}$ we can then argue as for the terms $(\Gamma_j)_{2 \le j \le 6}$ and we obtain
$$
\begin{aligned}
\Gamma_{1,2} 
&\lesssim I\left( \la v \ra^{2} f ; (\la v \ra^{k-1} g)^2 \right) \times I\left( \la v \ra^{2} f; \left(m h\right)^2 \right) \\
&\lesssim \| \la v \ra^{\gamma / 2} h \|_{L^2_v(m)} \| f \|_{H^s_v( \la v \ra^{\ell_0} ) }  \| \la v \ra^{\gamma/2} g \|_{L^2_v(m)} .
\end{aligned}
$$
Moreover the term $\Gamma_{1,1}$ is shown to satisfy, denoting $\mathcal{G} := \la v \ra^{k-2} g$,
$$
\Gamma_{1,1} \lesssim \int b(\cos \theta) \sin \left(\tfrac{\theta}{2}\right) |v-v_*|^{\gamma} \la v_* \ra^2 |f_*| |\mathcal G - \mathcal G' | |m' h'| \d \sigma \d v_* \d v.
$$
Using Hölder's inequality for some $n > 0$ to choose later,
	\begin{align*}
		\Gamma_{1,1} &\lesssim  \int b^{1/2}(\cos \theta) |v - v_*|^{\gamma/2} \sin \left(\tfrac{\theta}{2}\right) (m h)' \left(\la v \ra^2 f\right)^{1/2}_*  \\
		&\quad \times 
		b^{1/4}(\cos \theta) \la v - v_* \ra^{\gamma/4} \la v \ra^{1/2} \left| \mathcal{G} - \mathcal{G}'\right|^{1/2} \left(\la v \ra^{2+ n} f\right)_*^{1/2}  \\
		&\quad \times 
		b^{1/4}(\cos \theta ) |v - v_*|^{\gamma/2} \la v - v_* \ra^{- \gamma / 4} \la v \ra^{1/2} \left|\mathcal{G} - \mathcal{G}'\right|^{1/2} \la v_* \ra^{-n/2} \d \sigma \d v_* \d v\\
		&\lesssim 
		\left(\int b(\cos \theta) |v - v_*|^{\gamma} \sin^2 \left(\tfrac{\theta}{2}\right) \left[(m h)'\right]^2\left(\la v \ra^2 f\right)_* \d \sigma \d v_* \d v\right)^{1/2} \\
		&\quad \times
		\left(\int b(\cos \theta) \la v - v_* \ra^{\gamma} \la v \ra^{2} \left( \mathcal{G} - \mathcal{G}'\right)^{2} \left(\la v \ra^{2+ n} f\right)_*^2  \d \sigma \d v_* \d v\right)^{1/4} \\
		&\quad \times 
		\left(\int b(\cos \theta ) |v-v_*|^{2 \gamma} \la v - v_* \ra^{- \gamma} \la v \ra^{2} \left(\mathcal{G} - \mathcal{G}'\right)^2 \la v_* \ra^{-2 n} \d \sigma \d v_* \d v\right)^{1/4}.
	\end{align*}
	The change of variable \eqref{eq:v_to_vprime} followed by \eqref{eq:kinpot_convo_0_3/2_bis} in the first integral with $\ell_1 > 2 + 3/2+s$ gives
	\begin{align*}
		\Gamma_{1,1} &\lesssim 
		\| f \|_{H^s_v ( \la v \ra^{\ell_1} ) }^{1/2} \|  \la v \ra^{\gamma/2} h \|_{L^2_v(m)} \\
		&\quad \times
		\left(\int b(\cos \theta) \la v - v_* \ra^{\gamma} \la v \ra^{2} \left( \mathcal{G} - \mathcal{G}'\right)^{2} \left(\la v \ra^{2+ n} f\right)_*^2  \d \sigma \d v_* \d v\right)^{1/4} \\
		&\quad \times 
		\left(\int b(\cos \theta ) |v-v_*|^{2 \gamma} \la v - v_* \ra^{- \gamma} \la v \ra^{2} \left(\mathcal{G} - \mathcal{G}'\right)^2 \la v_* \ra^{-2 n} \d \sigma \d v_* \d v\right)^{1/4}. 
	\end{align*}
	Next, using Lemma \ref{lem:remove_kin_sing} in the third integral, taking $2 n > 1 + 2 s + \gamma$ (note that $2 \gamma > -3$), we have
	\begin{align*}
		\Gamma_{1,1} &\lesssim   \| f \|_{H^s_v ( \la v \ra^{\ell_1} ) }^{1/2} \|  \la v \ra^{\gamma/2} h \|_{L^2_v(m)} \\
		&\quad \times 
		\left(\int b(\cos \theta) \la v - v_* \ra^{\gamma} \la v \ra^{2} \left( \mathcal{G} - \mathcal{G}'\right)^{2} \left(\la v \ra^{2+ n} f\right)_*^2  \d \sigma \d v_* \d v\right)^{1/4} \\
		&\quad \times 
		\left(\int b(\cos \theta ) \la v - v_* \ra^{\gamma} \la v \ra^{2} \left(\mathcal{G} - \mathcal{G}'\right)^2 \la v_* \ra^{-2 n} \d \sigma \d v_* \d v\right)^{1/4}.
	\end{align*}
	The inequality $\la v \ra \le \la v -v _* \ra \la v_* \ra$ and the fact that $\gamma \le 0$ then imply
	\begin{align*}
		\Gamma_{1,1} &\lesssim  \| f \|_{H^s_v ( \la v \ra^{\ell_1} ) }^{1/2} \|  \la v \ra^{\gamma/2} h \|_{L^2_v(m)} \\
		&\quad \times 
		\left(\int b(\cos \theta) \la v \ra^{2 + \gamma} \left( \mathcal{G} - \mathcal{G}'\right)^{2} \left(\la v \ra^{2+ n - \gamma /2} f\right)_*^2  \d \sigma \d v_* \d v\right)^{1/4} \\
		&\quad \times 
		\left(\int b(\cos \theta ) \la v  \ra^{2 + \gamma} \left(\mathcal{G} - \mathcal{G}'\right)^2 \la v_* \ra^{-2 n - \gamma} \d \sigma \d v_* \d v\right)^{1/4}.
	\end{align*}
	As $2 + \gamma \le 2$, we may use \eqref{eq:bound_int_b_g_F} to bound these two integrals:
	\begin{align*}
		\Gamma_{1,1} & \lesssim \| f \|_{H^s_v ( \la v \ra^{\ell_1} ) }^{1/2} \|  \la v \ra^{\gamma/2} h \|_{L^2_v(m)} \| g \|_{H^{s, *}_v(m)} \| \la v \ra^{2 - \gamma} \left( \la v \ra^{2 + n - \gamma/2} f \right)^2 \|_{L^1_v}^{1/4} \| \la v \ra^{-2(n - 2 + \gamma)} \|_{L^1_v}^{1/4} \\
		& \lesssim \| f \|_{H^s_v ( \la v \ra^{\ell_1} ) }^{1/2} \|  \la v \ra^{\gamma/2} h \|_{L^2_v(m)} \| g \|_{H^{s, *}_v(m)} \|  \la v \ra^{\ell_2} f \|_{L^2_v}^{1/2},
	\end{align*}
	where we considered $n > 7 / 2 + | \gamma|$ and $\ell_2 = 3 + n + |\gamma| > 13/2 + 2 | \gamma|$.
	Note that since $13/2 + 2 | \gamma| = 11/2 - \gamma / 2 + s + ( - \gamma - \frac{\gamma + 2 s}{2} + 1 )$ and $\gamma + 2 s < 0$, we have $\ell_2 > \ell_0 + | \gamma| / 2$. We conclude the proof by gathering previous estimates, using \eqref{eq:aniso_iso_comp_poly}, and taking~$\ell = \max(\ell_0 + |\gamma|/2 , \ell_1, \ell_2) = \ell_2$.
\end{proof}

We can now prove the main estimates of this subsection, that is to say those of Proposition~\ref{prop:Q-L2v}.

\begin{proof}[Proof of Proposition \ref{prop:Q-L2v}]
	In this proof, we denote $F := mf$, $G := mg$ and $H := mh$.

	\step{1}{Proof of \eqref{eq:est_Q_f_g_h}}
	The first estimate \eqref{eq:est_Q_f_g_h} is a combination of Lemmas \ref{lem:nonlinear_He} and \ref{lem:commutator}. We first observe that Lemma \ref{lem:nonlinear_He} with $w_1 = k + \gamma / 2 + 2 s$, $w_2 = k + \gamma / 2$ and $a=b=s$ yields, for $\ell_0> 3/2 + |\gamma+2s|$,
	\begin{align*}
		\la Q(f, G), H \ra_{L^2_v} & \lesssim  \| f \|_{L^2_v ( \la v \ra^{\ell_0} ) } \| \la v \ra^{\gamma / 2 + 2 s} g \|_{H^{s}_v(m)} \| \la v \ra^{\gamma / 2} h \|_{H^{s}_v(m)}\\
		& \lesssim \| f \|_{L^2 ( \la v \ra^{\ell_0} )} \| \la v \ra^{2 s} g \|_{H^{s, *}_v(m) } \| h \|_{H^{s, *}_v(m)},
	\end{align*}
	where we used \eqref{eq:aniso_iso_comp_poly} in the last line. We then deduce \eqref{eq:est_Q_f_g_h} by putting this estimate together with Lemma~\ref{lem:commutator} for $\ell>13/2 + 2|\gamma| > \ell_0$.

	\step{2}{Reductions for the proof of \eqref{eq:est_Q_f_g_g}}
	First, we decompose the trilinear form using a commutator:
	\begin{align*}
		\lla Q (g, f), f \rra_{L^2_v(m)} = \lla Q(g, F), F \rra_{L^2_v} + \Integ_3,
	\end{align*}
	where we denoted
	\begin{equation*}
		\Integ_3 := \lla Q (g, f), f \rra_{L^2_v(m)} - \lla Q(g, F), F \rra_{L^2_v}.
	\end{equation*}
	Second, we decompose the remaining term as
	\begin{align*}
		\lla Q(g, F), F \rra_{L^2_v(m)}  &= \int B(v-v_*, \sigma) (g'_* F' - g_* F) F \d \sigma \d v \d v_* \\
		&= \frac{1}{2} \int B(v-v_*, \sigma) (2 g'_* F' F - g_* F^2 - g_* (F')^2 ) \d \sigma \d v \d v_*\\
		&\quad + \frac{1}{2} \int B(v-v_*, \sigma) g_* ((F')^2 - F^2) \d \sigma \d v \d v_*.
	\end{align*}
	Using the change of variables \eqref{eq:pre_post} in the first term of the first integral and the cancellation lemma \ref{prop:cancellation_lemma} in the second integral, we obtain for some $c > 0$
	\begin{align*}
		\lla Q(g, F), F \rra_{L^2_v(m)}  &= - \int B(v-v_*, \sigma) g_* (F' - F)^2 \d \sigma \d v \d v_* - c \int |v - v_*|^\gamma g_* F^2 \d v \d v_* \\
		& =: \Integ_{1} + \Integ_{2},
	\end{align*}
	To sum up, we have the decomposition
	\begin{gather*}
		\lla Q(g, f), f \rra_{L^2_v(m)} = \Integ_1 + \Integ_2 + \Integ_3.
	\end{gather*}
	The term $\Integ_2$ satisfies by Lemma \ref{lem:kinpot_convo}, for any $\ell_0 > 3/2 + s$,
	\begin{equation*}
		\Integ_2 \lesssim \| g \|_{H^s_v(\la v \ra^{\ell_0})} \| \la v \ra^{\gamma / 2} f \|_{L^2(m)}^2,
	\end{equation*}
	and the term $\Integ_3$ satisfies by Lemma \ref{lem:commutator}, for any $\ell > 13/2 + 2 |\gamma|$,
	\begin{align*}
		\Integ_3 &\lesssim \| \la v \ra^{\gamma/2} f \|_{L^2_v(m)} \Big( \| f \|_{H^{s,*}_v(m)} \| g \|_{H^{s,*}_v(\la v \ra^\ell)}  + \| f \|_{H^{s,*}_v(\la v \ra^\ell)} \| \la v \ra^{\gamma/2} g \|_{L^2_v(m)} \\
		&\quad
		+ \| f \|_{H^{s,*}_v(m)} \| g \|_{L^2_v(\la v \ra^{\ell})}\Big),
	\end{align*}
	so that, since $\ell > \ell_0 + |\gamma|/2$, the sum $\Integ_2 + \Integ_3$ satisfies the same estimate as $\Integ_3$.

	Let us turn to $\Integ_1$. Using the Cauchy-Schwarz inequality with some positive $q>0$ to be chosen later, we have
	\begin{align*}
		\Integ_{1} =& \int b(\cos \theta)  |v-v_*|^\gamma g_* (F' - F)^2 \d \sigma \d v \d v_* \\
		\le & \left(\int b(\cos \theta) |v-v_*|^{2 \gamma} \la v-v_* \ra^{-\gamma} \la v_* \ra^{-q} (F'-F)^2 \d \sigma \d v \d v_* \right)^{1/2} \\
		& \times \left(\int b(\cos \theta) \la v-v_* \ra^{\gamma} \left(\la v \ra^{q/2} g\right)_*^2 (F'-F)^2 \d \sigma \d v \d v_* \right)^{1/2}.
	\end{align*}
	Assuming $q > 3 + \gamma + 2 s$, we remove the singularity in the integral prefactor using Lemma~\ref{lem:remove_kin_sing}, and then use the inequality~$\la v - v_* \ra^\gamma \le \la v \ra^\gamma \la v_* \ra^{-\gamma}$ in both integrals:
	\begin{align*}
		\Integ_{1} \lesssim & \left(\int b(\cos \theta) \la v-v_* \ra^{\gamma} \la v_* \ra^{-q} (F'-F)^2 \d \sigma \d v \d v_* \right)^{1/2} \\
		& \times \left(\int b(\cos \theta) \la v - v_* \ra^\gamma \left(\la v \ra^{q/2} g\right)_*^2 (F'-F)^2 \d \sigma \d v \d v_* \right)^{1/2} \\
		\lesssim & \left(\int b(\cos \theta) \la v_* \ra^{-{q - \gamma}} \la v \ra^{\gamma} (F'-F)^2 \d \sigma \d v \d v_* \right)^{1/2} \\
		& \times \left(\int b(\cos \theta) \left(\la v \ra^{q/2 - \gamma/2} g\right)_*^2 \la v \ra^{\gamma} (F'-F)^2 \d \sigma \d v \d v_* \right)^{1/2}.
	\end{align*}
	Using \eqref{eq:bound_int_b_g_F} and imposing $q>7-2\gamma$ so that $\la v \ra^{4-q-2\gamma}$ is integrable, we then obtain
	\begin{align*}
		\Integ_{1} & \lesssim \| f \|^2_{H^{s, *}_v(m) } \| \la v \ra^{4 - \gamma} \left(\la v \ra^{q/2} g\right)^2 \|_{L^1_v }^{1/2} \\
		& \lesssim \| f \|^2_{H^{s, *}_v(m) } \| \la v \ra^{\ell_1} g \|_{L^2_v },
	\end{align*}
	where $\ell_1 = 2 - \gamma/2 + q /2$ satisfies $\ell_1 > 11/2 + 2|\gamma|$ and $\ell_1 > 7/2 + 3|\gamma|/2 + s$.
	The estimate~\eqref{eq:est_Q_f_g_g} is then proved by putting together previous estimates and observing that $ \max(\ell , \ell_1 ) = \ell$.
\end{proof}

\section{Linear theory}\label{scn:linear_estimates}

\subsection{Estimates on $\LLL$}

The goal of this subsection is to prove the following proposition.
\begin{prop}\label{prop:dissipative_lot}
Let $k > 7/2 - |\gamma|/2 + 2 s$ and denote the weight function $m = \la v \ra^k$. For any smooth enough function $f$ there holds
\begin{equation}
		\label{eq:dissipative_noncutoff_lot}
		\int (\LLL f) f m^2 \d x \d v \le - c \| f \|_{L^2_x H^{s, *}_v (m)}^2 + C \| f \|^2_{L^2_{x, v}},
	\end{equation}
for some positive constants $c,C>0$.
\end{prop}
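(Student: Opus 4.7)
After integrating in $x$, it suffices to establish a pointwise-in-$x$ version of \eqref{eq:dissipative_noncutoff_lot}. The plan is to split $\LLL f = Q(\mu, f) + Q(f, \mu)$ and extract the coercive contribution from $\la Q(\mu, f), f\ra_{L^2_v(m)}$, controlling $\la Q(f, \mu), f\ra_{L^2_v(m)}$ and the various commutator errors as a small perturbation of this coercive piece plus a compact remainder in $L^2_{x, v}$.

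For the leading term I would reuse the decomposition employed in the proof of \eqref{eq:est_Q_f_g_g}, specialized to $g = \mu$. Setting $F := m f$, symmetrization via the pre/post change of variables \eqref{eq:pre_post} together with the cancellation lemma (Proposition \ref{prop:cancellation_lemma}) yields
\[
\la Q(\mu, f), f\ra_{L^2_v(m)}
= - \int B\, \mu_*\, (F' - F)^2 \, \d\sigma\, \d v_*\, \d v
- c \int |v-v_*|^\gamma\, \mu_*\, F^2 \, \d v_*\, \d v + I_3,
\]
where $I_3$ is the weight commutator of Lemma~\ref{lem:commutator}. Item~(3) of Lemma~\ref{lem:prop_anisotropic_norm} applied with $\varphi = \mu$, combined with Lemma~\ref{lem:remove_kin_sing} (admissible since $\mu$ is Schwartz) to replace $|v-v_*|^\gamma$ by $\la v-v_*\ra^\gamma$, bounds the first integral above by $-c\, \|f\|^2_{\dot H^{s, *}_v(m)}$. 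The elementary lower bound $\int |v-v_*|^\gamma \mu_*\, \d v_* \gtrsim \la v\ra^\gamma$ (restrict to $|v_*| \le |v|/2$ when $|v|$ is large; use $\gamma > -3$ near the origin) turns the second integral into $-c\, \|\la v\ra^{\gamma/2} f\|^2_{L^2_v(m)}$. Together these deliver the full coercive piece $-c\, \|f\|^2_{H^{s, *}_v(m)}$.

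The commutator $I_3$ and the subprincipal term $\la Q(f, \mu), f\ra_{L^2_v(m)}$ would then be estimated by Lemma~\ref{lem:commutator} and inequality \eqref{eq:est_Q_f_g_h} respectively, each applied with $\mu$ as a frozen argument so that its Gaussian-weighted Sobolev norms reduce to universal constants. This produces quantities of the form $\|\la v\ra^{\gamma/2} f\|_{L^2_v(m)}\, \|f\|_{H^{s, *}_v(m)}$ and $\|\la v\ra^{\gamma/2} f\|^2_{L^2_v(m)}$. A Young inequality absorbs a small $\eta\, \|f\|^2_{H^{s, *}_v(m)}$ into the coercivity just obtained. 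The leftover $C_\eta \|\la v\ra^{\gamma/2} f\|^2_{L^2_v(m)}$ is handled by a Gualdani--Mischler--Mouhot-type splitting of the fixed Maxwellian, $\mu = \chi_R\, \mu + (1 - \chi_R) \mu$, with $\chi_R$ a smooth cutoff in velocity: the compactly supported part $\chi_R\, \mu$ yields a contribution $\lesssim C(R)\, \|f\|^2_{L^2_v}$, which is precisely the claimed compact $L^2_{x,v}$ remainder, while the tail $(1-\chi_R)\mu$ is exponentially small in $R$ so, for $R$ large enough, its contribution is absorbed back into the dissipation.

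The main obstacle is exactly this last absorption. Because the soft-potential dissipation $\|f\|^2_{H^{s, *}_v(m)}$ controls $\|\la v\ra^{\gamma/2} f\|^2_{L^2_v(m)}$ but \emph{not} the unweighted $\|f\|^2_{L^2_v(m)}$ (intrinsic loss of weight when $\gamma < 0$), the $C_\eta$ term cannot be absorbed by placing a cutoff on $f$: it is the Gaussian decay of $\mu$, not any decay of $f$, that must provide the necessary smallness. The weight hypothesis $k > 7/2 - |\gamma|/2 + 2s$ is what keeps the auxiliary polynomial indices $\ell$ appearing in Lemmas~\ref{lem:kinpot_convo} and~\ref{lem:commutator} as well as in the comparison \eqref{eq:aniso_iso_comp_poly} compatible with the weight $m$, so that the cascade of Young constants remains finite throughout.
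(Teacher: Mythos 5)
Your plan correctly anticipates the first half of the argument: extracting a coercive piece from $\la Q(\mu,F),F\ra_{L^2_v}$ via the pre/post change of variables and the cancellation lemma, and controlling $\la Q(f,\mu),f\ra_{L^2_v(m)}$ plus the weight commutators via He's estimate and Lemma~\ref{lem:commutator}. This is essentially what the paper does in Lemma~\ref{lem:wdissipativity_L_grazing} (``grazing collisions''), and it produces the \emph{intermediate} estimate
\[
\la \LLL f, f\ra_{L^2_v(m)} \le -\,c\,\|f\|^2_{H^{s,*}_v(m)} + C\,\|\la v\ra^{\gamma/2}f\|^2_{L^2_v(m)},
\]
which is \eqref{eq:grazing} --- \emph{not} yet \eqref{eq:dissipative_noncutoff_lot}, because the remainder carries the full weight $m\la v\ra^{\gamma/2}$.

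The gap is in your final step, where you claim that the $C\,\|\la v\ra^{\gamma/2}f\|^2_{L^2_v(m)}$ term can be converted to $C(R)\,\|f\|^2_{L^2_v}$ by splitting $\mu = \chi_R\mu + (1-\chi_R)\mu$. This does not work. The troublesome term no longer involves $\mu$: it is a Young by-product, and its size is genuinely comparable to $\la v\ra^{\gamma}m^2 f^2$ at infinity. If you push the cutoff of $\mu$ \emph{inside} the collision integrals, the pieces built from $(1-\chi_R)\mu$ are indeed exponentially small in $R$ and harmless; but the pieces built from $\chi_R\mu$ reproduce the same estimates with bounded (not small) constants, hence still return a full-weight $\|\la v\ra^{\gamma/2}f\|^2_{L^2_v(m)}$. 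Compactly supporting $\mu_*$ or $\mu'_*$ never localizes the integration in $v$ (the energy identity $|v'|^2+|v'_*|^2=|v|^2+|v_*|^2$ allows $|v|$ arbitrarily large with $|v'_*|$ bounded), so $\la Q(\chi_R\mu,f),f\ra_{L^2_v(m)}$ is \emph{not} controlled by $C(R)\|f\|^2_{L^2_v}$. Proving that it is would require exactly the estimate you are trying to establish, i.e.\ the argument is circular. Note also that the Gualdani--Mischler--Mouhot-style decomposition used later in the paper (Proposition~\ref{prop:w_diss_B}) is a splitting of $\Lambda$ into $\AA=M\chi_R$ (a multiplication operator) plus $\BB$, and its dissipativity \emph{relies on} Proposition~\ref{prop:dissipative_lot}; one cannot reverse that dependence.

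The mechanism the paper actually uses, and which your proposal lacks, is the \emph{angular} cutoff $b = b_\delta + b_\delta^c$. The non-grazing part $\LLL^c_\delta$ behaves like a cutoff operator whose collision frequency $\nu_\delta \approx \delta^{-2s}\la v\ra^\gamma$ blows up as $\delta\to 0$; Lemma~\ref{lem:wdissipativity_L_cutoff} turns this into coercivity $-c\,\delta^{-2s}\|\la v\ra^{\gamma/2}f\|^2_{L^2_v(m)}$ with a genuinely unweighted remainder $C_{\delta,\eps}\|f\|^2_{L^2_v}$ obtained via Lemma~\ref{lem:kinpot_convo_schwartz} applied to the Schwartz function $\mu m$. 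Since the amplification factor $\delta^{-2s}$ is at your disposal, it can be made to overwhelm the fixed constant $C$ from the grazing estimate, and a final interpolation between \eqref{eq:grazing_delta} and \eqref{eq:grazing} closes the proof. Without that $\delta$-tunable parameter, the absorption you need is impossible, because soft potentials provide no room: $\|\la v\ra^{\gamma/2}f\|^2_{L^2_v(m)}$ and $\|f\|^2_{H^{s,*}_v(m)}$ grow at the same polynomial rate at infinity, so a constant in front of the former cannot be beaten by the latter unless it is already small.
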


We introduce a splitting of the angular cross section $b(\cos \theta)$ so as to decompose the linearized operator $\LLL$ as a singular regularizing part and a weakly coercive non-singular part, namely we define for any $\delta \in (0,1]$
\begin{equation*}
	b(\cos \theta) = b(\cos \theta) \mathbf{1}_{|\theta| \le \delta \pi/2} + b(\cos \theta) \mathbf{1}_{|\theta| > \delta \pi/2} =: b_\delta(\cos \theta) + b_\delta^c(\cos \theta),
\end{equation*}
which induces the following splitting of the linearized operator:
\begin{equation*}
	\LLL = \LLL_\delta + \LLL^c_\delta.
\end{equation*}
Denote $\nu_\delta$ the approximate collision frequency defined as
$$
\nu_\delta (v) = \int_{\R^3 \times \S^2}  |v-v_*|^{\gamma} b^c_\delta(\cos \theta) \mu(v_*)  \d\sigma  \d v_*
$$
which satisfies, according to the cutoff case (see for instance~\cite{Glassey96}), for some positive constants~$\nu_{0},\nu_{1} >0$
\begin{gather}
	\label{eq:bounds_coll_freq}
	\nu_{0} \delta^{-2 s} \langle v \rangle^{\gamma} \le \nu_\delta(v) \le \nu_{1} \delta^{-2 s} \langle v \rangle^\gamma, \quad \forall v \in \R^3.
\end{gather}
The cutoff part of the linearized collision operator then splits
\begin{equation}
	\label{eq:def_L}
	\LLL_\delta^c f = - \nu_\delta f + \int_{\R^3 \times \S^2} |v-v_*|^{\gamma} b_\delta^c(\cos \theta)
	\left[ f(v'_*) \mu(v') - f(v_*) \mu(v) + \mu(v'_*) f(v')  \right]  \d\sigma \d v_*.
\end{equation}

\begin{lem}[Non-grazing collisions]
	\label{lem:wdissipativity_L_cutoff}
	Suppose $k > 3/2 + |\gamma|/2 + s$ and let $m = \la v \ra^k$. For any $\delta, \eps \in (0,1]$ there holds
	$$
		\int \left(\LLL^c_\delta g\right) g m^2 \d v \le - c \delta^{-2 s} \| \langle v \rangle^{\gamma/2} g \|_{L^2_{v} (m)}^2 + \eps \| \la v \ra^{\gamma / 2} g \|^2_{H^s_v(m)} + C_{\delta, \eps} \| g \|_{L^2_{v} }^2,
	$$
for some positive constants $c , C_{\delta, \eps}>0$.

\end{lem}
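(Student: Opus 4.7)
The plan is to decompose $\LLL^c_\delta g = -\nu_\delta g + K_\delta g$ with
\begin{equation*}
K_\delta g := \int_{\R^3 \times \S^2} b_\delta^c(\cos\theta)\, |v-v_*|^\gamma \bigl[ \mu'_* g' + g'_* \mu' - g_* \mu \bigr] \d\sigma \d v_*,
\end{equation*}
and to treat the two contributions to $\int (\LLL^c_\delta g)\, g\, m^2 \d v$ separately. The diagonal term immediately yields the coercive bound: by \eqref{eq:bounds_coll_freq},
\begin{equation*}
-\int \nu_\delta g^2 m^2 \d v \le -\nu_0 \delta^{-2s} \int \la v \ra^{\gamma} g^2 m^2 \d v = -\nu_0 \delta^{-2s} \| \la v \ra^{\gamma/2} g \|_{L^2_v(m)}^2,
\end{equation*}
so it remains to show $|\la K_\delta g, g\ra_{L^2_v(m)}| \le \eps \| \la v \ra^{\gamma/2} g\|_{H^s_v(m)}^2 + C_{\delta,\eps} \|g\|_{L^2_v}^2$, which I do piece by piece.

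The key input is Lemma~\ref{lem:kinpot_convo}\,(2): under \eqref{eq:noncutoff2} we have $|\gamma| < 3/2+s$, so for any $\ell > 3/2+s$,
\begin{equation*}
\int_{\R^3} |v-v_*|^\gamma |g(v_*)| \d v_* \lesssim \la v \ra^\gamma\, \|\la v\ra^\ell g\|_{H^s_v}.
\end{equation*}
Picking $\ell \in (3/2+s,\, k+\gamma/2]$, which is possible precisely because $k > 3/2+|\gamma|/2+s$, ensures $\|\la v\ra^\ell g\|_{H^s_v} \le \|\la v\ra^{\gamma/2} g\|_{H^s_v(m)}$. For the loss-type contribution
\begin{equation*}
T_3 := -\int b_\delta^c(\cos\theta)\, |v-v_*|^\gamma g_* \mu\, g\, m^2 \d\sigma \d v_* \d v,
\end{equation*}
integrating in $\sigma$ produces a harmless factor $\lesssim \delta^{-2s}$ (from $\int b_\delta^c \d\sigma \lesssim \delta^{-2s}$); combining this with the above estimate and a Cauchy--Schwarz in $v$ against the Schwartz function $\mu m^2 \la v\ra^\gamma$ yields
\begin{equation*}
|T_3| \lesssim \delta^{-2s}\, \|\la v\ra^{\gamma/2} g\|_{H^s_v(m)}\, \|g\|_{L^2_v},
\end{equation*}
and Young's inequality splits this into the desired $\eps$ and $C_{\delta,\eps}$ pieces.

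For the gain-type contributions $T_1 := \int b_\delta^c |v-v_*|^\gamma \mu'_* g' g\, m^2 \d\sigma \d v_* \d v$ and $T_2 := \int b_\delta^c |v-v_*|^\gamma g'_* \mu' g\, m^2 \d\sigma \d v_* \d v$, the plan is to reduce each to a $T_3$-type integral through the pre-post change of variables \eqref{eq:pre_post}, which moves the Gaussian factors $\mu'_*$, $\mu'$ back to pre-collisional velocities at the cost of turning the weight $m^2(v)$ into $m^2(v')$. The latter is dispatched using \eqref{eq:weight_split_1}, $m(v') \lesssim m(v) + m(v_*)$ (the symmetric form of that estimate, applied to the pair $(v',v'_*) \to (v,v_*)$), together with the collisional identity $\mu(v)\mu(v_*) = \mu(v')\mu(v'_*)$ to redistribute the Gaussian decay back to pre-collisional variables. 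Each resulting sub-piece takes the form of $T_3$ (possibly with an extra polynomial moment on $g$ or a Gaussian factor at $v_*$) and is bounded by the same Lemma~\ref{lem:kinpot_convo}\,(2) plus Young's inequality argument.

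The main obstacle is exactly this reduction for the gain terms: the polynomial weight $m^2(v)$ and the function values $g(v')$, $g(v'_*)$ live at different velocities, and only the right combination of \eqref{eq:pre_post}, \eqref{eq:weight_split_1} and energy conservation recouples them in a form compatible with the $T_3$ analysis. The precise threshold $k > 3/2+|\gamma|/2+s$ in the hypothesis is dictated by Lemma~\ref{lem:kinpot_convo}\,(2), once the kinetic singularity $|v-v_*|^\gamma$ is converted into the polynomial factor $\la v\ra^\gamma$ and absorbed into the weight $m$.
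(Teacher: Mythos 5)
Your treatment of the loss term $T_3$ is correct and matches the paper's handling of its $\Integ_2$: integrate the angular measure to pick up $\delta^{-2s}$, then use the $H^s_v$-version of the convolution estimate (this is \eqref{eq:kinpot_convo_0_3/2_bis}, part~(2) of Lemma~\ref{lem:kinpot_convo}) with $\ell\in(3/2+s,\,k+\gamma/2]$, and close with Young's inequality; this is also where the threshold $k>3/2+|\gamma|/2+s$ comes from, as you say. Your coercivity bound from $\nu_\delta$ is likewise fine.

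The gain terms are where the proposal breaks down. After the pre-post change of variables \eqref{eq:pre_post}, you get
\begin{equation*}
T_1=\int b_\delta^c\,|v-v_*|^\gamma\,\mu_*\,g\,g'\,(m')^2,\qquad T_2=\int b_\delta^c\,|v-v_*|^\gamma\,g_*\,\mu\,g'\,(m')^2,
\end{equation*}
and even after splitting $(m')^2\lesssim m^2+m_*^2$ one $g$-factor is still evaluated at the post-collisional velocity $v'$. This is \emph{not} ``of the form of $T_3$'': the defining feature of $T_3$ is that both $g$-factors sit at pre-collisional velocities $v$ and $v_*$, which is what makes a direct convolution bound against $\mu m^2\la v\ra^\gamma$ possible. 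To decouple $g'$ from $g$ or $g_*$ you need, as in the paper, a further Cauchy--Schwarz in $(v,v_*,\sigma)$ followed by the changes of variables $(v,v')\to(v,v_*)$ and $(v',v_*)\to(v,v_*)$ from Proposition~\ref{prop:change_of_variables}. Also, the identity $\mu\mu_*=\mu'\mu'_*$ does not help here: each gain term carries only one Gaussian factor, so there is nothing to pair it with.

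There is a second, more quantitative issue that your sketch does not address. The change of variables needed to undo the post-collisional evaluation carries a Jacobian with an additional angular singularity $\sin(\pi/2-\theta)^{-2-\gamma}$, and on the support of $b_\delta^c$ this produces the prefactor $\delta^{-1-\gamma/2-2s}$, which is strictly worse than the $\delta^{-2s}$ you need for absorption. The paper compensates exactly for this by invoking Lemma~\ref{lem:kinpot_convo_schwartz} (the Schwartz-weight convolution bound) with the $\delta$-dependent small parameter $\eta=\eps\delta^{1+\gamma/2}$, which cancels the excess power of $\delta$ and lands the gain-term contribution at $\eps\,\delta^{-2s}\|\la v\ra^{\gamma/2}g\|_{L^2_v(m)}^2 + C_{\delta,\eps}\|g\|_{L^2_v}^2$, absorbable into the coercive piece. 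Without this $\delta$-power accounting the gain-term estimate does not close, so the ``reduce to $T_3$ and apply Lemma~\ref{lem:kinpot_convo}(2)+Young'' plan has a genuine gap.
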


\begin{proof}
	Firstly we consider
	\begin{align*}
		\int \left(\LLL_\delta^c g\right) g m^2 \d v = &- \|\nu_\delta^{1/2} g\|_{L^2_v(m)}^2 + \int |v-v_*|^{\gamma} b_\delta^c(\cos \theta)
		g'_* \mu' g m^2 \d\sigma \d v_* \d v \\
		&-  \int |v-v_*|^{\gamma} b_\delta^c(\cos \theta) g_* \mu g m^2 \d\sigma \d v_* \d v\\
		& + \int |v-v_*|^{\gamma} b_\delta^c(\cos \theta) \mu'_* g' g m^2 \d\sigma \d v_*  \d v,
	\end{align*}
	so that, using the bounds \eqref{eq:bounds_coll_freq} on $\nu_\delta$, we have
	\begin{equation*}
		\int \left(\LLL^c_\delta g\right) g m^2 \d v + \nu_{0} \delta^{-2 s} \| \la v \ra^{\gamma/2} g \|_{L^2_{v}(m)}^2 \le \Integ_1 + \Integ_2 + \Integ_3,
	\end{equation*}
with, denoting $G = m g$, 
	\begin{align*}
		\Integ_1 &:= \int |v-v_*|^{\gamma} b_\delta^c(\cos \theta)
		g'_* \mu' G m \d\sigma  \d v_*  \d v ,\\
		\Integ_2 &:= \int |v-v_*|^{\gamma} b_\delta^c(\cos \theta) g_* \mu G m \d\sigma  \d v_* \d v,\\
		\Integ_3 &:= \int |v-v_*|^{\gamma} b_\delta^c(\cos \theta) \mu'_* g' G m \d\sigma  \d v_* \d v.
	\end{align*}
	In Step 1, we prove that the terms $\Integ_1$ and $\Integ_3$ satisfy the bound
	\begin{gather}
		\label{eq:goal_bound}
		\Integ_1 + \Integ_3 \lesssim \delta^{-1 - \gamma/2 - 2 s} \int |v-v_*|^\gamma \,  G^2 \, \varphi_*  \d v   \d v_*,
	\end{gather}
	where $\varphi$ denotes a Schwartz function (typically of the form $\mu^{a} \la v \ra^b$) from which we will deduce using Lemma \ref{lem:kinpot_convo_schwartz} with $\eta = \eps \delta^{1 + \gamma/2}$ that there holds
	\begin{equation}
		\label{eq:goal_bound_2}
		\Integ_1 + \Integ_3 \le \eps \delta^{-2 s} \| \la v \ra^{\gamma/2} g \|_{L^2_v ( m )}^2 + C_{\eps, \delta} \|g \|_{L^2_v}^2.
	\end{equation}
	In Step 2, we will prove that $\Integ_2$ satisfies 
	\begin{equation*}
		\Integ_2 \le \eps \delta^{-2 s} \| \la v \ra^{\gamma / 2} g \|_{H^s_v(m)} + C_{\eps, \delta} \| g \|_{L^2_v}^2,
	\end{equation*}
	so that taking $\eps$ small enough, we obtain
	$$\Integ_1 + \Integ_2 + \Integ_3 \le \frac{\nu_0}{2} \delta^{-2 s} \| \la v \ra^{\gamma / 2} g \|_{L^2_v(m)}^2 + \eps' \| \la v \ra^{\gamma / 2} g \|_{H^s_v(m)}^2 + C_{\delta, \eps} \| g \|_{L^2_v}^2,$$
	where $\eps'$ is arbitrarily small. This will indeed prove the lemma by taking~$\eps'$ small enough.

	\step{2}{Proof of \eqref{eq:goal_bound} for $\Integ_1$ and $\Integ_3$}
	We start by splitting $\Integ_1$ using \eqref{eq:weight_split_1}:
	\begin{align*}
		\Integ_1 = &\int |v-v_*|^{\gamma} b_\delta^c(\cos \theta) g'_* \mu' m G \d \sigma \d v_* \d v\\
		\lesssim & \int |v-v_*|^{\gamma} b_\delta^c(\cos \theta) \left(\la v \ra g\right)'_* (\mu m)' G \d \sigma \d v_* \d v\\
		& +  \int |v-v_*|^{\gamma} b_\delta^c(\cos \theta) G'_* \mu' G \d \sigma \d v_* \d v =: \Integ_{11}+\Integ_{12},
	\end{align*}
	Rewriting $\Integ_{11}$ thanks to \eqref{eq:pre_post}, using the Cauchy-Schwarz inequality, then integrating in $\sigma$ one obtains
	\begin{align*}
		\Integ_{11} =& \int |v-v_*|^{\gamma} b_\delta^c(\cos \theta) \left(\la v \ra g\right)_* (\mu m) G' \,  \d \sigma \d v_* \d v \\
		\leq &\left(\int |v-v_*|^{\gamma} b_\delta^c(\cos \theta) \left(\la v \ra g\right)_*^2 (\mu m) \,  \d \sigma \d v_* \d v\right)^{1/2} \\
		& \times \left(\int |v-v_*|^{\gamma} b_\delta^c(\cos \theta) \left(\mu m\right) (G')^2 \,  \d \sigma \d v_* \d v\right)^{1/2}.
	\end{align*}
	Using the change of variables \eqref{eq:vstar_to_vprime} in the post-factor, then integrating in $\sigma$ we get
	\begin{align*}
		\Integ_{11} \lesssim & \, \left( \int b^c_\delta(\cos \theta) |v-v_*|^{\gamma} \left(\la v \ra g\right)_*^2 (\mu m) \d v_* \d v\right)^{1/2} \\
		& \times \left(\int b^c_\delta(\cos (\pi - 2 \theta)) |v-v_*|^{\gamma} \left(\pi - 2 \theta\right)^{- 2 - \gamma} (\mu m) G_*^2 \d v_* \d v\right)^{1/2}\\
		\lesssim & \, \delta^{-1 -\gamma/2 -2 s } \int |v-v_*|^{\gamma} (\mu m) G_*^2 \d v_* \d v.
	\end{align*}
	To bound the part $\Integ_{12}$, we start again with the Cauchy-Schwarz inequality:
	\begin{align*}
		\Integ_{12} = &  \int |v-v_*|^{\gamma} b_\delta^c(\cos \theta) G'_* \mu' G \d \sigma \d v_* \d v \\
		\le & \left(\int |v-v_*|^{\gamma} b_\delta^c(\cos \theta) (G'_*)^2 \mu' \d \sigma \d v_* \d v \right)^{1/2} \\
		& \times \left(\int |v-v_*|^{\gamma}  b_\delta^c(\cos \theta) \mu' G^2 \d \sigma \d v_* \d v \right)^{1/2}.
	\end{align*}
	Up to the pre-post change of variables \eqref{eq:pre_post} in the prefactor, this term is dealt with in the same way as $\Integ_{11}$. Similar computations (using \eqref{eq:v_to_vprime} this time) lead to
	$$\Integ_3 \lesssim \delta^{-2 s} \int |v-v_*|^\gamma (\mu m) G_*^2 \d v_* \d v.$$
	This concludes this step.
	
	\step{2}{Proof of \eqref{eq:goal_bound_2} for $\Integ_2$}
	For $\Integ_2$, we integrate in $\sigma$ to get the factor $\delta^{-2s}$, then in $v_*$ using the estimate \eqref{eq:kinpot_convo_0_3/2} with the power $\theta = k + \gamma/2 > 3/2 + s$, which yields
	\begin{align*}
		\Integ_2 &=  \int b^c_\delta (\cos \theta) |v-v_*|^{\gamma} g_* \mu G \d v_* \d v \\
		&\lesssim  \delta^{-2 s} \|\la v \ra^{\theta} g \|_{H^s_v} \int\la v \ra^\gamma \mu G \d v\\
		&\lesssim  \eps \delta^{-2 s} \|\la v \ra^{\gamma/2} g \|_{H^s_v(m)} + C_{\eps, \delta} \left\| g \right\|_{L^2_v},
	\end{align*}
	where the last line comes from Young's inequality.
\end{proof}

\begin{lem}[Grazing collisions]
	\label{lem:wdissipativity_L_grazing}
	Let $k > 13/2 + 2|\gamma|$ and define $m = \la v \ra^k$. There exists some $c > 0$ such that for any $\eps > 0$ and $\delta > 0$
	\begin{gather}
		\label{eq:grazing_delta}
		\la \LLL_\delta f , f \ra_{L^2_v(m)} \le \eps \| f \|_{H^{s, *}}^{2} + C_\eps \| \la v \ra^{\gamma / 2} f \|^2_{L^2_v (m) },\\
		\label{eq:grazing}
		\la \LLL f , f \ra_{L^2_v(m)} \le - c \| f \|_{ H^{s, *}_v (m) }^2 + C \| \la v \ra^{\gamma / 2} f \|^2_{L^2_v (m) },
	\end{gather}
	for some $C_\eps > 0$.
\end{lem}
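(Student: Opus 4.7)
The plan is to handle both estimates \eqref{eq:grazing_delta} and \eqref{eq:grazing} within one computational framework by writing $\LLL=Q(\mu,\cdot)+Q(\cdot,\mu)$ (and $\LLL_\delta$ analogously with $B$ replaced by $B_\delta$) and applying the homogeneous machinery of Section~\ref{sec:est_hom_nc}. The two estimates share the same decomposition: \eqref{eq:grazing} is the coercive upgrade obtained by keeping the negative anisotropic contribution of the quadratic form, while \eqref{eq:grazing_delta} is a looser upper bound obtained by simply discarding that nonpositive contribution.

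For the term $\la Q(\mu,f),f\ra_{L^2_v(m)}$, I would set $F:=mf$ and split
$$\la Q(\mu,f),f\ra_{L^2_v(m)}=\la Q(\mu,F),F\ra_{L^2_v}+\Big(\la Q(\mu,f),f\ra_{L^2_v(m)}-\la Q(\mu,F),F\ra_{L^2_v}\Big),$$
then apply the pre-post change of variables \eqref{eq:pre_post} and the Cancellation Lemma~\ref{prop:cancellation_lemma} to the first piece (exactly as in the decomposition $\Integ_1+\Integ_2+\Integ_3$ of the proof of Proposition~\ref{prop:est_inhom_Q_g_f_f}) to obtain
$$\la Q(\mu,F),F\ra_{L^2_v}=-\tfrac{1}{2}\int B\,\mu_*(F'-F)^2\,\d\sigma\,\d v_*\,\dv-c\int|v-v_*|^\gamma\mu_*F^2\,\d v_*\,\dv.$$
By Lemma~\ref{lem:prop_anisotropic_norm} (item (2) with $\varphi=\mu$), the first term equals $-c'\|f\|^2_{\dot H^{s,*}_v(m)}$ up to an error controlled by $\|\la v\ra^{\gamma/2}f\|^2_{L^2_v(m)}$; by Lemma~\ref{lem:kinpot_convo} the cancellation term is dominated by $\|\la v\ra^{\gamma/2}f\|^2_{L^2_v(m)}$; and the commutator is bounded via Lemma~\ref{lem:commutator} applied with $g=\mu$. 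Since $\mu$ is Schwartz every $\mu$-norm is finite, so Young's inequality splits the resulting product as $\eta\|f\|^2_{H^{s,*}_v(m)}+C_\eta\|\la v\ra^{\gamma/2}f\|^2_{L^2_v(m)}$.

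The second term $\la Q(f,\mu),f\ra_{L^2_v(m)}$ is bounded directly by the trilinear estimate \eqref{eq:est_Q_f_g_h} of Proposition~\ref{prop:Q-L2v} with $g=\mu$ and $h=f$; again every $\mu$-factor is a finite constant and Young's inequality produces $\eta\|f\|^2_{H^{s,*}_v(m)}+C_\eta\|\la v\ra^{\gamma/2}f\|^2_{L^2_v(m)}$. Summing these contributions and choosing $\eta$ small enough to absorb all $\eta\|f\|^2_{H^{s,*}_v(m)}$ terms into the negative anisotropic dissipation $-c'\|f\|^2_{\dot H^{s,*}_v(m)}$, and using the definition \eqref{eq:def-Hs*} to reconstruct $-c\|f\|^2_{H^{s,*}_v(m)}$ from $-c'\|f\|^2_{\dot H^{s,*}_v(m)}$ together with the controlled $\|\la v\ra^{\gamma/2}f\|^2_{L^2_v(m)}$, yields \eqref{eq:grazing}.

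For the grazing estimate \eqref{eq:grazing_delta} I would repeat the same decomposition with $B_\delta$ everywhere. Now the nonpositive quadratic form $-\tfrac{1}{2}\int B_\delta\mu_*(F'-F)^2$ is simply dropped (no coercivity is claimed on the left), while the cancellation constant $c_\delta\approx\delta^{2-2s}$ provided by Proposition~\ref{prop:cancellation_lemma} gives a term trivially bounded by $\|\la v\ra^{\gamma/2}f\|^2_{L^2_v(m)}$. The commutator and $Q_\delta(f,\mu)$ pieces are treated as above, using the pointwise majoration $b_\delta\le b$ so that every $\mu$-norm is bounded by its full-kernel counterpart; Young's inequality with parameter $\eps$ then produces the claimed splitting with $C_\eps$ depending on $\eps$ but not on $\delta$. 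The main obstacle is precisely this $\delta$-independence of $C_\eps$: one must resist the temptation to exploit grazing smallness in the commutator and $Q_\delta(f,\mu)$ bounds (which would reintroduce $\delta$-dependence), and instead extract all the $\eps$-smallness from Young's inequality applied to the full-kernel bounds.
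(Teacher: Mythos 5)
Your proposal follows the paper's proof essentially step by step: split $\LLL_\delta f = Q_\delta(\mu,f)+Q_\delta(f,\mu)$, extract the coercive piece from $\la Q_\delta(\mu,F),F\ra_{L^2_v}$ via the pre-post change of variables together with the Cancellation lemma, recognize the resulting negative quadratic form as $-\|\cdot\|_{\dot H^{s,*}_v(m)}^2$ (up to acceptable error) via Lemma~\ref{lem:prop_anisotropic_norm}, absorb the weight commutator with Lemma~\ref{lem:commutator}, and obtain $\delta$-uniform constants from the pointwise bound $b_\delta\le b$ plus $C_\delta\approx\delta^{2-2s}\lesssim1$. The one genuine (minor) deviation is your treatment of $\la Q(f,\mu),f\ra_{L^2_v(m)}$: you apply the trilinear estimate \eqref{eq:est_Q_f_g_h} directly with $g=\mu$, $h=f$, whereas the paper splits it as $\la Q_\delta(f,m\mu),F\ra_{L^2_v}$ (handled by He's estimate, Lemma~\ref{lem:nonlinear_He}) plus a second commutator $R_3$; both routes close by Young since every $\mu$-norm is finite. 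A few small labelling slips worth fixing: the commutator for $\la Q(\mu,f),f\ra_{L^2_v(m)}$ is Lemma~\ref{lem:commutator} applied with $\mu$ in the \emph{first} slot (not ``$g=\mu$''); the cancellation term appears with a \emph{positive} coefficient $+C_\delta$, not $-c$, so it must be upper-bounded (as you correctly do using Lemma~\ref{lem:kinpot_convo}); and matching $-\frac12\int B\,\mu_*(F'-F)^2$ to Lemma~\ref{lem:prop_anisotropic_norm} first requires the elementary lower bound $|v-v_*|^\gamma\ge\la v\ra^\gamma\la v_*\ra^\gamma$ (so it is item~(2) with $\varphi=\mu\la v\ra^\gamma$, or equivalently item~(3) with $\varphi=\mu$), not item~(2) with $\varphi=\mu$ directly.
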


\begin{proof}
	We start by splitting the Dirichlet form using commutators:
	\begin{align*}
		\la \LLL_\delta f, f \ra_{L^2_v(m)} & = \la Q_\delta (\mu, f), f \ra_{L^2_v(m)} + \la Q_\delta (f, \mu), f \ra_{L^2_v(m)} \\
		&= \la Q_\delta (\mu, F), F \ra_{L^2_v} + R_1 + R_2 + R_3,
	\end{align*}
	where we denoted
	\begin{gather*}
		R_1 := \la Q_\delta (f, m \mu), F \ra_{L^2_v},\\
		R_2 := \la Q_\delta (\mu, f), f \ra_{L^2_v(m)} - \la Q_\delta(\mu, F), F \ra_{L^2_v},\\
		R_3 := \la Q_\delta (f, \mu), f \ra_{L^2_v(m)} - \la Q_\delta(f, m \mu), F \ra_{L^2_v}.
	\end{gather*}
	The first term is estimated using Lemma \ref{lem:nonlinear_He} (where we choose $w_1 = \gamma / 2 + 2 s$, $w_2 = \gamma /2$ for the weights, $a=2s$ and $b=0$ for the derivatives):
	\begin{equation*}
		R_1 \lesssim \| \la v \ra^{\gamma / 2} f \|_{L^2_v(m)}^2,
	\end{equation*}
	and the two other ones using Lemma \ref{lem:commutator} and Young's inequality:
	\begin{align*}
		R_2 + R_3 &\le C \left(\| \la v \ra^{\gamma / 2} f \|_{L^2_v(m)}^2 + \| \la v \ra^{\gamma / 2} f \|_{L^2_v(m)} \| f \|_{H^{s, *}_v(m) }\right) \\
		& \le C_\eps \| \la v \ra^{\gamma / 2} f \|_{L^2_v(m)}^2 + \eps \| f \|_{H^{s, *}_v(m) }^2.
	\end{align*}
	We then focus on the first term which provides the anisotropic dissipation $\dot{H}^{s, *}_v (m)$:
	\begin{align*}
		\la Q_\delta(\mu, F), F \ra_{L^2_v(m) } =& \int B_\delta(v-v_*, \sigma) (\mu'_* F' - \mu_* F) F \d \sigma \d v_* \d v\\
		=& \frac{1}{2} \int B_\delta(v-v_*, \sigma) (2 \mu'_* F' F - \mu_* F^2 - \mu_*' F^2)\d \sigma \d v_* \d v \\
		& + \frac{1}{2} \int B_\delta(v-v_*, \sigma) (\mu'_* - \mu_*) F^2 \d \sigma \d v_* \d v.
	\end{align*}
	We use \eqref{eq:pre_post} to change the term $\mu'_* F^2$ of the first integral into $\mu_* (F')^2$, and the cancellation lemma (Proposition \ref{prop:cancellation_lemma}) in the second integral:
	\begin{align*}
		\la Q_\delta(\mu, F), F \ra_{L^2_v(m) } = -& \frac{1}{2} \int B_\delta(v-v_*, \sigma)  \mu'_* ( F' - F)^2 \d \sigma \d v_* \d v \\
		&+ C_\delta \int |v -v_*|^\gamma \mu_* F^2 \d v_* \d v,
	\end{align*}
	where $C_\delta \lesssim 1$. We thus have in virtue of \eqref{eq:kinpot_convo_0_3/2}
	\begin{equation*}
		\la Q_\delta(\mu, F), F \ra_{L^2_v(m) } + \frac{1}{2} \int B_\delta(v-v_*, \sigma)  \mu'_* ( F' - F)^2 \d \sigma \d v_* \d v \lesssim \| \la v \ra^{\gamma/2} f\|^2_{L^2_v(m)}.
	\end{equation*}
	Next, using \eqref{eq:pre_post}, and then $|v - v_*| \le \la v \ra \la v_* \ra$ combined with the fact that $\gamma \le 0$, we have
	\begin{align*}
		-\int B_\delta(v-v_*, \sigma)  \mu'_* & ( F' - F)^2 \d \sigma \d v_* \d v \\
		=& - \int b_\delta(\cos \theta) |v-v_*|^\gamma \mu_* ( F' - F)^2 \d \sigma \d v_* \d v\\
		\le & -\int b_\delta(\cos \theta) \la v \ra^\gamma \left(\mu \la v \ra^{-\gamma}\right)_* ( F' - F)^2 \d \sigma \d v_* \d v.
	\end{align*}
	One shows as in the proof of $(1) \approx (2)$ from Lemma \ref{lem:prop_anisotropic_norm} that
	\begin{align*}
		-\int B_\delta(v-v_*, \sigma)  \mu'_* ( F' - F)^2 \d \sigma \d v_* \d v + \frac{1}{2} \int b_\delta(\cos \theta) & \left(\mu \la v \ra^{\gamma}\right)_* ( \FF' - \FF)^2 \d \sigma \d v_* \d v\\
		 \lesssim & \| \la v \ra^{\gamma/2} f\|^2_{L^2_v(m)}.
	\end{align*}
	Together with the previous estimates, we conclude that
	\begin{align*}
		\la \LLL_\delta f, f \ra_{L^2_v(m)} \le & - \frac{1}{2} \int b_\delta(\cos \theta)\left(\mu \la v \ra^{\gamma}\right)_* ( \FF' - \FF)^2 \d \sigma \d v_* \d v \\
		& + \eps \| f \|^2_{H^{s, *}_v(m) } + C_\eps \| \la v \ra^{\gamma / 2} f \|^2_{L^2_v(m)}.
	\end{align*}
	The second term being non-positive, we conclude that \eqref{eq:grazing_delta} holds.

Furthermore, this proof works when replacing $b_\delta$ by $b$ (which corresponds, in a way, to taking $\delta$ large), thus for $\LLL$ we get  
\begin{align*}
		\la \LLL f, f \ra_{L^2_v(m)} \le & - \frac{1}{2} \int b(\cos \theta)\left(\mu \la v \ra^{\gamma}\right)_* ( \FF' - \FF)^2 \d \sigma \d v_* \d v \\
		& + \eps \| f \|^2_{H^{s, *}_v(m) } + C_\eps \| \la v \ra^{\gamma / 2} f \|^2_{L^2_v(m)}.
\end{align*}
Recalling the definition of the norm $\dot H^{s, *}_v(m)$ in \eqref{eq:def-dotHs*}, we therefore deduce that \eqref{eq:grazing} also holds by taking $\eps$ small enough. 
\end{proof}

We are now able to complete the proof of Proposition \ref{prop:dissipative_lot}.

\begin{proof}[Proof of Proposition \ref{prop:dissipative_lot}]
	We get from Lemmas \ref{lem:wdissipativity_L_cutoff} and \eqref{eq:grazing_delta} that for $\delta$ small enough
	\begin{align*}
		\la \LLL f, f \ra_{L^2_v(m)} &= \la \LLL_\delta^c f, f \ra_{L^2_v(m)} + \la \LLL_\delta f, f \ra_{L^2_v(m)} \\
		&\le   \eps \| f \|_{\dot{H}^{s, *}_v (m) }^2 + C_\eps \| \la v \ra^{\gamma / 2} f \|_{L^2_v(m)} 
		 - c \delta^{- 2 s} \| \la v \ra^{\gamma / 2} f \|_{L^2_v(m)}^2 + C \| f \|_{L^2_v}^2   \\
		&\le  \, \eps \| f \|_{\dot{H}^{s, *}_v (m)}^2 - \| \la v \ra^{\gamma / 2} f \|_{L^2_v(m) }^2 + C \| f \|_{L^2_v(m)}^2.
	\end{align*}
	We interpolate this estimate with \eqref{eq:grazing}: for any $\theta \in [0, 1]$
	\begin{align*}
		\la \LLL f, f \ra_{L^2_v(m)} & \le  \left[\theta \eps - (1 - \theta) c\right] \| f \|_{\dot{H}^{s, *}_v (m)}^2 \\
		&\quad + \left[- \theta +(1 - \theta) C\right] \| \la v \ra^{\gamma / 2} f \|_{L^2_v(m)} + \theta C \| f \|_{L^2_v}^2.
	\end{align*}
	We deduce \eqref{eq:dissipative_noncutoff_lot} by taking $\theta$ close enough to 1, $\eps$ small enough, and integrating in space.
\end{proof}

\subsection{Estimates on $\LLL - v \cdot \nabla_x$ in exponentially weighted spaces}

We present here dissipativity estimates for the full linearized operator $\Lambda = \LLL - v \cdot \nabla_x $ in the space $L^2_v\left( \mu^{-1/2} \right)$ and its weighted counterpart~$L^2_v\left( \la v \ra^q \mu^{-1/2} \right)$.  These estimates were initially proved in the works \cite{Strain, AMUXY2012}, but we formulate them in a manner compatible with our approach. 

As in \cite{Strain} let us introduce the following bilinear symmetric form using the notations introduced in \eqref{eq:pif}:
\begin{equation*}
	\begin{aligned}
		\Psi[ f ,  g ] (\xi)
		&:=  \frac{\eta_1 \mathrm{i}  }{1+|\xi|^2}  \xi \theta[\widehat f(\xi)]  \cdot \Upsilon[ \widehat g^\perp(\xi)]  
		+  \frac{\eta_1 \mathrm{i} }{1+|\xi|^2}   \xi \theta[\widehat g(\xi)] \cdot \Upsilon[ \widehat f^\perp (\xi)] \\
		&\quad 
		+  \frac{\eta_2\mathrm{i}}{1+|\xi|^2} (\xi \otimes u[\widehat f(\xi)] )^{\mathrm{sym}} : \left\{ \Theta[\widehat g^\perp(\xi)] + \theta[\widehat g(\xi)] \id  \right\}\\
		&\quad 
		+  \frac{\eta_2\mathrm{i}}{1+|\xi|^2} (\xi \otimes u[\widehat g(\xi)] )^{\mathrm{sym}} : \left\{\Theta[\widehat f^\perp(\xi)] + \theta[\widehat f(\xi)] \id \right\}\\
		&\quad 
		+  \frac{\eta_3\mathrm{i} }{1+|\xi|^2} \xi \rho[\widehat f(\xi)] \cdot  u[\widehat g(\xi)] 
		+  \frac{\eta_3\mathrm{i} }{1+|\xi|^2} \xi \rho[\widehat g(\xi)] \cdot  u[\widehat f(\xi)] 
	\end{aligned}
\end{equation*}
with $0 < \eta_3 \ll \eta_2 \ll \eta_1 \ll 1$, for any $\xi \in \R^3$, where $\id$ is the $3 \times 3$ identity matrix and
$$
\Upsilon [f] = \int_{\R^3} v (|v|^2-5) f \dv, \qquad 
\Theta[f] = \int_{\R^3} \left(v \otimes v - \id\right) f \dv,
$$
and where for vectors $a, b \in \R^3$ and matrices $A, B \in \R^{3 \times 3}$, we denoted
$$(a \otimes b)^{\mathrm{sym}} = \frac12 (a_j b_k + a_kb_j)_{1 \le j,k \le 3}, \qquad A : B = \sum_{j,k=1}^3 A_{jk} B_{jk}.$$
We use $\Psi$ to define the following inner product $\dlla \cdot, \cdot \drra_{L^2_v ( \la v \ra^q \mu^{-1/2} )}$ (with respect to the variable $v$) for some small enough $\kappa > 0$ for any functions $f=f(x, v)$ and $g=g(x, v)$, and any $\xi \in \R^3$ as
\begin{equation}
	\label{eq:equivalent_inner_product}
	\begin{aligned}
		\dlla \widehat f(\xi), \widehat g (\xi) \drra_{L^2_v \left( \la v \ra^q \mu^{-1/2} \right)} 
		:= & \la \widehat f(\xi), \widehat g (\xi) \ra_{L^2_v\left( \mu^{-1/2} \right)} + \Re \Psi[f, g](\xi) \\
		& + \kappa \mathbf{1}_{| \xi | \le 1} \la \widehat f^\perp (\xi), \widehat g^\perp(\xi) \ra_{ L^2_v \left( \la v \ra^q \mu^{-1/2} \right) } \\
		& + \kappa \mathbf{1}_{| \xi | \ge 1} \la \widehat f(\xi), \widehat g (\xi) \ra_{ L^2_v \left( \la v \ra^q \mu^{-1/2} \right) }.
	\end{aligned}
\end{equation}

The next lemma states dissipativity estimates for this inner product.
\begin{lem}
	\label{lem:dissipative_equivalent_gauss}
	Let $q \ge 0$ be fixed. Denote by $\Nt \cdot \Nt_{L^2_v\left ( \la v \ra^q \mu^{-1/2} \right)}$ the norm associated with the inner product $\dlla \cdot, \cdot \drra_{L^2_v \left(\la v \ra^q \mu^{-1/2} \right)}$. The following holds for some constants $C_q > 0$ and~$\lambda_q> 0$.
	\begin{enumerate}[leftmargin=*]
		\item The norm $\Nt \cdot \Nt_{ L^2_v \left( \la v \ra^q \mu^{-1/2} \right) }$ is equivalent to the natural one, and more precisely in Fourier variables, there holds uniformly in $\xi \in \R^3$
		\begin{equation*}
			\frac{1}{C_q} \| \widehat{f}(\xi) \|^2_{L^2_v \left( \la v \ra^q \mu^{-1/2} \right) } \le \Nt \widehat{f}(\xi) \Nt^2_{ L^2_v \left( \la v \ra^q \mu^{-1/2} \right) } \le C_q \| \widehat{f}(\xi) \|^2_{L^2_v \left( \la v \ra^q \mu^{-1/2} \right) } .
		\end{equation*}
		\item The full linearized operator in Fourier variables satisfies the dissipativity estimate
		$$
		\Re\dlla (\LLL - i v \cdot \xi) \widehat{f}(\xi), \widehat{f}(\xi) \drra_{L^2_v \left( \la v \ra^q \mu^{-1/2} \right)}
		\le - \lambda_q \| \widehat{f}(\xi) \|_{H^{s,**}_v\left( \la v \ra^q \mu^{-1/2} \right)}^2
		$$
		where we defined the ($\xi$-dependent) norm $H^{s,**}_v\left( \la v \ra^q \mu^{-1/2} \right)$ as
		\begin{equation}
			\label{eq:def_H_s_**}
			\| \widehat{f}(\xi) \|_{H^{s,**}_v\left( \la v \ra^q \mu^{-1/2} \right)}^2 =
			\| \widehat{f}^\perp(\xi) \|_{H^{s,*}_v( \la v \ra^q \mu^{-1/2})}^2 + \frac{|\xi|^2}{1+|\xi|^2} \| \pi \widehat{f}(\xi) \|_{L^2_v(\mu^{-1/2})}^2.
		\end{equation}
	\end{enumerate}	
\end{lem}

	\begin{proof}
		The norm it induces is equivalent to the natural one of $L^2\left( \la v \ra^q \mu^{-1/2} \right)$ since
		$$\left| \Psi[f, f](\xi) \right| \lesssim \eta_1 \| \widehat{f}(\xi) \|_{L^2_v \left( \la v \ra^q \mu^{-1/2} \right) }^2,$$
		uniformly in $\xi \in \R^3$, where we recall that $\eta_1$ is assumed to be small, and also because
		$$\| \widehat{f}(\xi) \|_{L^2_v\left( \mu^{-1/2} \right)} \ge \| \pi \widehat{f}(\xi) \|_{L^2_v \left( \mu^{-1/2} \right)} \approx \| \pi \widehat{f}(\xi) \|_{L^2_v \left( \la v \ra^q \mu^{-1/2} \right)}.$$
		The dissipativity comes from estimates established in \cite[Section 2.2]{Strain}, which we recall below in a crude but sufficient form; the hypocoercive estimate \cite[(2.20)]{Strain} for some $\alpha > 0$:
		\begin{align*}
			\forall \xi \in \R^3, \quad
			\Re\big\la \big(\LLL - i v \cdot \xi\big) \widehat{f}(\xi), & \widehat{f}(\xi) \big\ra_{ L^2_v \left( \mu^{-1/2} \right) } + \Re \Psi\left[ \left( \LLL - i v \cdot \xi \right) \widehat{f}(\xi), \widehat{f}(\xi) \right](\xi) \\
			& \le - \alpha \| \widehat{f}^\perp(\xi) \|_{ H^{s, *} \left( \mu^{-1/2}  \right) }^2 - \alpha \frac{| \xi |^2}{1 + | \xi |^2} \| \pi \widehat{f}(\xi) \|^2_{ L^2_v \left( \mu^{-1/2} \right) }.
		\end{align*}
		as well as the weighted degenerate estimates \cite[(2.9)]{Strain} and \cite[(2.12)]{Strain} (with $g=0$)
		\begin{align*}
			\forall | \xi | \le 1, \quad \Re\Big\la \left\{ \big(\LLL - i v \cdot \xi\big)  \widehat{f}(\xi) \right\}^\perp , \,  & \widehat{f}^\perp(\xi) \Big\ra_{ L^2_v \left( \la v \ra^q \mu^{-1/2} \right) } + \alpha \|  \widehat{f}^\perp(\xi) \|^2_{ H^{s, *}_v \left( \la v \ra^q \mu^{-1/2} \right) } \\
			& \le C \| \widehat{f}^\perp(\xi) \|^2_{ H^{s, *}_v \left( \mu^{-1/2} \right) } + C | \xi |^2 \| \pi \widehat{f}(\xi) \|^2_{ L^2_v \left( \mu^{-1/2} \right) },
		\end{align*}
		\begin{align*}
			\forall \xi \in \R^3, \quad \Re\big\la \big(\LLL - i v \cdot \xi\big) \widehat{f}(\xi), \widehat{f}(\xi) \big\ra_{ L^2_v \left( \la v \ra^q \mu^{-1/2} \right) } & + \alpha \|  \widehat{f}^\perp(\xi) \|^2_{ H_v^{s, *}\left( \la v \ra^q \mu^{-1/2} \right) } \\
			& \le C \| \widehat{f}(\xi) \|^2_{ H^{s, *}_v \left( \mu^{-1/2} \right) }.
		\end{align*}
		It is clear that for $\kappa$ small enough, the dissipativity estimate for $\dlla \cdot, \cdot \drra_{L^2_v ( \la v \ra^q \mu^{-1/2} )}$ holds:
		$$
		\Re\dlla (\LLL - i v \cdot \xi) \widehat{f}(\xi), \widehat{f}(\xi) \drra_{L^2_v \left( \la v \ra^q \mu^{-1/2} \right)}
		\le - \lambda_q \| \widehat{f}(\xi) \|_{H^{s,**}_v\left( \la v \ra^q \mu^{-1/2} \right)}^2.
		$$
		This concludes the proof.
	\end{proof}

We now introduce the exponentially weighted Sobolev spaces $\spg_q$ and $\disg_q$, with $q \ge 0$, as the spaces associated to the norms
\begin{equation}\label{eq:def-bfE}
	\| f \|_{\spg_q}^2 :=  \| \la v \ra^q f \|_{L^2_x L^2_v ( \mu^{-1/2} ) }^2 + \| \la v \ra^q \nabla_x^3 f \|_{L^2_x L^2_v ( \mu^{-1/2} ) }^2  
\end{equation}
and, respectively,
\begin{equation}\label{eq:def-bfE*}
	\| f \|_{\disg_q}^2 := \| \nabla_x \pi f \|_{ H^2_x L^2_v ( \mu^{-1/2} ) }^2 + \| \la v \ra^q f^\perp \|_{L^2_x E^*_v }^2 + \| \la v \ra^q \nabla^3_x f^\perp \|_{L^2_x E^*_v }^2,
\end{equation}
where the anisotropic space $E^*_v$ is defined by (see \cite{AMUXY2012}, or the equivalent norm of \cite{GS2011_2})
\begin{gather}
	\| f \|_{E^*_v}^2 := \| \la v \ra^{\gamma / 2 + s} f \|^2_{L^2_v ( \mu^{-1/2} ) } + \int |v-v_*|^\gamma b(\cos \theta ) \mu_* (\FF' - \FF)^2 \d \sigma \d v_* \d v, \label{eq:def-E*}
\end{gather}
where we denoted $\FF := \FF(v) = \mu^{-1/2}(v) f(v)$ and $\FF' = \FF(v')$. Let us recall that this norm can be compared to isotropic Sobolev norms \cite[Proposition 2.2]{AMUXY2012}:
\begin{equation*}
	\| \la v \ra^{\gamma / 2 + s} f \|_{L^2_v ( \mu^{-1/2} ) }  + \| \la v \ra^{\gamma/2 } f \|_{H^{s}_v(\mu^{-1/2})} \lesssim \| f \|_{E^*_v} \lesssim \| \la v \ra^{\gamma/2 + s} f \|_{H^{s}_v ( \mu^{-1/2} )}.
\end{equation*}

Using \eqref{eq:equivalent_inner_product}, we can define
the new inner product $ \dlla \cdot, \cdot \drra_{\spg_q}$ on $\spg_q$ by
\begin{equation}\label{eq:def-bfE-equivalent}
\dlla f,g \drra_{\spg_q} = \int_{\R^3}  (1+|\xi|^6) \dlla \widehat f(\xi), \widehat g (\xi) \drra_{L^2_v (\la v \ra^q \mu^{-1/2})}  \d \xi,
\end{equation}
with associated norm $\Nt \cdot \Nt_{\spg_q}$. Observing that $\| \cdot \|_{\disg_q}^2$ is equivalent to
\begin{equation}\label{eq:def-bfE*-equivalent}
\int_{\R^3}  (1+|\xi|^6) \| \widehat f(\xi) \|_{H^{s,**}_v (\la v \ra^q \mu^{-1/2})}^2  \d \xi,
\end{equation}
where we recall that $H^{s, **}_v$ is defined in \eqref{eq:def_H_s_**}, as a direct consequence of Lemma~\ref{lem:dissipative_equivalent_gauss} we obtain a dissipativity-type estimate for $\LLL - v \cdot \nabla_x$ in the space $\spg_q$:

\begin{cor}\label{cor:coercivity_Lambda_gauss}
Let $q \ge 0$ be fixed. The norm $\Nt \cdot \Nt_{\spg_q}$ is equivalent to the natural one $\| \cdot \|_{\spg_q}$, and the full linearized operator satisfies, for some $\lambda>0$ and any smooth enough function $f$,
$$
\dlla (\LLL - v \cdot \nabla_x) f , f \drra_{\spg_q}
\le - \lambda \| f \|_{\disg_q}^2.
$$

\end{cor}

\subsection{Estimates on $\BB-v \cdot \nabla_x$ in polynomially weighted spaces}

We already know from above subsection that in the gaussian spaces $\spg_q$, the full linearized operator $\Lambda = \LLL - v \cdot \nabla_x$ dissipates the $\disg_q$-norm. Concerning the polynomial space $\spp(m)$, we will rely on the following splitting of the linearized collision operator $\LLL$:
\begin{gather*}
	\LLL = \AA + \BB,\\
	\AA := M \chi_R, \quad \BB := \LLL - \AA,
\end{gather*}
with constants $M,R>0$ and $\chi_R(v) = \chi (v/R)$, where $\chi \in C^\infty_c (\R^3)$ is a smooth function satisfying $\mathbf{1}_{|v| \le 1} \le \chi \le \mathbf{1}_{|v| \le 2}$. The parameters $M, R > 0$  will be tuned later (to be chosen large enough) in order to make $\BB -v \cdot \nabla_x$ dissipative.

\begin{prop}
	\label{prop:w_diss_B}
	Assume $k > 13/2 + 2 |\gamma|$ and define $m= \la v \ra^k$. There are $M_0,R_0>0$ large enough such that for any $M \ge M_0$ and $R \ge R_0$ there holds, for any smooth enough function $f$,
	\begin{equation*}
		\la (\BB-v \cdot \nabla_x) f, f \ra_{L^2_{x, v}(m)} \lesssim - \| f \|_{ L^2_x H^{s, *}_v(m) }^2.
	\end{equation*}
\end{prop}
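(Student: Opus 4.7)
My plan is to exploit the velocity dissipation estimate already established for $\LLL$ in Proposition~\ref{prop:dissipative_lot}, kill the transport contribution by skew-symmetry, and absorb the remaining low-order residue by splitting velocity space into a small and a large ball, using the cutoff $M\chi_R$ for the first and the weight $\la v \ra^{\gamma/2}$ of the anisotropic dissipation norm for the second.

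First I would note that since $m = \la v \ra^k$ depends only on $v$, the transport operator $v \cdot \nabla_x$ is skew-adjoint on $L^2_{x,v}(m)$ and therefore drops out:
\[
\la \BB f, f \ra_{L^2_{x,v}(m)} = \la \LLL f, f \ra_{L^2_{x,v}(m)} - M \la \chi_R f, f \ra_{L^2_{x,v}(m)}.
\]
The assumption $k > 13/2 + 2|\gamma|$ clearly implies $k > 7/2 - |\gamma|/2 + 2s$, so Proposition~\ref{prop:dissipative_lot}, integrated in $x$, yields
\[
\la \LLL f, f \ra_{L^2_{x,v}(m)} \le - c \| f \|_{L^2_x H^{s,*}_v(m)}^2 + C \| f \|_{L^2_{x,v}}^2.
\]

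The crux is to absorb the residue $C \| f \|_{L^2_{x,v}}^2$, which I would split according to $|v| \le R$ and $|v| > R$. Since $\gamma < 0$ and $k + \gamma/2 > 0$ (both granted by $k > 13/2 + 2|\gamma|$), one has $1 \le \la R \ra^{-(2k+\gamma)} \la v \ra^{\gamma} m(v)^2$ on $\{|v| > R\}$, whence
\[
\int_{|v| > R} |f|^2 \d x \d v \le \la R \ra^{-(2k+\gamma)} \| \la v \ra^{\gamma/2} f \|_{L^2_{x,v}(m)}^2 \le \la R \ra^{-(2k+\gamma)} \| f \|_{L^2_x H^{s,*}_v(m)}^2,
\]
by the very definition~\eqref{eq:def-Hs*} of the anisotropic norm. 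Choosing $R = R_0$ large enough so that $C \la R_0 \ra^{-(2k+\gamma)} \le c/2$ absorbs this piece into half the dissipation. On the complementary region, using $m \ge 1$ and $\chi_{R_0} \ge \mathbf{1}_{|v| \le R_0}$,
\[
C \int_{|v| \le R_0} |f|^2 \d x \d v \le C \la \chi_{R_0} f, f \ra_{L^2_{x,v}(m)},
\]
which is swallowed by $-M \la \chi_{R_0} f, f \ra_{L^2_{x,v}(m)}$ as soon as $M \ge M_0 := C$. Collecting the two bounds delivers
\[
\la \BB f, f \ra_{L^2_{x,v}(m)} \le - \tfrac{c}{2} \| f \|_{L^2_x H^{s,*}_v(m)}^2,
\]
which is the desired dissipativity.

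I do not expect a genuine obstacle here: the hard velocity-variable analysis has been carried out once and for all in Proposition~\ref{prop:dissipative_lot}, and what remains is a standard two-parameter localisation argument. The only point requiring minor attention is to verify that the stated threshold $k > 13/2 + 2|\gamma|$ simultaneously subsumes the hypothesis of Proposition~\ref{prop:dissipative_lot} \emph{and} ensures the positivity $k + \gamma/2 > 0$ which makes the weight $\la v \ra^{\gamma} m^2$ grow at infinity, a property indispensable for the large-$v$ absorption step.
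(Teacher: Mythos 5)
Your proof is correct and follows essentially the same route as the paper's: both rely on Proposition~\ref{prop:dissipative_lot}, discard the transport term by skew-symmetry, and absorb the residual $C\|f\|_{L^2_{x,v}}^2$ by a small-$v$/large-$v$ split using $M\chi_R$ and the growth of $\la v\ra^{\gamma}m^2$. The paper packages the split as a pointwise sign condition on the weight $\tfrac{c}{2}\la v\ra^{\gamma}-Cm^{-2}+M\chi_R(v)$, whereas you split the integral explicitly over $\{|v|\le R_0\}$ and $\{|v|>R_0\}$, but the underlying argument is identical.
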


\begin{proof}
	We write
	\begin{equation*}
		\la (\BB-v \cdot \nabla_x) f, f \ra_{L^2_{x, v}(m)} = \la \LLL - \AA f, f \ra_{L^2_{x, v}(m)} - \la v \cdot \nabla_x f, f \ra_{L^2_{x, v}(m)}.
	\end{equation*}
	The second term vanishes in virtue of its gradient structure $(\nabla_x f) \, f = \nabla_x ( |f|^2 )$,	thus by Proposition \ref{prop:dissipative_lot}, we have for some constants $c, C > 0$
	\begin{align*}
		\la (\BB-v \cdot \nabla_x) f, f \ra_{L^2_{x, v}(m)} 
		&\le  - c  \| f \|_{L^2_x H^{s,*}_v(m)}^2 + C \|f\|_{L^2_x L^2_v }^2 -  M \| \chi_R(v) f \|_{L^2_x L^2_v(m)}^2 \\
		&\le - \frac{c}{2} \| f \|_{L^2_x H^{s,*}_v(m)}^2 - \int \left(\frac{c}{2} \la v \ra^\gamma - C m^{-2} + M \chi_R(v) \right) |f|^2 m^2 \d v \d x,
	\end{align*}
	by using that $\| f \|_{L^2_x H^{s,*}_v(m)} \ge \| \la v \ra^{\gamma/2} f \|_{L^2_x L^2_v (m)}$.
	For large values of $|v|$, we have that $\frac{c}{2} \la v \ra^\gamma - C m^{-2} > 0$ by the assumption $k > |\gamma| / 2$, thus there are $M_0,R_0 >0$ large enough such that for all $M \ge M_0$ and $R \ge R_0$ we have
	$$
	\frac{c}{2} \la v \ra^\gamma - C m^{-2} + M \chi_R(v) > 0,
	$$
	from which we deduce the desired estimate.
\end{proof}

As an immediate consequence of Proposition~\ref{prop:w_diss_B} and the fact that $\BB$ commutes with $\nabla_x$, we obtain the following dissipative estimate for $\BB$ in spaces of the type $\mathbf X(m)$ and $\mathbf{X}^*(m)$, recalling the definition in \eqref{eq:def-bfX} and \eqref{eq:def-bfX*}, respectively:
\begin{cor}\label{cor:w_diss_B}
Assume $k > 13/2 + 2 |\gamma|+6s$ and define $m= \la v \ra^k$. There are $M_0,R_0>0$ large enough such that for any $M \ge M_0$ and $R \ge R_0$ there holds, for any smooth enough function $f$,
\begin{equation*}
\la (\BB-v \cdot \nabla_x) f, f \ra_{\mathbf{X}(m)} \lesssim - \| f \|_{\mathbf{X}^* (m)}^2.
\end{equation*}
\end{cor}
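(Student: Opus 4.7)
The plan is to reduce the claim to two applications of Proposition~\ref{prop:w_diss_B} at two different polynomial weights, using the crucial fact that $\BB$ commutes with $\nabla_x$. First I would expand the inner product using~\eqref{eq:def-bfX}:
\begin{equation*}
\la \BB f, f \ra_{\spp(m)} = \la \BB f, f \ra_{L^2_{x, v}(m)} + \la \la v \ra^{-6s} \nabla_x^3 \BB f, \la v \ra^{-6s} \nabla_x^3 f \ra_{L^2_{x, v}(m)}.
\end{equation*}
Since $\LLL$ and the multiplier $M \chi_R(v)$ act only on the velocity variable, and since the transport term $v \cdot \nabla_x$ trivially commutes with the constant-coefficient differential operator $\nabla_x^3$, the full operator $\BB = \LLL - v \cdot \nabla_x - M \chi_R(v)$ commutes with $\nabla_x^3$. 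Hence the second summand rewrites as $\la \BB (\nabla_x^3 f), \nabla_x^3 f \ra_{L^2_{x, v}(\tilde m)}$ with $\tilde m := m \la v \ra^{-6s} = \la v \ra^{k - 6s}$.

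The next step is to apply Proposition~\ref{prop:w_diss_B} twice: once with the weight $m = \la v \ra^k$ to bound the first summand, and once with $\tilde m = \la v \ra^{k - 6s}$ (applied to $\nabla_x^3 f$) to bound the second. Both applications are legitimate precisely because the assumption $k > 13/2 + 2|\gamma| + 6s$ ensures both $k > 13/2 + 2|\gamma|$ and $k - 6s > 13/2 + 2|\gamma|$; this is the quantitative reason for the shift by $6s$ in the hypothesis of the corollary relative to that of the proposition. Summing the two resulting estimates yields
\begin{equation*}
\la \BB f, f \ra_{\spp(m)} \lesssim - \| f \|_{L^2_x H^{s,*}_v(m)}^2 - \| \nabla_x^3 f \|_{L^2_x H^{s,*}_v(\tilde m)}^2.
\end{equation*}

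To conclude, a direct inspection of the definition~\eqref{eq:def-Hs*}--\eqref{eq:def-dotHs*} of $H^{s,*}_v(\cdot)$ shows that the weight $\la v \ra^{-6s}$ can be transferred freely between the function and the weight function in both summands --- the weight enters only via the product $\FF = m \la v \ra^{\gamma/2} f$, so replacing $(m, f)$ by $(\tilde m, \nabla_x^3 f)$ yields the same $\FF$ as replacing it by $(m, \la v \ra^{-6s}\nabla_x^3 f)$. This gives the identity $\| \nabla_x^3 f \|_{H^{s,*}_v(\tilde m)} = \| \la v \ra^{-6s} \nabla_x^3 f \|_{H^{s,*}_v(m)}$, so that the right-hand side above coincides with $-\| f \|_{\dispa(m)}^2$ in view of~\eqref{eq:def-bfX*}. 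There is no genuine obstacle here beyond this commutation argument and the careful weight bookkeeping; all the dissipative content is already encapsulated in Proposition~\ref{prop:w_diss_B}.
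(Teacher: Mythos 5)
Your argument is correct and is precisely the one the paper has in mind: the authors state the corollary as an ``immediate consequence of Proposition~\ref{prop:w_diss_B} and the fact that $\BB$ commutes with $\nabla_x$,'' and your write-up simply unpacks that — applying Proposition~\ref{prop:w_diss_B} with weight $m$ to $f$ and with weight $m\la v\ra^{-6s}$ to $\nabla_x^3 f$, checking that $k>13/2+2|\gamma|+6s$ makes both applications legitimate, and observing that the weight $\la v\ra^{-6s}$ can be absorbed into $\FF$ in the definition of $H^{s,*}_v$. No gap.
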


We henceforth fix constants $M \ge M_0$ and $R \ge R_0$ in such a way that Proposition~\ref{prop:w_diss_B} and Corollary~\ref{cor:w_diss_B} hold.

\section{Nonlinear estimates}
\label{sec:nonlinear_estimates_inhom}

This section is devoted to inhomogeneous nonlinear estimates for the collision onperator $Q$ that will be needed in the proof our main result in Section~\ref{scn:cauchy_th_non_cutoff}. These estimates are of two different kinds: In Section~\ref{sec:nonlinear_Sobolev} we focus on estimates in the Sobolev-type spaces; whereas in Section~\ref{sec:nonlinear_Fourier} we treat estimates in Fourier-based spaces.

\subsection{Nonlinear estimates in Sobolev-type spaces}
\label{sec:nonlinear_Sobolev}

We shall prove inhomogeneous nonlinear estimates for the collision operator $Q$ by using the homogeneous estimates proven in Section~\ref{scn:est_hom_nc_poly}. More precisely we shall prove bilinear estimates (Proposition~\ref{prop:est_inhom_Q_g_f_f}) and trilinear estimates (Proposition~\ref{prop:est_inhom_Q_f_g_h} and Proposition~\ref{prop:est_inhom_Q_f_g_h_gaussian}), for polynomially weighted spaces $\spp(m)$ as well as for exponentially weighted spaces $\spg_q$ (defined in \eqref{eq:def-bfE}). It is worth mentioning that, because of our strategy employed in Section~\ref{scn:cauchy_th_non_cutoff}, some of these estimates are of \emph{mixed type}, that is, they involve one function in a polynomially weighted space $\spp(m)$ and another function in a exponentially weighted space $\spg_q$.

We start by proving the estimates we will use to prove the stability of the iterative scheme from Section \ref{scn:cauchy_th_non_cutoff}.

\begin{prop}
	\label{prop:est_inhom_Q_g_f_f}
	Assume $k > 13/2 + 2 |\gamma| + 6s$ and consider $m = \la v \ra^k$. For any $f,g \in \spp(m) \cap \dispa (m) $ there holds
	\begin{equation}\label{eq:est_inhom_Q_g_f_f}
		\la Q(g, f), f \ra_{\spp(m)} \lesssim \| f \|_{\dispa( m ) }^2 \| g \|_{ \spp(m) } + \| f \|_{\spp(m)} \| f \|_{\dispa (m)} \| g \|_{ \dispa(m) }.
	\end{equation}
Supposing moreover that $g \in \spg_0 \cap \disg_0 $, then there holds
	\begin{equation}\label{eq:est_inhom_Q_g_f_f_gauss_g}
		\la Q(g, f), f \ra_{\spp(m)} \lesssim \| f \|_{\dispa(m) }^2 \| g \|_{ \spg_0 } + \| f \|_{ \spp(m)} \| f \|_{ \dispa (m)} \| g \|_{\disg_0}.
	\end{equation}
\end{prop}

\begin{proof}
Let us start by expanding the inner product defining the norm of $\spp(m)$ in \eqref{eq:def-bfX}
\begin{align*}
\la Q(g, f), f \ra_{\spp(m)} 
&=  \la Q(g, f), f \ra_{L^2_x L^2_v(m)} 
+ \la \nabla_x^3 Q(g, f), \nabla_x^3 f \ra_{L^2_x L^2_v ( m \la v \ra^{-6 s} ) } ,
\end{align*}
thus we get 
\begin{equation}\label{QgffXm}
\begin{aligned}
&\la Q(g, f), f \ra_{\spp(m)} \\
&\qquad\lesssim \left|  \la Q(g, f), f \ra_{L^2_x L^2_v(m)}  \right|  
+ \sum_{ |\alpha| = 3} \sum_{0 \le \beta \le \alpha} \left|\la Q(\partial^{\alpha - \beta}_x g, \partial_x^\beta f), \partial^{\alpha}_x f \ra_{L^2_x L^2_v ( m \la v \ra^{-2 |\alpha| s} ) } \right|  ,
\end{aligned}
\end{equation}
and we shall estimate each term separately.

We fix some $\ell > 13/2 + 2 |\gamma|$ such that $k \ge \ell + 6s$, and observe that in particular we can apply both estimates of Proposition~\ref{prop:Q-L2v} with the weight $\la v \ra^{k-6s}$ in the sequel.

\step{1}{General estimates of \eqref{QgffXm} in $L^p_x$-norms}
The first term in \eqref{QgffXm} is bounded by integrating estimate \eqref{eq:est_Q_f_g_g} in space and using H\"older's inequality $L^\infty_x-L^2_x-L^2_x$, which yields:
\begin{equation}\label{Qgff-1}
\begin{aligned}
&\la Q(g, f), f \ra_{L^2_x L^2_v(m)} \\
&\qquad \lesssim  
\|  g \|_{L^\infty_x L^2_v (\la v \ra^{\ell})} \| f \|_{L^2_x H^{s,*}_v(m)}^2 \\
&\qquad\quad
+ \| g \|_{L^\infty_x H^{s,*}_v(\la v \ra^{\ell})} \| \la v \ra^{\gamma/2} f \|_{L^2_x L^2_v(m)}  \| f \|_{L^2_x H^{s,*}_v(m)} \\
&\qquad\quad
+ \| \la v \ra^{\gamma/2} g \|_{L^2_x L^2_v(m)} \| f \|_{L^\infty_x H^{s,*}_v(\la v \ra^{\ell})} \| \la v \ra^{\gamma/2} f \|_{L^2_x L^2_v(m)}.
\end{aligned}
\end{equation}
We bound the second term in \eqref{QgffXm} depending on the value of $\beta$. When $\beta = \alpha$, we obtain using \eqref{eq:est_Q_f_g_g} a similar estimate as before:
\begin{equation}\label{Qgff-2}
\begin{aligned}
&\la Q(g, \partial^\alpha_x f), \partial^\alpha_x f \ra_{L^2_x L^2_v(m \la v \ra^{-6 s})}  \\
&\qquad \lesssim 
\|  g \|_{L^\infty_x L^2_v (\la v \ra^{\ell})} \| \nabla^3_x f \|_{L^2_x H^{s,*}_v(m \la v \ra^{-6 s})}^2 \\
&\qquad\quad
+ \| g \|_{L^\infty_x H^{s,*}_v(\la v \ra^{\ell})} \| \la v \ra^{\gamma/2} \nabla^3_x f \|_{L^2_x L^2_v(m \la v \ra^{-6 s})}  \| \nabla^3_x f \|_{L^2_x H^{s,*}_v(m \la v \ra^{-6 s})} \\
&\qquad\quad
+ \| \la v \ra^{\gamma/2} g \|_{L^\infty_x L^2_v(m \la v \ra^{-6 s})} \| \nabla^3_x f \|_{L^2_x H^{s,*}_v(\la v \ra^{\ell})} \| \la v \ra^{\gamma/2} \nabla^3_x f \|_{L^2_x L^2_v(m \la v \ra^{-6 s})}.
\end{aligned}
\end{equation}
When $\beta = 0$, we use \eqref{eq:est_Q_f_g_h} that we integrate in space and using again H\"older's inequality $L^\infty_x-L^2_x-L^2_x$, which gives 
\begin{equation}\label{Qgff-3}
\begin{aligned}
&\la Q( \partial^\alpha_x g, f), \partial^\alpha_x f \ra_{L^2_x L^2_v ( m \la v \ra^{-6 s} ) } \\
&\qquad \lesssim  
\| \la v \ra^{\gamma/2} \nabla^3_x g \|_{L^2_x L^2_v(m \la v \ra^{-6 s})} \|  f \|_{L^\infty_x H^{s,*}_v(\la v \ra^{\ell})}  \| \la v \ra^{\gamma/2} \nabla^3_x f \|_{L^2_x L^2_v(m \la v \ra^{-6 s})} \\
&\qquad\quad
+ \| \nabla^3_x g  \|_{L^2_x H^{s,*}_v(\la v \ra^{\ell})} \| f \|_{L^\infty_x H^{s,*}_v(m \la v \ra^{-6 s})}  \| \la v \ra^{\gamma/2} \nabla^3_x f \|_{L^2_x L^2_v(m \la v \ra^{-6 s})} \\
&\qquad\quad
+  \| \nabla^3_x g  \|_{L^2_x L^2_v (\la v \ra^{\ell})} \| f \|_{L^\infty_x H^{s,*}_v(m \la v \ra^{-4 s})}  \| \nabla^3_x f \|_{L^2_x H^{s,*}_v(m \la v \ra^{-6 s})} .
\end{aligned}
\end{equation}
When $|\beta| = 1$, we we integrate estimate \eqref{eq:est_Q_f_g_h} 
using H\"older's inequality $L^2_x-L^4_x-L^4_x$:
\begin{equation}\label{Qgff-4}
\begin{aligned}
&\la Q(\partial^{\alpha-\beta}_x g, \partial^\beta_x f), \partial^\alpha_x f \ra_{L^2_x L^2_v ( m \la v \ra^{-6 s} ) }\\
&\qquad \lesssim  
\| \la v \ra^{\gamma/2} \nabla^2_x g \|_{L^4_x L^2_v(m \la v \ra^{-6 s})} \| \nabla_x f \|_{L^4_x H^{s,*}_v(\la v \ra^{\ell})}  \| \la v \ra^{\gamma/2} \nabla^3_x f \|_{L^2_x L^2_v(m \la v \ra^{-6 s})} \\
&\qquad\quad
+ \| \nabla^2_x g  \|_{L^4_x H^{s,*}_v(\la v \ra^{\ell})} \| \nabla_x f \|_{L^4_x H^{s,*}_v(m \la v \ra^{-6 s})}  \| \la v \ra^{\gamma/2} \nabla^3_x f \|_{L^2_x L^2_v(m \la v \ra^{-6 s})} \\
&\qquad\quad
+  \| \nabla^2_x g  \|_{L^4_x L^2_v (\la v \ra^{\ell})} \| \nabla_x f \|_{L^4_x H^{s,*}_v(m \la v \ra^{-4 s})}  \| \nabla^3_x f \|_{L^2_x H^{s,*}_v(m \la v \ra^{-6 s})}.
\end{aligned}
\end{equation}
For $|\beta|=2$ we integrate again \eqref{eq:est_Q_f_g_h} in space using H\"older's inequality $L^\infty_x-L^2_x-L^2_x$, which yields
\begin{equation}\label{Qgff-5}
\begin{aligned}
&\la Q(\partial^{\alpha-\beta}_x g, \partial^\beta_x f), \partial^\alpha_x f \ra_{L^2_x L^2_v ( m \la v \ra^{-6 s} ) } \\ 
&\qquad \lesssim  
\| \la v \ra^{\gamma/2} \nabla_x g \|_{L^\infty_x L^2_v(m \la v \ra^{-6 s})} \| \nabla^2_x f \|_{L^2_x H^{s,*}_v(\la v \ra^{\ell})}  \| \la v \ra^{\gamma/2} \nabla^3_x f \|_{L^2_x L^2_v(m \la v \ra^{-6 s})} \\
&\qquad\quad
+ \| \nabla_x g  \|_{L^\infty_x H^{s,*}_v(\la v \ra^{\ell})} \| \nabla^2_x f \|_{L^2_x H^{s,*}_v(m \la v \ra^{-6 s})}  \| \la v \ra^{\gamma/2} \nabla^3_x f \|_{L^2_x L^2_v(m \la v \ra^{-6 s})} \\
&\qquad\quad
+  \| \nabla_x g  \|_{L^\infty_x L^2_v (\la v \ra^{\ell})} \| \nabla^2_x f \|_{L^2_x H^{s,*}_v(m \la v \ra^{-4 s})}  \| \nabla^3_x f \|_{L^2_x H^{s,*}_v(m \la v \ra^{-6 s})}.
\end{aligned}
\end{equation}

\step{2}{Sobolev embeddings for \eqref{eq:est_inhom_Q_g_f_f}} 
We first observe that
$$
\begin{aligned}
\| f \|_{\spp(m)} & \approx \| f \|_{L^2_x L^2_v (m)} + \| \la v \ra^{-2s} \nabla_x f \|_{L^2_x L^2_v (m)} \\
&\quad 
+ \| \la v \ra^{-4s} \nabla^2_x f \|_{L^2_x L^2_v (m)}  + \| \la v \ra^{-6s} \nabla^3_x f \|_{L^2_x L^2_v (m)}
\end{aligned}
$$
and
$$
\begin{aligned}
\| f \|_{\dispa(m)} & \approx \| f \|_{L^2_x H^{s,*}_v (m)} + \| \la v \ra^{-2s} \nabla_x f \|_{L^2_x H^{s,*}_v (m)} \\
&\quad
+ \| \la v \ra^{-4s} \nabla^2_x f \|_{L^2_x H^{s,*}_v (m)}  + \| \la v \ra^{-6s} \nabla^3_x f \|_{L^2_x H^{s,*}_v (m)}
\end{aligned}
$$
Moreover, since $k \ge \ell + 6s$, we have
$$
\| g \|_{H^2_x L^2_v (\la v \ra^\ell) } 
+ \| \nabla_x g \|_{H^1_x L^2_v (\la v \ra^{\ell})}
+ \| \nabla^2_x g \|_{H^1_x L^2_v (\la v \ra^{\ell})}
+ \| \nabla^3_x g \|_{L^2_x L^2_v (\la v \ra^{\ell})} \lesssim \| g \|_{\spp(m)}
$$ 
and
$$
\| g \|_{H^2_x H^{s,*}_v (\la v \ra^\ell) } 
+ \| \nabla_x g \|_{H^1_x H^{s,*}_v (\la v \ra^{\ell})}
+ \| \nabla^2_x g \|_{H^1_x H^{s,*}_v (\la v \ra^{\ell})}
+ \| \nabla^3_x g \|_{L^2_x H^{s,*}_v (\la v \ra^{\ell})} \lesssim \| g \|_{\dispa(m)}.
$$ 
From \eqref{Qgff-1}, using the Sobolev embedding $H^2_x(\R^3) \hookrightarrow L^\infty_x (\R^3)$ we thus get
$$
\la Q(g, f), f \ra_{L^2_x L^2_v(m)} \lesssim \| f \|_{\dispa( m ) }^2 \| g \|_{ \spp(m) } + \| f \|_{\spp(m)} \| f \|_{\dispa (m)} \| g \|_{ \dispa(m) }.
$$
Arguing similarly from \eqref{Qgff-2} we obtain
$$
\la Q(g, \partial^\alpha_x f), \partial^\alpha_x f \ra_{L^2_x L^2_v(m)} \lesssim \| f \|_{\dispa( m ) }^2 \| g \|_{ \spp(m) } + \| f \|_{\spp(m)} \| f \|_{\dispa (m)} \| g \|_{ \dispa(m) }.
$$
For \eqref{Qgff-3}, we use again $H^2_x(\R^3) \hookrightarrow L^\infty_x (\R^3)$ and 
$$
\| f \|_{H^2_x H^{s,*}_v(m \la v \ra^{-4s})} \lesssim \| f \|_{\dispa(m)}
$$
to deduce
$$
\la Q(\partial^\alpha_x g, f), \partial^\alpha_x f \ra_{L^2_x L^2_v(m)} 
\lesssim  \| f \|_{\dispa( m ) }^2 \| g \|_{ \spp(m) }
+\| f \|_{\spp(m)} \| f \|_{\dispa (m)} \| g \|_{ \dispa(m) } .
$$
For $|\beta|=1$, from \eqref{Qgff-4} and the Sobolev embedding $H^1_x(\R^3) \hookrightarrow L^4_x (\R^3)$, we remark that 
$$
\| \nabla_x f \|_{H^1_x H^{s,*}_v(m \la v \ra^{-4s})} \lesssim \| f \|_{\dispa(m)} ,
$$
hence we get 
$$
\la Q(\partial^{\alpha-\beta}_x g, \partial^\beta_x f), \partial^\alpha_x f \ra_{L^2_x L^2_v(m)} 
\lesssim  \| f \|_{\dispa( m ) }^2 \| g \|_{ \spp(m) } + \| f \|_{\spp(m)} \| f \|_{\dispa (m)} \| g \|_{ \dispa(m) }.
$$
Finally, for the case $|\beta|=2$, estimate \eqref{Qgff-5} together with $H^2_x(\R^3) \hookrightarrow L^\infty_x (\R^3)$ yields
$$
\la Q(\partial^{\alpha-\beta}_x g, \partial^\beta_x f), \partial^\alpha_x f \ra_{L^2_x L^2_v(m)} 
\lesssim  \| f \|_{\dispa( m ) }^2 \| g \|_{ \spp(m) } + \| f \|_{\spp(m)} \| f \|_{\dispa (m)} \| g \|_{ \dispa(m) }.
$$
This concludes the proof of \eqref{eq:est_inhom_Q_g_f_f}.

\step{2}{Proof of estimate \eqref{eq:est_inhom_Q_g_f_f_gauss_g}} 
We first remark that 
$$
\| g \|_{H^2_x H^{s,*}_v( \la v \ra^\ell)}
\lesssim \| \pi g \|_{H^2_x L^2_v} + \|  g^\perp \|_{H^2_x H^{s,*}_v( \la v \ra^\ell)}
\lesssim \| g \|_{\spg_0} + \| g \|_{\disg_0},
$$
and that 
$$
\| \la v \ra^{\gamma/2} g \|_{L^2_x L^2_v(m)}
+ \| \la v \ra^{\gamma/2} g \|_{H^2_x L^2_v(m \la v \ra^{-6s})}
\lesssim \| g \|_{\spg_0} .
$$
Moreover
$$
\| \la v \ra^{\gamma/2} f \|_{L^2_x L^2_v (m)} 
+ \| \la v \ra^{\gamma/2} \nabla^3_x f \|_{L^2_x L^2_v (m \la v \ra^{-6s})} 
\lesssim  \min \{ \| f \|_{\spp(m)}   ,  \| f \|_{\dispa(m)} \}.
$$
Therefore from \eqref{Qgff-1} we get 
$$
\la Q(g,f) , f \ra_{L^2_x L^2_v (m)}
\lesssim  \| f \|_{\dispa(m) }^2 \| g \|_{ \spg_0 } + \| f \|_{ \spp(m)} \| f \|_{ \dispa (m)} \| g \|_{\disg_0} ,
$$
and furthermore, for the case $\beta = 0$, we deduce from \eqref{Qgff-2}
$$
\la Q(g, \partial^{\alpha}_x f), \partial^{\alpha}_x f \ra_{L^2_x L^2_v (m \la v \ra^{- 6 s} )} 
\lesssim
\| f \|_{\dispa(m) }^2 \| g \|_{ \spg_0 } + \| f \|_{ \spp(m)} \| f \|_{ \dispa (m)} \| g \|_{\disg_0} .
$$

For all the other cases $|\beta|=1$, $|\beta|=2$ and $\beta= \alpha$, we can argue as in Step~1 by observing that 
$$
\| \nabla_x g \|_{H^2_x H^{s,*}_v (\la v \ra^{\ell})}
+ \| \nabla_x^2 g \|_{H^1_x H^{s,*}_v (\la v \ra^{\ell})}
+ \| \nabla_x^3 g \|_{L^2_x H^{s,*}_v (\la v \ra^{\ell})}
\lesssim \| g \|_{\disg_0},
$$
which thus implies from \eqref{Qgff-3}--\eqref{Qgff-4}--\eqref{Qgff-5} that 
$$
\la Q(\partial^{\alpha-\beta}_x g, \partial^\alpha_x f), \partial^{\alpha}_x f \ra_{L^2_x L^2_v (m \la v \ra^{- 6 s} )} 
\lesssim
\| f \|_{\dispa(m) }^2 \| g \|_{ \spg_0 } + \| f \|_{ \spp(m)} \| f \|_{ \dispa (m)} \| g \|_{\disg_0} .
$$
This concludes the proof of \eqref{eq:est_inhom_Q_g_f_f_gauss_g}.
\end{proof}

We now prove the estimates which we will use to prove the convergence of the iterative scheme in Section \ref{scn:cauchy_th_non_cutoff}.
\begin{prop}
	\label{prop:est_inhom_Q_f_g_h}
	Assume $k > 13/2 + 5 |\gamma|/2 + 6s$ and consider $m = \la v \ra^k$. For any $f,g,h \in \spp(m) \cap \dispa(m)$ there holds
	\begin{equation}
		\label{eq:est_inhom_Q_f_g_h_poly}
		\la Q(f, g), h \ra_{\spp(m)} \lesssim 
		\| f\|_{ \dispa(m) } \| g \|_{ \dispa(m)} \| h \|_{\spp(m)} 
		+  \| f \|_{\spp(m)} \| \la v \ra^{2 s} g \|_{ \dispa(m) } \| h \|_{ \dispa(m) } .
	\end{equation}
Supposin moreover that $g \in \spg_0 \cap \disg_0$, then there holds
	\begin{equation}\label{eq:est_inhom_Q_f_g_h_poly_gauss}
		\begin{aligned}
		\la Q(f, g), h \ra_{\spp(m)}  
		& \lesssim \| f\|_{ \dispa(m) } \| g \|_{ \spg_0} \| h \|_{ \dispa(m) } + \| f \|_{\spp(m)} \| g \|_{ \disg_0 } \| h \|_{ \dispa(m) } \\ &\quad +  \| f\|_{ \dispa(m) } \| g \|_{ \disg_0 } \| h \|_{\spp(m)} ,
		\end{aligned}
	\end{equation}
and
\begin{equation}\label{eq:est_inhom_Q_g_f_h_poly_gauss}
		\begin{aligned}
		&\la Q(g, f), h \ra_{\spp(m)} 
		\lesssim  
		\| \la v \ra^{2s} f\|_{ \dispa(m) } \| g \|_{ \spg_0} \| h \|_{ \dispa(m) }  +  \| f\|_{ \dispa(m) } \| g \|_{ \disg_0 } \| h \|_{\spp(m)} ,
		\end{aligned}
	\end{equation}
\end{prop}

\begin{proof}
By expanding the inner product of $\spp(m)$, we are led to estimate 
\begin{equation}\label{QfghXm}
\begin{aligned}
\la Q(f, g), h \ra_{\spp(m)} 
&\lesssim \left|  \la Q(f, g), h \ra_{L^2_x L^2_v(m)}  \right| \\
&\quad 
+ \sum_{|\alpha| = 3} \sum_{0 \le \beta \le \alpha} \left|\la Q(\partial^{\alpha - \beta}_x f, \partial_x^\beta g), \partial^{\alpha}_x h \ra_{L^2_x L^2_v ( m \la v \ra^{-6 s} ) } \right|  .
\end{aligned}
\end{equation}
The proof of each one of the estimates \eqref{eq:est_inhom_Q_f_g_h_poly}, \eqref{eq:est_inhom_Q_f_g_h_poly_gauss} and \eqref{eq:est_inhom_Q_g_f_h_poly_gauss} then follows the same approach: For each term appearing in \eqref{QfghXm} we integrate in space the corresponding homogeneous estimate and then use H\"older's inequality and Sobolev embeddings arguing similarly as in Step~1 of the proof of Proposition~\ref{prop:est_inhom_Q_g_f_f}. 

We fix some $\ell > 13/2 + 2|\gamma|$ such that $k \ge \ell + 6s + |\gamma|/2$, and remark that we can apply estimate \eqref{eq:est_Q_f_g_h} of Proposition~\ref{prop:Q-L2v}  with the weight $\la v \ra^{k-6s}$ in the sequel.

\step{1}{General estimates of \eqref{QfghXm} in $L^p_x$-norms}
The first term in \eqref{QfghXm} is estimated using \eqref{eq:est_Q_f_g_h} and H\"older's inequality $L^\infty_x-L^2_x-L^2_x$, which yields
\begin{equation}\label{Qfgh-1}
\begin{aligned}
&\la Q(f, g), h \ra_{L^2_x L^2_v (m)} \\
&\qquad\lesssim  
\| \la v \ra^{\gamma/2} f \|_{L^2_x L^2_v(m)} \|  g \|_{L^\infty_x H^{s,*}_v(\la v \ra^{\ell})}  \| \la v \ra^{\gamma/2} h \|_{L^2_x L^2_v(m)} \\
&\qquad\quad
+ \| f \|_{L^\infty_x H^{s,*}_v(\la v \ra^{\ell})} \| g \|_{L^2_x H^{s,*}_v(m)}  \| \la v \ra^{\gamma/2} h \|_{L^2_x L^2_v(m)} \\
&\qquad\quad
+  \| f \|_{L^\infty_x L^2_v (\la v \ra^{\ell})} \| \la v \ra^{2s} g \|_{L^2_x H^{s,*}_v(m)}  \| h \|_{L^2_x H^{s,*}_v(m)}.
\end{aligned}
\end{equation}

The second term in \eqref{QfghXm} is then estimated depending on the value of $\beta$. For the case $\beta = 0$, we also have by using \eqref{eq:est_Q_f_g_h} and H\"older's inequality $L^\infty_x-L^2_x-L^2_x$ that
\begin{equation}\label{Qfgh-2}
\begin{aligned}
&\la Q(\partial^{\alpha}_x f, g), \partial^{\alpha}_x h\ra_{L^2_x L^2_v (m \la v \ra^{- 6 s} )} \\
&\qquad\lesssim  
\| \la v \ra^{\gamma/2} \nabla^3_x f \|_{L^2_x L^2_v(m \la v \ra^{- 6 s})} \|  g \|_{L^\infty_x H^{s,*}_v(\la v \ra^{\ell})}  \| \la v \ra^{\gamma/2} \nabla^3_x h \|_{L^2_x L^2_v(m \la v \ra^{- 6 s})} \\
&\qquad\quad
+ \| \nabla^3_x f \|_{L^2_x H^{s,*}_v(\la v \ra^{\ell})} \| g \|_{L^\infty_x H^{s,*}_v(m \la v \ra^{- 6 s})}  \| \la v \ra^{\gamma/2} \nabla^3_x h \|_{L^2_x L^2_v(m \la v \ra^{- 6 s})} \\
&\qquad\quad
+  \| \nabla^3_x f \|_{L^2_x L^2_v (\la v \ra^{\ell})} \| \la v \ra^{2s} g \|_{L^\infty_x H^{s,*}_v(m \la v \ra^{- 6 s})}  \| \nabla^3_x h \|_{L^2_x H^{s,*}_v(m \la v \ra^{- 6 s})}.
\end{aligned}
\end{equation}
When $|\beta|=1$ we use H\"older's inequality $L^4_x-L^4_x-L^2_x$ to get
\begin{equation}\label{Qfgh-3}
\begin{aligned}		
&\la Q(\partial^{\alpha - \beta}_x f, \partial_x^\beta g), \partial^{\alpha}_x h \ra_{L^2_x L^2_v(m \la v \ra^{-6s} )}\\
&\qquad \lesssim 
\| \la v \ra^{\gamma/2} \nabla^2_x f \|_{L^4_x L^2_v(m \la v \ra^{- 6 s})} \| \nabla_x g \|_{L^4_x H^{s,*}_v(\la v \ra^{\ell})}  \| \la v \ra^{\gamma/2} \nabla^3_x h \|_{L^2_x L^2_v(m \la v \ra^{- 6 s})} \\
&\qquad\quad
+ \| \nabla^2_x f \|_{L^4_x H^{s,*}_v(\la v \ra^{\ell})} \| \nabla_x g \|_{L^4_x H^{s,*}_v(m \la v \ra^{- 6 s})}  \| \la v \ra^{\gamma/2} \nabla^3_x h \|_{L^2_x L^2_v(m \la v \ra^{- 6 s})} \\
&\qquad\quad
+  \| \nabla^2_x f \|_{L^4_x L^2_v (\la v \ra^{\ell})} \| \la v \ra^{2s} \nabla_x g \|_{L^4_x H^{s,*}_v(m \la v \ra^{- 6 s})}  \| \nabla^3_x h \|_{L^2_x H^{s,*}_v(m \la v \ra^{- 6 s})}.
\end{aligned}
\end{equation}
and for $|\beta|=2$:
\begin{equation}\label{Qfgh-4}
\begin{aligned}	
&\la Q(\partial^{\alpha - \beta}_x f, \partial_x^\beta g), \partial^{\alpha}_x h \ra_{L^2_x L^2_v(m \la v \ra^{-6s} )}\\
&\qquad \lesssim 
\| \la v \ra^{\gamma/2} \nabla_x f \|_{L^4_x L^2_v(m \la v \ra^{- 6 s})} \| \nabla^2_x g \|_{L^4_x H^{s,*}_v(\la v \ra^{\ell})}  \| \la v \ra^{\gamma/2} \nabla^3_x h \|_{L^2_x L^2_v(m \la v \ra^{- 6 s})} \\
&\qquad\quad
+ \| \nabla_x f \|_{L^4_x H^{s,*}_v(\la v \ra^{\ell})} \| \nabla^2_x g \|_{L^4_x H^{s,*}_v(m \la v \ra^{- 6 s})}  \| \la v \ra^{\gamma/2} \nabla^3_x h \|_{L^2_x L^2_v(m \la v \ra^{- 6 s})} \\
&\qquad\quad
+  \| \nabla_x f \|_{L^4_x L^2_v (\la v \ra^{\ell})} \| \la v \ra^{2s} \nabla^2_x g \|_{L^4_x H^{s,*}_v(m \la v \ra^{- 6 s})}  \| \nabla^3_x h \|_{L^2_x H^{s,*}_v(m \la v \ra^{- 6 s})}.
\end{aligned}
\end{equation}	
Finally, for $\beta=\alpha$ we get, using again H\"older's inequality $L^\infty_x-L^2_x-L^2_x$,
\begin{equation}\label{Qfgh-5}
\begin{aligned}		
&\la Q( f, \partial_x^\alpha g), \partial^{\alpha}_x h \ra_{L^2_x L^2_v (m \la v \ra^{- 6 s} )} \\
&\qquad\lesssim
\| \la v \ra^{\gamma/2}  f \|_{L^\infty_x L^2_v(m \la v \ra^{- 6 s})} \| \nabla^3_x g \|_{L^2_x H^{s,*}_v(\la v \ra^{\ell})}  \| \la v \ra^{\gamma/2} \nabla^3_x h \|_{L^2_x L^2_v(m \la v \ra^{- 6 s})} \\
&\qquad\quad
+ \|  f \|_{L^\infty_x H^{s,*}_v(\la v \ra^{\ell})} \| \nabla^3_x g \|_{L^2_x H^{s,*}_v(m \la v \ra^{- 6 s})}  \| \la v \ra^{\gamma/2} \nabla^3_x h \|_{L^2_x L^2_v(m \la v \ra^{- 6 s})} \\
&\qquad\quad
+  \|  f \|_{L^\infty_x L^2_v (\la v \ra^{\ell})} \| \la v \ra^{2s} \nabla^3_x g \|_{L^2_x H^{s,*}_v(m \la v \ra^{- 6 s})}  \| \nabla^3_x h \|_{L^2_x H^{s,*}_v(m \la v \ra^{- 6 s})} .
	\end{aligned}
\end{equation}	
We now split the proof into three steps.

\step{2}{Sobolev embeddings to prove \eqref{eq:est_inhom_Q_f_g_h_poly}}
It follows from estimates \eqref{Qfgh-1}--\eqref{Qfgh-2}--\eqref{Qfgh-3}--\eqref{Qfgh-4}--\eqref{Qfgh-5} by using the Sobolev embeddings $H^2_x(\R^3) \hookrightarrow L^\infty_x (\R^3)$ and  $H^1_x(\R^3) \hookrightarrow  L^4_x (\R^3)$ as in Step~1 of Proposition~\ref{prop:est_inhom_Q_g_f_f}.

\step{3}{Proof of \eqref{eq:est_inhom_Q_f_g_h_poly_gauss}}
We first observe that 
$$
\| g \|_{H^2_x H^{s,*}_v( \la v \ra^\ell )}
\lesssim \| \pi g \|_{H^2_x L^2_v} + \|  g^\perp \|_{L^2_x H^{s,*}_v( \la v \ra^\ell )}
\lesssim \| g \|_{\spg_0} + \| g \|_{\disg_0},
$$
and
$$
\| \la v \ra^{2s} g \|_{L^2_x H^{s,*}_v(m)}
\lesssim \| \pi g \|_{L^2_x L^2_v} + \| \la v \ra^{2s} g^\perp \|_{L^2_x H^{s,*}_v(m)}
\lesssim \| g \|_{\spg_0} + \| g \|_{\disg_0}.
$$
Moreover 
$$
\| \la v \ra^{\gamma/2} f \|_{L^2_x L^2_v (m)} + \| \la v \ra^{\gamma/2} f \|_{H^2_x L^2_v (\la v \ra^{\ell})}
\lesssim \min \left\{ \| f \|_{\spp(m)} , \| f \|_{\dispa(m)} \right\}
$$
since $k \ge \ell + 6s$. Therefore from \eqref{Qfgh-1} we get 
$$
\begin{aligned}
&\la Q(f,g) , h \ra_{L^2_x L^2_v (m)} \\
&\qquad 
\lesssim  \| f\|_{ \dispa(m) } \| g \|_{ \spg_0} \| h \|_{ \dispa(m) } + \| f \|_{\spp(m)} \| g \|_{ \disg_0 } \| h \|_{ \dispa(m) }  
+  \| f\|_{ \dispa(m) } \| g \|_{ \disg_0 } \| h \|_{\spp(m)}.
\end{aligned}
$$
For the case $\beta = 0$ we remark that, similarly as above, we have 
$$
\| \la v \ra^{2s} g \|_{H^2_x H^{s,*}_v(m \la v \ra^{-6s})}
\lesssim \| \pi g \|_{H^2_x L^2_v} + \| \la v \ra^{2s} g^\perp \|_{H^2_x H^{s,*}_v(m \la v \ra^{-6s})}
\lesssim \| g \|_{\spg_0} + \| g \|_{\disg_0}.
$$
moreover
$$
\| \la v \ra^{\gamma/2} \nabla^3_x f \|_{L^2_x L^2_v (m \la v \ra^{-6s})} 
+\| \nabla^3_x f \|_{L^2_x L^2_v (\la v \ra^\ell)} 
\lesssim \min \left\{ \| f \|_{\spp(m)} , \| f \|_{\dispa(m)} \right\},
$$
since $k + \gamma/2 \ge \ell + 6s$. Hence we deduce from \eqref{Qfgh-2}
$$
\begin{aligned}
&\la Q(\partial^{\alpha}_x f, g), \partial^{\alpha}_x h\ra_{L^2_x L^2_v (m \la v \ra^{- 6 s} )} \\
&\qquad 
\lesssim  \| f\|_{ \dispa(m) } \| g \|_{ \spg_0} \| h \|_{ \dispa(m) } + \| f \|_{\spp(m)} \| g \|_{ \disg_0 } \| h \|_{ \dispa(m) }  
+  \| f\|_{ \dispa(m) } \| g \|_{ \disg_0 } \| h \|_{\spp(m)}.
\end{aligned}
$$
For all the other cases $|\beta|=1$, $|\beta|=2$ and $\beta= \alpha$, we can argue as in Step~1 by observing that 
$$
\| \nabla_x g \|_{L^4_x H^{s,*}_v (\la v \ra^{\ell})}
+ \| \nabla_x^2 g \|_{L^4_x H^{s,*}_v (\la v \ra^{\ell})}
+ \| \nabla_x^3 g \|_{L^2_x H^{s,*}_v (\la v \ra^{\ell})}
\lesssim \| g \|_{\disg_0},
$$
as well as
$$
\| \nabla_x g \|_{L^4_x H^{s,*}_v (m \la v \ra^{-4s})}
+ \| \nabla_x^2 g \|_{L^4_x H^{s,*}_v (m \la v \ra^{-4s})}
+ \| \nabla_x^3 g \|_{L^2_x H^{s,*}_v (m \la v \ra^{-4s})}
\lesssim \| g \|_{\disg_0},
$$
which thus implies from \eqref{Qfgh-3}--\eqref{Qfgh-4}--\eqref{Qfgh-5} that 
$$
\la Q(\partial^{\alpha-\beta}_x f, \partial^\alpha_x g), \partial^{\alpha}_x h\ra_{L^2_x L^2_v (m \la v \ra^{- 6 s} )} 
\lesssim
\| g \|_{ \disg_0} \left( \| f\|_{ \dispa(m) } \| h \|_{\spp(m)}  + \| f \|_{\spp(m) } \| h \|_{\dispa(m)} \right).
$$

\step{4}{Proof of \eqref{eq:est_inhom_Q_g_f_h_poly_gauss}}
It follows similarly as in Step~2 above, so we omit the proof.
\end{proof}

\subsection{Nonlinear estimates in Fourier-based spaces}
\label{sec:nonlinear_Fourier}

We shall now prove nonlinear estimates in Fourier-based spaces by using the estimates of Proposition~\ref{prop:Q-L2v}.
For polynomial weight functions $m = \la v \ra^k$ we define
\begin{equation}\label{eq:def-Nm}
\NN_m [f,g,h] (\xi) = \left\{ \int_0^\infty \left|\left\la \widehat Q (  f(t) ,  g(t)) (\xi) , \widehat h(t,\xi) \right\ra_{L^2_v(m)}\right|  \d t  \right\}^{1/2}.
\end{equation}

\begin{prop}\label{prop:nonlinear_Nm_fgh}
Let $p \in [1, \infty]$. Let $k> 13/2 + 5|\gamma|/2$ and define the weight function $m = \la v \ra^k$. For any $f,\la v \ra^{2s} g , h \in (L^1_\xi \cap L^p_\xi) L^\infty_t L^2_v(m) \cap(L^1_\xi \cap L^p_\xi) L^2_t H^{s,*}_v(m)$ there holds
\begin{equation}\label{eq:Nm_fgh}
\begin{aligned}
\| \NN_m [f,g,h] \|_{L^p_\xi}
&\lesssim 
\| \widehat f \|_{L^p_\xi L^\infty_t L^2_v (m)}^{1/2} \| \la v \ra^{2s} \widehat g \|_{L^1_\xi L^2_t H^{s,*}_v(m)}^{1/2} \| \widehat h \|_{L^p_\xi L^2_t H^{s,*}_v(m)}^{1/2} \\
&\quad
+ \| \widehat f \|_{L^p_\xi L^2_t H^{s,*}_v (m)}^{1/2} \| \widehat g \|_{L^1_\xi L^2_t H^{s,*}_v(m)}^{1/2} \| \widehat h \|_{L^p_\xi L^\infty_t L^2_v(m)}^{1/2}.
\end{aligned}
\end{equation}
Supposing moreover that $g \in (L^1_\xi \cap L^p_\xi) L^\infty_t L^2_v(\mu^{-1/2}) \cap(L^1_\xi \cap L^p_\xi) L^2_t H^{s,**}_v(\mu^{-1/2})$, then there holds
\begin{equation}\label{eq:Nm_fgh_exp}
\begin{aligned}
\| \NN_m [f,g,h] \|_{L^p_\xi}
&\lesssim 
\| \widehat f  \|_{L^p_\xi L^2_t H^{s,*}_v(m)}^{1/2}  \| \widehat g  \|_{L^1_\xi L^2_t H^{s,**}_v(\mu^{-1/2})}^{1/2}  \| \widehat h \|_{L^p_\xi L^\infty_t L^2_v(m)}^{1/2} \\
&\quad
+  \| \widehat f  \|_{L^1_\xi L^2_t H^{s,*}_v(m)}^{1/2}  \| \widehat g  \|_{L^p_\xi L^\infty_t L^2_v(\mu^{-1/2})}^{1/2}  \| \widehat h \|_{L^p_\xi L^2_t H^{s,*}_v(m)}^{1/2} \\
&\quad
+  \| \widehat f  \|_{L^p_\xi L^\infty_t L^2_v(m)}^{1/2}  \| \widehat g  \|_{L^1_\xi L^2_t H^{s,**}_v(\mu^{-1/2})}^{1/2}  \| \widehat h \|_{L^p_\xi L^2_t H^{s,*}_v(m)}^{1/2},
\end{aligned}
\end{equation}
and also
\begin{equation}\label{eq:Nm_gfh_exp}
\begin{aligned}
\| \NN_m [g,f,h] \|_{L^p_\xi}
&\lesssim 
 \| \widehat g  \|_{L^1_\xi L^2_t H^{s,**}_v(\mu^{-1/2})}^{1/2}   \| \widehat f  \|_{L^p_\xi L^2_t H^{s,*}_v(m)}^{1/2} \| \widehat h \|_{L^p_\xi L^\infty_t L^2_v(m)}^{1/2} \\
&\quad
+  \| \widehat g  \|_{L^p_\xi L^\infty_t L^2_v(\mu^{-1/2})}^{1/2} \| \la v \ra^{2s} \widehat f  \|_{L^1_\xi L^2_t H^{s,*}_v(m)}^{1/2}   \| \widehat h \|_{L^p_\xi L^2_t H^{s,*}_v(m)}^{1/2} .\end{aligned}
\end{equation}

\end{prop}

\begin{proof}
Observe that we can apply Proposition~\ref{prop:Q-L2v} with $\la v \ra^\ell \lesssim \la v \ra^{\gamma/2} m $ and we split the proof into three steps.

\medskip\noindent
\textit{Step 1: Proof of \eqref{eq:Nm_fgh}.}
Thanks to estimate \eqref{eq:est_Q_f_g_h} we first obtain
\begin{equation}\label{eq:Q_hatf_hatg_hath}
\begin{aligned}
&\left|\left\la \widehat Q (  f ,  g) (\xi) , \widehat h(\xi) \right\ra_{L^2_v(m)} \right| 
\lesssim
\int_{\R^3} \left| \left\la  Q ( \widehat f(\xi-\eta) , \widehat g(\eta) )  , \widehat h(\xi) \right\ra_{L^2_v(m)}   \right| \d \eta \\
&\qquad\lesssim
\| \widehat h(\xi) \|_{L^2_v(\la v \ra^{\gamma/2} m)} \int_{\R^3} \| \widehat f (\xi-\eta) \|_{H^{s,*}_v(m)} \| \widehat g (\eta) \|_{H^{s,*}_v(m)} \d \eta  \\
&\qquad\quad
+ \| \widehat h(\xi) \|_{H^{s,*}_v(m)} \int_{\R^3} \| \widehat f (\xi-\eta) \|_{L^2_v(\la v \ra^{\gamma/2} m)} \| \widehat g (\eta) \|_{H^{s,*}_v(\la v \ra^{2s} m)} \d \eta ,
\end{aligned}
\end{equation}
which implies
\begin{equation*}
\NN_m [f,g,h] (\xi)
\lesssim I_1(\xi) + I_2(\xi)
\end{equation*}
with
$$
I_1(\xi) := \left\{ \int_0^\infty \| \widehat h(t,\xi) \|_{L^2_v(m)} \int_{\R^3} \| \widehat f (t,\xi-\eta) \|_{H^{s,*}_v(m)} \| \widehat g (t,\eta) \|_{H^{s,*}_v(m)} \d \eta \d t \right\}^{1/2}
$$
and
$$
I_2(\xi) := \left\{ \int_0^\infty \| \widehat h(t,\xi) \|_{H^{s,*}_v(m)} \int_{\R^3} \| \widehat f (t,\xi-\eta) \|_{L^2_v(m)} \| \widehat g (t,\eta) \|_{H^{s,*}_v(\la v \ra^{2s} m)} \d \eta \d t \right\}^{1/2}.
$$
On the one hand we have
\begin{equation*}
\begin{aligned}
I_1(\xi)^2 
&\lesssim  \| \widehat h(\xi) \|_{L^\infty_t L^2_v(m)} \left\{\int_0^\infty \int_{\R^3} \| \widehat f (t,\xi-\eta) \|_{H^{s,*}_v(m)} \| \widehat g (t,\eta) \|_{H^{s,*}_v(m)} \d \eta \d t \right\} \\
&\lesssim  \| \widehat h(\xi) \|_{L^\infty_t L^2_v(m)}  \int_{\R^3} \left[ \int_0^\infty \| \widehat f (t,\xi-\eta) \|_{H^{s,*}_v(m)}^2 \d t\right]^{1/2} \left[ \int_0^\infty \| \widehat g (t,\eta) \|_{H^{s,*}_v(m)}^2 \d t \right]^{1/2} \d \eta  
\\
&\lesssim  \| \widehat h(\xi) \|_{L^\infty_t L^2_v(m)}  \int_{\R^3} \| \widehat f (\xi-\eta) \|_{L^2_t H^{s,*}_v(m)}  \| \widehat g (\eta) \|_{L^2_t H^{s,*}_v(m)}  \d \eta ,
\end{aligned}
\end{equation*}
thanks to H\"older and Cauchy-Schwarz inequalities. Moreover, using the same inequalities, we also obtain
\begin{equation*}
\begin{aligned}
I_2(\xi)^2 
&\lesssim
\| \widehat h(\xi) \|_{L^2_t H^{s,*}_v(m)}  \left\{ \int_0^\infty \left[ \int_{\R^3} \| \widehat f (t,\xi-\eta) \|_{L^2_v(m)} \| \widehat g (t,\eta) \|_{H^{s,*}_v(\la v \ra^{2s} m)} \d \eta \right]^2 \d t \right\}^{1/2} \\
&\lesssim
\| \widehat h(\xi) \|_{L^2_t H^{s,*}_v(m)}   \int_{\R^3}  \left[ \int_0^\infty  \| \widehat f (t,\xi-\eta) \|_{L^2_v(m)}^2 \| \widehat g (t,\eta) \|_{H^{s,*}_v(\la v \ra^{2s} m)}^2 \d t \right]^{1/2} \d \eta  \\
&\lesssim
\| \widehat h(\xi) \|_{L^2_t H^{s,*}_v(m)}  \int_{\R^3}    \| \widehat f (\xi-\eta) \|_{L^\infty_t L^2_v(m)} \| \widehat g (\eta) \|_{L^2_t H^{s,*}_v(\la v \ra^{2s} m)}  \d \eta .
\end{aligned}
\end{equation*}
Taking the $L^p_\xi$ norms of each term yields
\begin{equation}\label{eq:I1_Lp_xi}
\begin{aligned}
\| I_1 \|_{L^p_\xi} 
&\lesssim  
\left\| \| \widehat h(\xi) \|_{L^\infty_t L^2_v(m)}^{1/2}  \left\{ \int_{\R^3} \| \widehat f (\xi-\eta) \|_{L^2_t H^{s,*}_v(m)}  \| \widehat g (\eta) \|_{L^2_t H^{s,*}_v(m)}  \d \eta \right\}^{1/2} \right\|_{L^p_\xi}\\
&\lesssim   
\| \widehat f  \|_{L^p_\xi L^2_t H^{s,*}_v(m)}^{1/2}  \| \widehat g  \|_{L^1_\xi L^2_t H^{s,*}_v(m)}^{1/2}  \| \widehat h(\xi) \|_{L^p_\xi L^\infty_t L^2_v(m)}^{1/2}  ,
\end{aligned}
\end{equation}
as well as
\begin{equation}\label{eq:I2_Lp_xi}
\begin{aligned}
\| I_2 \|_{L^p_\xi} 
&\lesssim  
\left\| \| \widehat h(\xi) \|_{L^2_t H^{s,*}_v(m)}^{1/2}  \left\{ \int_{\R^3} \| \widehat f (\xi-\eta) \|_{L^\infty_t L^2_v(m)}  \| \widehat g (\eta) \|_{L^2_t H^{s,*}_v(\la v \ra^{2s} m)}  \d \eta \right\}^{1/2} \right\|_{L^p_\xi}\\
&\lesssim   
\| \widehat f  \|_{L^p_\xi L^\infty_t L^2_v(m)}^{1/2}  \| \widehat g  \|_{L^1_\xi L^2_t H^{s,*}_v(\la v \ra^{2s} m)}^{1/2}  \| \widehat h(\xi) \|_{L^p_\xi L^2_t H^{s,*}_v(m)}^{1/2}  ,
\end{aligned}
\end{equation}
where we have used Hölder's inequality in $L^{2p}_\xi \times L^{2p}_\xi$ followed by Young's inequality in~$L^p_\xi \times L^1_\xi$. Gathering previous estimates concludes the proof of \eqref{eq:Nm_fgh}.

\medskip\noindent
\textit{Step 2: Proof of \eqref{eq:Nm_fgh_exp}.}
Starting from estimate \eqref{eq:Q_hatf_hatg_hath}, we first observe that
\begin{equation}\label{eq:hatg_decomposition}
\begin{aligned}
\| \widehat g (\eta) \|_{H^{s,*}_v(m)}  
\lesssim \| \widehat g (\eta) \|_{H^{s,*}_v(\la v \ra^{2s} m)} 
&\lesssim \| \widehat{g}^\perp (\eta) \|_{H^{s,*}_v(\la v \ra^{2s} m)} + \| \pi \widehat{g}(\eta) \|_{L^2_v(\la v \ra^{2s} m)} \\
&\lesssim \| \widehat{g}^\perp (\eta) \|_{H^{s,*}_v(\mu^{-1/2})} + \| \pi \widehat{ g}(\eta) \|_{L^2_v(\mu^{-1/2})} \\
&\lesssim \| \widehat{g} (\eta) \|_{H^{s,**}_v(\mu^{-1/2})} + \| \widehat{g}(\eta) \|_{L^2_v(\mu^{-1/2})} .
\end{aligned}
\end{equation}
On the other hand, we also remark that
\begin{align}
\| \widehat f (\xi-\eta) \|_{L^2_v(\la v \ra^{\gamma/2} m)} 
&\le \min\left\{ \| \widehat f (\xi-\eta) \|_{L^2_v(m)} , \| \widehat f (\xi-\eta) \|_{H^{s,*}_v(m)} \right\} \label{eq:hatf_min_L2_Hs}\\
\| \widehat h(\xi) \|_{L^2_v(\la v \ra^{\gamma/2} m)} 
&\le \min\left\{ \| \widehat h(\xi) \|_{L^2_v(m)} , \| \widehat h(\xi) \|_{H^{s,*}_v(m)} \right\} \label{eq:hath_min_L2_Hs}.
\end{align}
Therefore
\begin{equation*}
\begin{aligned}
\left|\left\la \widehat Q (  f ,  g) (\xi) , \widehat h(\xi) \right\ra_{L^2_v(m)} \right|
&\lesssim
\| \widehat h(\xi) \|_{L^2_v(m)} \int_{\R^3} \| \widehat f (\xi-\eta) \|_{H^{s,*}_v(m)} \| \widehat{g} (\eta) \|_{H^{s,**}_v(\mu^{-1/2})} \d \eta  \\
&\quad
+\| \widehat h(\xi) \|_{H^{s,*}_v(m)}  \int_{\R^3} \| \widehat f (\xi-\eta) \|_{H^{s,*}_v(m)} \| \widehat{g}(\eta) \|_{L^2_v(\mu^{-1/2})} \d \eta  \\
&\quad
+ \| \widehat h(\xi) \|_{H^{s,*}_v(m)} \int_{\R^3} \| \widehat f (\xi-\eta) \|_{L^2_v(m)} \| \widehat{g} (\eta) \|_{H^{s,**}_v(\mu^{-1/2})} \d \eta ,
\end{aligned}
\end{equation*}
which implies
\begin{equation*}
\NN_m [f,g,h] (\xi)
\lesssim T_1(\xi) + T_2(\xi) + T_3(\xi) 
\end{equation*}
with
$$
T_1(\xi) := \left\{ \int_0^\infty \| \widehat h(t,\xi) \|_{L^2_v(m)} \int_{\R^3} \| \widehat f (t,\xi-\eta) \|_{H^{s,*}_v(m)} \| \widehat{g} (t,\eta) \|_{H^{s,**}_v(\mu^{-1/2})} \d \eta  \d t \right\}^{1/2} ,
$$
$$
T_2(\xi) := \left\{ \int_0^\infty \| \widehat h(t, \xi) \|_{H^{s,*}_v(m)}  \int_{\R^3} \| \widehat f (t, \xi-\eta) \|_{H^{s,*}_v(m)} \| \widehat{g}(t, \eta) \|_{L^2_v(\mu^{-1/2})} \d \eta  \d t \right\}^{1/2}, 
$$
and
$$
T_3(\xi) := \left\{ \int_0^\infty  \| \widehat h(t, \xi) \|_{H^{s,*}_v(m)} \int_{\R^3} \| \widehat f (t, \xi-\eta) \|_{L^2_v(m)} \| \widehat{g} (t, \eta) \|_{H^{s,**}_v(\mu^{-1/2})} \d \eta \d t \right\}^{1/2}.
$$

For the term $T_1$ we can argue exactly as for obtaining estimate \eqref{eq:I1_Lp_xi} for $I_1$ in Step~2, which yields
$$
\| T_1 \|_{L^p_\xi} \lesssim \| \widehat f  \|_{L^p_\xi L^2_t H^{s,*}_v(m)}^{1/2}  \| \widehat g  \|_{L^1_\xi L^2_t H^{s,**}_v(\mu^{-1/2})}^{1/2}  \| \widehat h(\xi) \|_{L^p_\xi L^\infty_t L^2_v(m)}^{1/2} .
$$
Finally, for the terms $T_2$ and $T_3$ we can argue exactly as for obtaining estimate \eqref{eq:I2_Lp_xi} for $I_2$ in Step~2, thus we obtain
$$
\| T_2 \|_{L^p_\xi} 
\lesssim  \| \widehat f  \|_{L^1_\xi L^2_t H^{s,*}_v(m)}^{1/2}  \| \widehat g  \|_{L^p_\xi L^\infty_t L^2_v(\mu^{-1/2})}^{1/2}  \| \widehat h(\xi) \|_{L^p_\xi L^2_t H^{s,*}_v(m)}^{1/2}
$$
and
$$
\| T_3 \|_{L^p_\xi} 
\lesssim  \| \widehat f  \|_{L^p_\xi L^\infty_t L^2_v(m)}^{1/2}  \| \widehat g  \|_{L^1_\xi L^2_t H^{s,**}_v(\mu^{-1/2})}^{1/2}  \| \widehat h(\xi) \|_{L^p_\xi L^2_t H^{s,*}_v(m)}^{1/2}.
$$
The proof of \eqref{eq:Nm_fgh_exp} is then complete by gathering previous estimates.

\medskip\noindent
\textit{Step 3: Proof of \eqref{eq:Nm_gfh_exp}.} 
Thanks to estimate \eqref{eq:est_Q_f_g_h} we obtain
$$
\begin{aligned}
\left|\left\la \widehat Q (  g ,  f) (\xi) , \widehat h(\xi) \right\ra_{L^2_v(m)} \right| 
&\lesssim \| \widehat h(\xi) \|_{L^2_v(\la v \ra^{\gamma/2} m)} \int_{\R^3} \| \widehat f (\xi-\eta) \|_{H^{s,*}_v(m)} \| \widehat g (\eta) \|_{H^{s,*}_v(m)} \d \eta  \\
&\quad
+ \| \widehat h(\xi) \|_{H^{s,*}_v(m)} \int_{\R^3} \| \widehat f (\xi-\eta) \|_{H^{s,*}_v(\la v \ra^{2s} m)} \| \widehat g (\eta) \|_{L^2_v(\la v \ra^{\gamma/2} m)} \d \eta .
\end{aligned}
$$
As in Step 2, we use \eqref{eq:hatg_decomposition} and \eqref{eq:hath_min_L2_Hs} to obtain
$$
\begin{aligned}
\left|\left\la \widehat Q (  g ,  f) (\xi) , \widehat h(\xi) \right\ra_{L^2_v(m)} \right| 
&\lesssim 
\| \widehat h(\xi) \|_{L^2_v(m)} \int_{\R^3} \| \widehat f (\xi-\eta) \|_{H^{s,*}_v(m)} \| \widehat g (\eta) \|_{H^{s,**}_v(\mu^{-1/2})} \d \eta  \\
&\quad
+ \| \widehat h(\xi) \|_{H^{s,*}_v(m)} \int_{\R^3} \| \widehat f (\xi-\eta) \|_{H^{s,*}_v(\la v \ra^{2s} m)} \| \widehat g (\eta) \|_{L^2_v(\mu^{-1/2})} \d \eta .
\end{aligned}
$$
We then prove \eqref{eq:Nm_gfh_exp} by arguing as in previous steps.
\end{proof}

We now provide nonlinear estimates involving exponential weights. For $q \ge 0$ we define
\begin{equation}\label{eq:def-tildeNq}
\widetilde{\NN}_q [f,g,h] (\xi) = \left\{ \int_0^\infty \left|\left\la \!\! \left\la \widehat Q (  f(t) ,  g(t)) (\xi) , \widehat h(t,\xi) \right\ra \!\! \right\ra_{L^2_v \left(\la v \ra^q \mu^{-1/2} \right)} \right|  \d t  \right\}^{1/2}
\end{equation}
where we recall that $ \la \! \la \cdot , \cdot \ra \! \ra_{L^2_v \left(\la v \ra^q \mu^{-1/2} \right) }$ is defined in \eqref{eq:equivalent_inner_product}, and that
$$\widehat{Q}(f, g)(\xi) = \int_{\R^3} Q\left( \widehat{f}(\xi - \eta) , \widehat{g}(\eta) \right) \d \eta .
$$
Before proving the result on $\widetilde{\NN}_q$, we state the following nonlinear estimate for $Q$ in the new inner product:

\begin{lem}
	\label{lem:est_Q_fgauss_ggauss_hgauss}
	Let $q \ge 0$ be fixed.
	For any $f=f(x, v)$, $g=g(x, v)$ and $h=h(x, v)$ such that $\widehat{f}(\xi), \widehat{g}(\xi), \widehat{h}(\xi) \in L^2_v \cap H^{s,**}_v(\la v \ra^q \mu^{-1/2})$ one has uniformly in $\xi \in \R^3$
	\begin{equation}
		\label{eq:est_Q_f_g_h_gauss_fourier}
		\begin{aligned}
			\dlla \widehat{Q}( f, & g)(\xi), \widehat{h}(\xi) \drra_{ L^2_v \left( \la v \ra^q \mu^{-1/2} \right) } \\
			\lesssim & \| \widehat{h}(\xi) \|_{H^{s, **}_v \left( \la v \ra^q \mu^{-1/2} \right) } \int_{\R^3} \Big( \| \widehat{f}(\xi - \eta) \|_{ L^2_v\left( \la v \ra^q \mu^{-1/2} \right) } \| \widehat{g}(\eta) \|_{ H^{s, **}_v\left( \la v \ra^q \mu^{-1/2} \right) } \\
			& \qquad \qquad \qquad \qquad \qquad \quad + \| \widehat{f}(\xi - \eta) \|_{ H^{s, **}_v\left( \la v \ra^q \mu^{-1/2} \right) } \| \widehat{g}(\eta) \|_{ L^2_v\left( \la v \ra^q \mu^{-1/2} \right) } \Big) \d \eta \\
			& + \| \widehat{h}(\xi) \|_{H^{s, **}_v \left( \la v \ra^q \mu^{-1/2} \right) } \int_{\R^3} \Big( \| \widehat{f}(\xi - \eta) \|_{ L^2_v\left( \la v \ra^q \mu^{-1/2} \right) } \| \pi \widehat{g}(\eta) \|_{ L^2_v } \\
			& \qquad \qquad \qquad \qquad \qquad \qquad + \| \pi \widehat{f}(\xi - \eta) \|_{ L^2_v } \| \widehat{g}(\eta) \|_{ L^2_v\left( \la v \ra^q \mu^{-1/2} \right) } \Big) \d \eta.
		\end{aligned}
	\end{equation}
\end{lem}

\begin{proof}
	We first recall the nonlinear estimates coming respectively from \cite[Theorem 1.2]{AMUXY2012} and \cite[Proposition 3.13]{AMUXY2012}, both in a crude form using the assumption $\gamma + 2 s < 0$:
	\begin{align*}
		\la Q(f_1, f_2), f_3 \ra_{ L^2_v\left( \mu^{-1/2} \right) } \lesssim \| f_3 \|_{H^{s, *}_v\left( \mu^{-1/2} \right) } \Big( & \| f_1 \|_{H^{s, *}_v\left( \mu^{-1/2} \right) } \| f_2 \|_{ L^2_v \left( \mu^{-1/2} \right) } \\
		& +  \| f_1 \|_{ L^2_v \left( \mu^{-1/2} \right) } \| f_2 \|_{H^{s, *}_v\left( \mu^{-1/2} \right) } \Big)
	\end{align*}
	\begin{align*}
		\big| \la Q(f_1, f_2), f_3 \ra_{L^2\left( \la v \ra^q \mu^{-1/2} \right) } &- \la Q(f_1, \la v \ra^q f_2), \la v \ra^q f_3 \ra_{L^2\left( \mu^{-1/2} \right) } \big| \\
		 & \lesssim \| f_3 \|_{ H^{s, *}_v \left( \la v \ra^q \mu^{-1/2} \right) } \| f_1 \|_{ L^2_v\left( \la v \ra^q \mu^{-1/2} \right) } \| f_2 \|_{ L^2_v\left( \la v \ra^q \mu^{-1/2} \right) },
	\end{align*}
	which together yield
	\begin{align*}
		\la Q(f_1, f_2), f_3 \ra_{ L^2_v\left( \la v \ra^q \mu^{-1/2} \right) } \lesssim \| f_3 \|_{H^{s, *}_v\left( \la v \ra^q \mu^{-1/2} \right) } \Big( & \| f_1 \|_{H^{s, *}_v\left( \la v \ra^q \mu^{-1/2} \right) } \| f_2 \|_{ L^2_v \left( \la v \ra^q \mu^{-1/2} \right) } \\
		& +  \| f_1 \|_{ L^2_v \left( \la v \ra^q \mu^{-1/2} \right) } \| f_2 \|_{H^{s, *}_v\left( \la v \ra^q \mu^{-1/2} \right) } \Big).
	\end{align*}
	Let us turn to the estimates in term of $\dlla \cdot, \cdot \drra_{L^2_v ( \la v \ra^q \mu^{-1/2} )}$. Concerning the $\Psi$--estimate, note that since~$\pi Q = 0$, the bilinear term $\Psi\left[ Q(\widehat{f}(\xi - \eta), \widehat{g}(\eta) ) , \widehat{h}(\xi) \right]$ reduces to
	\begin{align*}
		\Psi\Big[ Q\left(\widehat{f}(\xi - \eta), \widehat{g}(\eta) \right) , \widehat{h}(\xi) \Big]
		= & \frac{i \kappa_1 }{1 + | \xi |^2} \xi \widehat{\theta}_h(\xi) \cdot \Upsilon\left[ Q\left(\widehat{f}(\xi - \eta), \widehat{g}(\eta) \right) \right] \\
		& + \frac{i \kappa_2 }{1 + | \xi |^2} \left( \xi \otimes \widehat{u}_h(\xi) \right) : \Theta\left[ Q\left(\widehat{f}(\xi - \eta), \widehat{g}(\eta) \right) \right],
	\end{align*}
	which is then easily estimated as (where $p=p(v)$ is some polynomial)
	\begin{align*}
		\Psi\Big[ Q\big(\widehat{f}(\xi - \eta)&, \widehat{g}(\eta) \big) , \widehat{h}(\xi) \Big](\xi) \\
		\lesssim & \frac{|\xi|}{1+| \xi|} \| \pi \widehat{h}(\xi) \|_{L^2_v\left( \mu^{-1/2} \right)} \times \frac{1}{1+|\xi|} \left| \left\la p \mu, Q\left(\widehat{f}(\xi - \eta), \widehat{g}(\eta) \right) \right\ra_{ L^2_v \left( \mu^{-1/2} \right) } \right| \\
		\lesssim & \| \widehat{h}(\xi) \|_{H^{s, **}_v \left( \la v \ra^q \mu^{-1/2}  \right) } \Big( \|\widehat{f}(\xi - \eta)\|_{ H^{s, *}_v\left( \mu^{-1/2} \right) } \| \widehat{g}(\eta) \|_{ L^2_v\left( \mu^{-1/2} \right) } \\
		& \qquad \qquad \qquad \qquad \qquad + \|\widehat{f}(\xi - \eta)\|_{ L^2_v\left( \mu^{-1/2} \right) } \| \widehat{g}(\eta) \|_{ H^{s, *}_v\left( \mu^{-1/2} \right) } \Big),
	\end{align*}
	and using the fact that
	\begin{align*}
		\| \widehat{\varphi}(\xi) \|_{H^{s, *}_v \left( \la v \ra^q \mu^{-1/2} \right) } & \lesssim \| \pi \widehat{\varphi}(\xi) \|_{L^2_v} + \| \widehat{\varphi}^\perp(\xi) \|_{H^{s, *}_v \left( \la v \ra^q \mu^{-1/2} \right) } \\
		& \lesssim \| \pi \widehat{\varphi}(\xi) \|_{L^2_v} + \| \widehat{\varphi}(\xi) \|_{H^{s, **}_v \left( \la v \ra^q \mu^{-1/2} \right) },
	\end{align*}
	we actually have
	\begin{align*}
		\Psi\Big[ Q\big(\widehat{f}(\xi - \eta)&, \widehat{g}(\eta) \big) , \widehat{h}(\xi) \Big](\xi) \\
		\lesssim & \| \widehat{h}(\xi) \|_{H^{s, **}_v \left( \la v \ra^q \mu^{-1/2} \right) } \Big( \| \widehat{f}(\xi - \eta) \|_{ H^{s, **}_v\left( \la v \ra^q \mu^{-1/2} \right) } \| \widehat{g}(\eta) \|_{ L^2_v\left( \la v \ra^q \mu^{-1/2} \right) } \\
		& \qquad \qquad \qquad \qquad \quad + \| \widehat{f}(\xi - \eta) \|_{ L^2_v\left( \la v \ra^q \mu^{-1/2} \right) } \| \widehat{g}(\eta) \|_{ H^{s, **}_v\left( \la v \ra^q \mu^{-1/2} \right) } \Big) \\
		& + \| \widehat{h}(\xi) \|_{H^{s, **}_v \left( \la v \ra^q \mu^{-1/2} \right) } \Big( \| \widehat{f}(\xi - \eta) \|_{ L^2_v\left( \la v \ra^q \mu^{-1/2} \right) } \| \pi \widehat{g}(\eta) \|_{ L^2_v } \\
		& \qquad \qquad \qquad \qquad \qquad \quad + \| \pi \widehat{f}(\xi - \eta) \|_{ L^2_v } \| \widehat{g}(\eta) \|_{ L^2_v\left( \la v \ra^q \mu^{-1/2} \right) } \Big).
	\end{align*}
	Next, using again $\pi Q = 0$, i.e. $Q^\perp = Q$, we have the term for $| \xi | \le 1$:
	\begin{align*}
		\mathbf{1}_{| \xi | \le 1} \Big\la & Q\left(\widehat{f}(\xi - \eta), \widehat{g}(\eta) \right), \widehat{h}^\perp(\xi) \Big\ra_{ L^2_v\left( \la v \ra^q \mu^{-1/2} \right) } \\
		\lesssim &\mathbf{1}_{| \xi | \le 1} \| \widehat{h}^\perp(\xi) \|_{H^{s, *}_v\left( \la v \ra^q \mu^{-1/2} \right) } \Big( \| \widehat{f}(\xi - \eta) \|_{H^{s, *}_v\left( \la v \ra^q \mu^{-1/2} \right) } \| \widehat{g}(\eta) \|_{ L^2_v \left( \la v \ra^q \mu^{-1/2} \right) } \\
		& \qquad \qquad \qquad \qquad \quad +  \| \widehat{f}(\xi - \eta) \|_{ L^2_v \left( \la v \ra^q \mu^{-1/2} \right) } \| \widehat{g}(\eta) \|_{H^{s, *}_v\left( \la v \ra^q \mu^{-1/2} \right) } \Big) \\
		\lesssim & \mathbf{1}_{| \xi | \le 1} \| \widehat{h}(\xi) \|_{H^{s, **}_v \left( \la v \ra^q \mu^{-1/2} \right) } \Big( \| \widehat{f}(\xi - \eta) \|_{ H^{s, **}_v\left( \la v \ra^q \mu^{-1/2} \right) } \| \widehat{g}(\eta) \|_{ L^2_v\left( \la v \ra^q \mu^{-1/2} \right) } \\
		& \qquad \qquad \qquad \qquad \quad + \| \widehat{f}(\xi - \eta) \|_{ L^2_v\left( \la v \ra^q \mu^{-1/2} \right) } \| \widehat{g}(\eta) \|_{ H^{s, **}_v\left( \la v \ra^q \mu^{-1/2} \right) } \Big) \\
		& + \| \widehat{h}(\xi) \|_{H^{s, **}_v \left( \la v \ra^q \mu^{-1/2} \right) } \Big( \| \widehat{f}(\xi - \eta) \|_{ L^2_v\left( \la v \ra^q \mu^{-1/2} \right) } \| \pi \widehat{g}(\eta) \|_{ L^2_v } \\
		& \qquad \qquad \qquad \qquad \qquad \quad + \| \pi \widehat{f}(\xi - \eta) \|_{ L^2_v } \| \widehat{g}(\eta) \|_{ L^2_v\left( \la v \ra^q \mu^{-1/2} \right) } \Big)
	\end{align*}
	and, using that $\frac{|\xi|}{1+|\xi|^2} \approx 1$ for $| \xi | \ge 1$:
	\begin{align*}
		\mathbf{1}_{| \xi | \ge 1} \Big\la & Q\big(\widehat{f}(\xi - \eta), \widehat{g}(\eta) \big) , \widehat{h}(\xi) \Big\ra_{ L^2_v\left( \la v \ra^q \mu^{-1/2} \right) } \\
		\lesssim & \mathbf{1}_{| \xi | \ge 1}  \| h \|_{H^{s, *}_v\left( \la v \ra^q \mu^{-1/2} \right) } \Big( \| \widehat{f}(\xi - \eta) \|_{H^{s, *}_v\left( \la v \ra^q \mu^{-1/2} \right) } \| \widehat{g}(\eta) \|_{ L^2_v \left( \la v \ra^q \mu^{-1/2} \right) } \\
		& \qquad \qquad \qquad \qquad \qquad +  \| \widehat{f}(\xi - \eta) \|_{ L^2_v \left( \la v \ra^q \mu^{-1/2} \right) } \| \widehat{g}(\eta) \|_{H^{s, *}_v\left( \la v \ra^q \mu^{-1/2} \right) } \Big) \\
		\lesssim & \| \widehat{h}(\xi) \|_{H^{s, **}_v \left( \la v \ra^q \mu^{-1/2} \right) } \Big( \| \widehat{f}(\xi - \eta) \|_{ H^{s, **}_v\left( \la v \ra^q \mu^{-1/2} \right) } \| \widehat{g}(\eta) \|_{ L^2_v\left( \la v \ra^q \mu^{-1/2} \right) } \\
		& \qquad \qquad \qquad \qquad \quad + \| \widehat{f}(\xi - \eta) \|_{ L^2_v\left( \la v \ra^q \mu^{-1/2} \right) } \| \widehat{g}(\eta) \|_{ H^{s, **}_v\left( \la v \ra^q \mu^{-1/2} \right) } \Big) \\
		& + \| \widehat{h}(\xi) \|_{H^{s, **}_v \left( \la v \ra^q \mu^{-1/2} \right) } \Big( \| \widehat{f}(\xi - \eta) \|_{ L^2_v\left( \la v \ra^q \mu^{-1/2} \right) } \| \pi \widehat{g}(\eta) \|_{ L^2_v } \\
		& \qquad \qquad \qquad \qquad \qquad \quad + \| \pi \widehat{f}(\xi - \eta) \|_{ L^2_v } \| \widehat{g}(\eta) \|_{ L^2_v\left( \la v \ra^q \mu^{-1/2} \right) } \Big).
	\end{align*}
	We concludes by gathering previous estimates.
\end{proof}

We can now estimate $\widetilde N_q$.

\begin{prop}\label{prop:tildeNq_fgh}
Consider some $q \ge 0$, as well as some $p \in [1, \infty]$ and $r \in (3/2, \infty]$. For any~$h \in L^p_\xi L^\infty_t L^2_v(\la v \ra^q \mu^{-1/2})$ and $f,g \in L^p_\xi L^\infty_t L^2_v(\la v \ra^q \mu^{-1/2}) \cap(L^1_\xi \cap L^p_\xi) L^2_t H^{s,**}_v(\la v \ra^q\mu^{-1/2})$, there holds
\begin{equation}\label{eq:tildeNq_fgh_exp}
\begin{aligned}
\| & \widetilde \NN_q [f, g, h] \|_{L^p_\xi} \\
&\lesssim \| \widehat h \|_{L^p_\xi L^2_t H^{s,**}_v(\la v \ra^q \mu^{-1/2})}^{1/2} \Big( 
\| \widehat f \|_{L^p_\xi L^\infty_t L^2_v (\la v \ra^q \mu^{-1/2})}^{1/2} 
\|  \widehat g \|_{\left(L^1_\xi \cap L^r_\xi\right) L^2_t H^{s,**}_v(\la v \ra^q \mu^{-1/2})}^{1/2} 
 \\
&
\qquad \qquad  \qquad\qquad \qquad \quad + \| \widehat f \|_{\left(L^1_\xi \cap L^r_\xi\right) L^2_t H^{s,**}_v (\la v \ra^q \mu^{-1/2})}^{1/2} 
\| \widehat g \|_{L^p_\xi L^\infty_t L^{2}_v(\la v \ra^q \mu^{-1/2})}^{1/2} \Big).
\end{aligned}
\end{equation}

\end{prop}

\begin{proof}
	We have from \eqref{eq:est_Q_f_g_h_gauss_fourier} the bound
	$$\| \widetilde{\NN}_q [f, g, h] \|_{ L^p_\xi } \lesssim \| I_1 \|_{ L^p_\xi }^{1/2} + \| I_2 \|_{ L^p_\xi }^{1/2} + \| J_1 \|_{ L^p_\xi }^{1/2} + \| J_2 \|_{ L^p_\xi }^{1/2}$$
	where each term is defined as
	\begin{align*}
		I_1(\xi) := \int_{\R^3} \int_0^\infty \| \widehat{h}(t, \xi) \|_{H^{s, **}_v \left( \la v \ra^q \mu^{-1/2}  \right) } \| \widehat{f}(t, \xi - \eta) \|_{ L^2_v\left( \mu^{-1/2} \right) } \| \widehat{g}(t, \eta) \|_{ H^{s, **}_v\left( \la v \ra^q \mu^{-1/2}  \right) } \d t \d \eta
	\end{align*}
	\begin{gather*}
		I_2(\xi)^2 := \int_{\R^3} \int_0^\infty \| \widehat{h}(t, \xi) \|_{H^{s, **}_v \left( \la v \ra^q \mu^{-1/2}  \right) } \| \widehat{f}(t, \xi - \eta) \|_{ H^{s, **}_v\left( \la v \ra^q \mu^{-1/2}  - \eta \right) } \| \widehat{g}(t, \eta) \|_{ L^2_v\left( \la v \ra^q \mu^{-1/2} \right) } \d t \d \eta , \\
		J_1(\xi)^2 := \int_{\R^3} \int_0^\infty  \| \widehat{h}(t, \xi) \|_{H^{s, **}_v \left( \la v \ra^q \mu^{-1/2}  \right) }  \| \widehat{f}(t , \xi - \eta) \|_{ L^2_v\left( \la v \ra^q \mu^{-1/2} \right) } \| \pi \widehat{g}(t, \eta) \|_{ L^2_v } \d t \d \eta, \\
		J_2(\xi)^2 := \int_{\R^3} \int_0^\infty  \| \widehat{h}(t, \xi) \|_{H^{s, **}_v \left( \la v \ra^q \mu^{-1/2}  \right) }  \| \pi \widehat{f}(t, \xi - \eta) \|_{ L^2_v } \| \widehat{g}(t, \eta) \|_{ L^2_v\left( \la v \ra^q \mu^{-1/2} \right) } \d t \d \eta.
	\end{gather*}
	Using Hölder's inequality in $L^2_t \times L^2_t \times L^\infty_t$, we have
	$$I_1(\xi)^2 \lesssim \| \widehat{h}(\xi) \|_{L^2_t H^{s, **}_v \left( \la v \ra^q \mu^{-1/2}  \right) } \int_{\R^3} \| \widehat{g}(\xi - \eta) \|_{ L^2_t H^{s, **}_v\left( \la v \ra^q \mu^{-1/2} \right) } \| \widehat{f}(\eta) \|_{ L^\infty_t L^2_v\left( \la v \ra^q \mu^{-1/2} \right) } \d \eta,$$
	and then, by Hölder's inequality in $L^{2 p}_\xi \times L^{2 p }_\xi$
	$$\| I_1\|_{L^p_\xi} \lesssim \| \widehat{h} \|_{L^p_\xi L^2_t H^{s, **}_v \left( \la v \ra^q \mu^{-1/2} \right) }^{1/2} \left\| \int_{\R^3} \| \widehat{g}(\xi - \eta) \|_{ L^2_t H^{s, **}_v\left( \la v \ra^q \mu^{-1/2}  \right) } \| \widehat{f}(\eta) \|_{ L^\infty_t L^2_v\left( \la v \ra^q \mu^{-1/2} \right) } \d \eta \right\|_{L^p_\xi}^{1/2},$$
	from which we deduce thanks to Young's convolution inequality in $L^1_\xi \times L^p_\xi \rightarrow L^p_\xi$
	$$\| I_1\|_{L^p_\xi} \lesssim \| \widehat{h} \|_{L^p_\xi L^2_t H^{s, **}_v \left( \la v \ra^q \mu^{-1/2} \right) }^{1/2} \| \widehat{g} \|_{ L^1_\xi L^2_t H^{s, **}_v\left( \la v \ra^q \mu^{-1/2} \right) }^{1/2} \| \widehat{f} \|_{ L^p_\xi  L^\infty_t L^2_v\left( \la v \ra^q \mu^{-1/2} \right) }^{1/2}.$$
	Similarly, there holds
	$$\| I_2\|_{L^p_\xi} \lesssim \| \widehat{h} \|_{L^p_\xi L^2_t H^{s, **}_v \left( \la v \ra^q \mu^{-1/2}  \right) }^{1/2} \| \widehat{f} \|_{ L^1_\xi L^2_t H^{s, **}_v\left( \la v \ra^q \mu^{-1/2} \right) }^{1/2} \| \widehat{g} \|_{ L^p_\xi  L^\infty_t L^2_v\left( \la v \ra^q \mu^{-1/2} \right) }^{1/2},$$
	$$\| J_1\|_{L^p_\xi} \lesssim \| \widehat{h} \|_{L^p_\xi L^2_t H^{s, **}_v \left( \la v \ra^q \mu^{-1/2} \right) }^{1/2} \| \pi \widehat{g} \|_{ L^1_\xi L^2_t L^2_v }^{1/2} \| \widehat{f} \|_{ L^p_\xi  L^\infty_t L^2_v\left( \la v \ra^q \mu^{-1/2} \right) }^{1/2},$$
	$$\| J_2 \|_{L^p_\xi} \lesssim \| \widehat{h} \|_{L^p_\xi L^2_t H^{s, **}_v \left( \la v \ra^q \mu^{-1/2}  \right) }^{1/2} \| \pi \widehat{f} \|_{ L^1_\xi L^2_t L^2_v }^{1/2} \| \widehat{g} \|_{ L^p_\xi  L^\infty_t L^2_v\left( \la v \ra^q \mu^{-1/2} \right) }^{1/2}.$$
	Finally, we notice that, since $r > 3/2$, we have that $\mathbf{1}_{| \xi | \le 1} | \xi |^{-1} \in L^{r'}_\xi$, thus
	\begin{align*}
		\| \pi \widehat{\varphi} \|_{ L^1_\xi L^2_t L^2_v\left( \la v \ra^q \mu^{-1/2} \right) } \le & \| \mathbf{1}_{| \xi | \le 1} \pi \widehat{\varphi} \|_{ L^1_\xi L^2_t L^2_v\left( \la v \ra^q \mu^{-1/2} \right) } + \| \mathbf{1}_{| \xi | \ge 1} \pi \widehat{\varphi} \|_{ L^1_\xi L^2_t L^2_v\left( \la v \ra^q \mu^{-1/2} \right) } \\
		\lesssim & \| \widehat{\varphi} \|_{ L^r_\xi L^2_t H^{s,**}_v\left( \la v \ra^q \mu^{-1/2} \right) } + \| \widehat{\varphi} \|_{ L^1_\xi L^2_t H^{s,**}_v\left( \la v \ra^q \mu^{-1/2} \right) },
	\end{align*}
	from which we deduce \eqref{eq:tildeNq_fgh_exp}. This concludes the proof.
\end{proof}

As a direct consequence of Lemma~\ref{lem:est_Q_fgauss_ggauss_hgauss} and the proof of Proposition~\ref{prop:tildeNq_fgh}, we also deduce nonlinear estimates for $Q$ in the exponentially weighted Sobolev spaces $\spg_q$, with $q \ge 0$, defined in \eqref{eq:def-bfE}, by using the new inner product $\dlla  \cdot , \cdot \drra_{\spg_q}$ defined in \eqref{eq:def-bfE-equivalent} and its associated norm $\Nt \cdot \Nt_{\spg_q}$, together with the space $\disg_q$ defined in \eqref{eq:def-bfE*}.

\begin{prop}\label{prop:est_inhom_Q_f_g_h_gaussian}
Let $q \ge 0$. For any $f,g,h \in \spg_q \cap \spg^*_q$ one has
	\begin{gather}
		\label{eq:est_Q_f_g_h_gauss}
		\dlla Q(f, g), h \drra_{\spg_q} \lesssim \| h \|_{\disg_q} \left( \| f \|_{ \spg_q } \| g \|_{ \disg_q } + \| f \|_{ \disg_q } \| g \|_{\spg_q} \right).
	\end{gather}
	Furthermore, for any smooth compactly supported function $\chi = \chi(v)$, we also have for any $\ell \ge 0$
	\begin{gather}
		\label{eq:est_regularized_twisted_naive}
		\dlla g, \chi f \drra_{\spg_q} \lesssim \| g \|_{\spg_q} \| f \|_{\spp(\la v \ra^\ell)},\\
		\label{eq:est_regularized_twisted}
		\dlla g, \chi f \drra_{\spg_q} \lesssim \| g \|_{\disg_q} \| f \|_{\spp( \la v \ra^\ell )} + \| \pi g \|_{L^2_{x, v}} \| f \|_{\dispa( \la v \ra^\ell )}.
	\end{gather}
\end{prop}

\section{Proof of the main result}
\label{scn:cauchy_th_non_cutoff}

This section is devoted to the proof of Theorem~\ref{theo:non-cutoff}. We fix 
\begin{equation}\label{eq:k}
k 
> 13/2 + 7|\gamma| / 2 + 8s 
\end{equation}
and define the weight function $m=\la v \ra^k$. We also consider $\theta , k_0 >0$ such that
\begin{equation}\label{eq:k0_theta}
1 < \theta < \frac{k - k_0}{|\gamma|} 
\quad \text{and} \quad
k_0 > 13/2 + 5 |\gamma| / 2 + 8s 
\end{equation}
and define the weight function $m_0 = \la v \ra^{k_0}$. Finally we define the weight functions 
\begin{equation}\label{eq:underline_m}
\underline{m} = m \la v \ra^{-2s} = \la v \ra^{k-2s}, \quad 
\underline{m_0} = m_0 \la v \ra^{-2s} = \la v \ra^{k_0-2s}.
\end{equation}
Observe that we have 
$$
k_0 - 2s > 13/2 + 5 |\gamma| / 2+ 6s 
$$
so that we may apply in the sequel Propositions~\ref{prop:est_inhom_Q_g_f_f} and~\ref{prop:est_inhom_Q_f_g_h} as well as Corollary~\ref{cor:w_diss_B} with the weight function $\underline{m_0}$, and consequently also with the weights $\underline m$, $m_0$ and $m$.

Drawing inspiration from \cite{BMM2019}, we seek a solution to \eqref{eq:main-pert} of the form
$$
f(t) = h(t) + g(t) \in \spp(m) + \spg
$$
where the two parts evolve according to the differential system
\begin{equation}\label{eq:systeme-hg}
	\begin{cases}
		\partial_t h = (\BB  - v \cdot \nabla_x) h + Q(h, h) + Q(g, h) + Q(h, g),\\
		\partial_t g = (\LLL - v \cdot \nabla_x) g + Q(g, g) + \AA h,\\
		h(0, x, v) = f_0(x, v), \quad g(0, x, v) = 0.
	\end{cases}
\end{equation}
We will construct a solution to this system by building a sequence of approximate solutions~$(h_N, g_N)_{N=0}^\infty$ initialized as $(h_0, g_0) = 0$ and defined inductively by
\begin{equation}\label{eq:systeme-hgN}
	\begin{cases}
		\partial_t h_{N+1} = (\BB  - v \cdot \nabla_x) h_{N+1} + Q(h_N, h_{N+1}) + Q(g_N, h_{N+1}) + Q(h_{N+1}, g_{N}),\\
		\partial_t g_{N+1} = (\LLL - v \cdot \nabla_x) g_{N+1} + Q(g_N, g_{N+1}) + \AA h_N,\\
		h_{N+1}(0, x, v) = f_0(x, v), \quad g_{N+1}(0, x, v) = 0.
	\end{cases}
\end{equation}
To do so, we introduce the functional spaces $\sppt(m)$, $\sppdt(m_0)$ and $\spgt_q$, with $q \ge 0$, as the spaces associated to the norms, respectively:
\begin{align}
	\| h \|_{\sppt(m)}^2 &:= \sup_{t \ge 0} \| h(t) \|_{\spp(m)}^2 + \int_0^\infty \| h(t) \|_{\dispa(m)}^2 \d t, \label{eq:XXXm}\\
	\| h \|_{\sppdt(m_0)}^2 &:= \sup_{t \ge 0} \, (1+t)^{2\theta} \| h(t) \|_{\spp\left( m_0 \right)}^2 + \int_0^\infty (1+t)^{2\theta} \| h(t) \|_{\dispa\left( m_0 \right)}^2 \d t, \label{eq:YYYm}\\
	\| g \|_{\spgt_q}^2 &:= \sup_{t \ge 0} \Nt g(t) \Nt_{\spg_q}^2 + \int_0^\infty \| g(t) \|_{\disg_q}^2 \d t, \label{eq:EEE}
\end{align}
where we recall that the norm $\Nt \cdot \Nt_{\spg_q}$ is defined in \eqref{eq:equivalent_inner_product} and it is equivalent to $\| \cdot \|_{\spg_q}$, and that the spaces $\spp(m)$, $\dispa(m)$, $\spg_q$ and $\disg_q$ are defined respectively in \eqref{eq:def-bfX}, \eqref{eq:def-bfX*}, \eqref{eq:def-bfE} and \eqref{eq:def-bfE*}. 
Similarly, we also consider the spaces $\sppt(\underline{m})$ and $\sppdt(\underline{m_0})$ defined respectively by \eqref{eq:XXXm} and \eqref{eq:YYYm} but with the weights $\underline{m}$ and $\underline{m_0}$.

\subsection{Stability of the scheme}\label{sec:scheme_stability}

In this subsection, we will show by induction that if the initial data satisfies
$$
\| f_0 \|_{\spp(m)} \le  \eps_0
$$
with $\eps_0>0$ small enough, then the following bound holds for all $N \ge 0$:
\begin{equation}
	\label{eq:simple_inductive_bound}
	 \| h_N \|_{\sppt(m)} + \| h_N \|_{\sppdt(m_0)} + \| g_N \|_{\spgt_q} \lesssim \| f_0 \|_{\spp(m)} \le \eps_0.
\end{equation}
This is of course true for $N = 0$. Assume this bound for some~$N \ge 0$ and let us deduce it for $N+1$. 


\subsubsection{Stability of $h_{N+1}$ in norm $\sppt(m)$}\label{sec:stability_hN_X}

We start with the first equation of \eqref{eq:systeme-hgN}. The weak coercivity estimate on $\BB-v\cdot \nabla_x$ from Corollary~\ref{cor:w_diss_B} gives for some $\lambda > 0$
\begin{gather*}
	\la (\BB-v\cdot \nabla_x) h_{N+1}, h_{N+1} \ra_{\spp(m)} \le -\lambda \| h_{N+1} \|_{\dispa(m)}^2,
\end{gather*}
and the nonlinear terms are estimated using Proposition~\ref{prop:est_inhom_Q_g_f_f}:
\begin{align*}
	\la Q(h_N, h_{N+1}), h_{N+1} \ra_{\spp(m)} \lesssim  & \| h_{N+1} \|_{\dispa(m)}^2 \| h_N \|_{\spp(m)} \\
	& +  \| h_{N+1} \|_{\spp(m)} \| h_{N+1} \|_{\dispa(m)} \| h_{N} \|_{\dispa(m)},
\end{align*}
\begin{align*}
	\la Q(g_N, h_{N+1}), h_{N+1} \ra_{\spp(m)} \lesssim & \| h_{N+1} \|_{\dispa(m)}^2 \| g_N \|_{\spg_q} \\
	& + \| h_{N+1} \|_{\spp(m)} \| h_{N+1} \|_{\dispa(m)} \| g_{N} \|_{\disg_q},
\end{align*}
as well as the bound \eqref{eq:est_inhom_Q_f_g_h_poly_gauss}:
\begin{align*}
	\la Q(h_{N+1}, g_N), h_{N+1} \ra_{\spp(m)} \lesssim& \| h_{N+1} \|_{\dispa(m)}^2 \| g_N \|_{\spg_q} \\
	& +  \| h_{N+1} \|_{\spp(m)} \| h_{N+1} \|_{\dispa(m)} \| g_N \|_{\disg_q}.
\end{align*}
To sum up, we have the following energy estimate for $h_{N+1}$:
\begin{align*}
	\frac{1}{2} \frac{\mathrm d}{\mathrm d t} \| h_{N+1} \|^2_{\spp(m)} +\lambda \| h_{N+1} \|_{\dispa(m)}^2 \lesssim & \| h_{N+1} \|_{\dispa(m)}^2 \left( \| g_N \|_{\spg_q} + \| h_N \|_{\spp(m)} \right) \\
	& + \| h_{N+1} \|_{\spp(m)} \| h_{N+1} \|_{\dispa(m)} \left(\| h_{N} \|_{\dispa(m)} + \| g_{N} \|_{\disg_q} \right).
\end{align*}
Integrating in time and simplifying by $\| h_{N+1} \|_{\sppt(m)}$, we obtain using the bounds \eqref{eq:simple_inductive_bound}
\begin{align*}
\| h_{N+1} \|_{\sppt(m)} \lesssim \eps_0 \| h_{N+1} \|_{\sppt(m)} + \| f_0 \|_{\spp(m)},
\end{align*}
which, assuming $\eps_0>0$ small enough, simplifies as
\begin{equation}
	\label{eq:integrated_stability_poly}
	\| h_{N+1} \|_{\sppt(m)} \lesssim  \| f_0 \|_{\spp(m)} \le \eps_0.
\end{equation}
This concludes this step.

\subsubsection{Stability of $h_{N+1}$ in norm $\sppdt(m_0)$}\label{sec:stability_hN_Y}
As for the estimates with the weight $m$, we have for some $\lambda > 0$
\begin{align*}
&\frac{1}{2} \frac{\mathrm d}{\mathrm d t} \| h_{N+1} \|^2_{\spp(m_0)} +\lambda \| h_{N+1} \|_{\dispa(m_0)}^2 \\
&\qquad \lesssim  
\| h_{N+1} \|_{\dispa(m_0)}^2 \left( \| g_N \|_{\spg_q} + \| h_N \|_{\spp(m_0)} \right) \\
&\qquad\quad 
+ \| h_{N+1} \|_{\spp(m_0)} \| h_{N+1} \|_{\dispa(m_0)} \left(\| h_{N} \|_{\dispa(m_0)} + \| g_{N} \|_{\disg_q}\right),
\end{align*}
which, using the bounds \eqref{eq:simple_inductive_bound} and assuming $\eps_0>0$ small enough in order to absorb the first term in the right-hand side by the left-hand side, simplifies as
\begin{align*}
& \frac{1}{2} \frac{\mathrm d}{\mathrm d t} \| h_{N+1} \|^2_{\spp(m_0)} + \frac{\lambda}{2} \| h_{N+1} \|_{\dispa(m_0)}^2 \\ 
&\qquad \lesssim  \| h_{N+1} \|_{\spp(m_0)} \| h_{N+1} \|_{\dispa(m_0)} \left(\| h_{N} \|_{\dispa(m_0)} + \| g_{N} \|_{\disg_q}\right),
\end{align*}
Moreover, since we have $\|\la v \ra^{\gamma / 2} h\|_{\spp(m_0)} \le \| h \|_{\dispa(m_0)}$, the following interpolation inequality holds for any $R > 0$:
\begin{equation}\label{eq:interpolation_Rt}
	\la R \ra^{-|\gamma|} \| h \|_{\spp(m_0)}^2 \le \| h \|_{\dispa(m_0)}^2 + \la R \ra^{-|\gamma|-2(k - k_0)} \| h \|_{\spp(m)}^2,
\end{equation}
thus, taking $\la R \ra = \left(\frac{\lambda}{4 \theta}\right)^{1 / |\gamma|} (1+t)^{1 / \gamma|}$, we have for all $t \ge 0$
\begin{equation*}
	 \frac{4 \theta}{\lambda} (1+t)^{-1} \| h \|_{\spp(m_0)}^2 \le \| h \|_{\dispa(m_0)}^2 + \left(\frac{4 \theta}{\lambda}\right)^{1 + \frac{2(k - k_0)}{|\gamma|}} (1+t)^{ -1 - \frac{2(k - k_0)}{|\gamma|} } \| h \|_{\spp(m)}^2.
\end{equation*}
We now plug this control in the energy estimate:
\begin{align*}
	&\frac{1}{2} \frac{\mathrm d}{\mathrm d t} \| h_{N+1} \|^2_{\spp(m_0)} + \frac{\lambda}{4} \| h_{N+1} \|_{\dispa(m_0)}^2 + \theta (1+t)^{-1} \| h_{N+1} \|_{\dispa(m_0)}^2\\
	&\qquad \lesssim  (1+t)^{-1-\frac{2(k - k_0)}{|\gamma|} } \| h_{N+1} \|_{\spp(m)}^2 \\
	&\qquad\quad 
	+ \| h_{N+1} \|_{\spp(m_0)} \| h_{N+1} \|_{\dispa(m_0)} \left(\| h_{N} \|_{\dispa(m_0)} + \| g_{N} \|_{\disg_q}\right),
\end{align*}
and then multiply both sides by $(1+t)^{2\theta}$:
\begin{align*}
	&\frac{\mathrm d}{\mathrm d t} \Big\{ (1+t)^{2\theta} \| h_{N+1} \|^2_{\spp(m_0)} \Big\}  + \frac{\lambda}{2} (1+t)^{2\theta} \| h_{N+1} \|_{\dispa(m_0)}^2 \\
	&\qquad \lesssim  (1+t)^{2\theta - 1 - \frac{2(k - k_0)}{|\gamma|} } \| h_{N+1} \|_{\spp(m)}^2 \\
	&\qquad\quad
	+ (1+t)^{2\theta} \| h_{N+1} \|_{\spp(m_0)} \| h_{N+1} \|_{\dispa(m_0)} \left(\| h_{N} \|_{\dispa(m_0)} + \| g_{N} \|_{\disg_q}\right).
\end{align*}
Integrating in time and using \eqref{eq:simple_inductive_bound}, we get
\begin{align*}
	\| h_{N+1} \|_{\sppdt(m_0)}^2 \lesssim  \| h_{N+1} \|_{\sppt(m)}^2 + \eps_0 \| h_{N+1} \|_{\sppdt(m_0)}^2 + \| f_0 \|_{\spp(m_0)}^2,
\end{align*}
where we used the fact that $2\theta - 1 - \frac{2(k - k_0)}{|\gamma|} < -1$ so that $(1+t)^{2\theta - 1 - \frac{2(k - k_0)}{|\gamma|} } $ is integrable. Assuming $\eps_0>0$ small enough and plugging in \eqref{eq:integrated_stability_poly}, we finally get
\begin{equation}
	\label{eq:integrated_stability_poly_decay}
	\| h_{N+1} \|_{\sppdt(m_0)} \lesssim \| f_0 \|_{\spp(m_0)} \le \eps_0.
\end{equation}
This concludes this step.

\subsubsection{Stability of $g_{N+1}$ in norm $\spgt_q$}\label{sec:stability_gN_E}

We now turn to the second equation of \eqref{eq:systeme-hgN}. The weak coercivity estimate from Corollary~\ref{cor:coercivity_Lambda_gauss} gives us for some $\lambda > 0$
\begin{equation*}
	\dlla (\LLL-v\cdot \nabla_x ) g_{N+1}, g_{N+1} \drra_{\spg_q} \le - \lambda \| g_{N+1} \|_{\disg_q}^2.
\end{equation*}
The nonlinear term is estimated using \eqref{eq:est_Q_f_g_h_gauss} of Proposition~\ref{prop:est_inhom_Q_f_g_h_gaussian}:
\begin{equation*}
	\dlla Q(g_N, g_{N+1}), g_{N+1} \drra_{\spg_q} \lesssim \| g_{N} \|_{\spg_q} \| g_{N+1} \|_{\disg_q}^2 +  \| g_{N+1} \|_{\disg_q} \| g_{N+1} \|_{\spg_q} \| g_N \|_{\disg_q}.
\end{equation*}
The coupling term is estimated using \eqref{eq:est_regularized_twisted_naive}:
\begin{equation*}
	\dlla \AA h_N, g_{N+1} \drra_{\spg_q} \lesssim \| h_N \|_{\spp(m_0)} \| g_{N+1} \|_{\spg_q}.
\end{equation*}
Putting these bounds together et recalling that $\Nt \cdot \Nt_{\spg_q}$ and $\| \cdot \|_{\spg_q}$ are equivalent, we obtain the following energy estimate for $g_{N+1}$:
\begin{align*}
	\frac{1}{2} \frac{\mathrm d}{\mathrm d t} \Nt g_{N+1} \Nt_{\spg_q}^2 + \lambda \| g_{N+1} \|_{\disg_q}^2 \lesssim & \Nt g_{N} \Nt_{\spg_q} \| g_{N+1} \|_{\disg_q}^2 + \| g_{N+1} \|_{\disg_q} \Nt g_{N+1} \Nt_{\spg_q} \| g_{N} \|_{\disg_q} \\
	& +  \| h_N \|_{\spp(m_0)} \Nt g_{N+1} \Nt_{\spg_q},
\end{align*}
which, using \eqref{eq:simple_inductive_bound} and assuming $\eps_0>0$ small enough in order to absorb the first term in the right-hand side by the left-hand side, simplifies into
\begin{equation}\label{eq:g_E}
	\frac{1}{2} \frac{\mathrm d}{\mathrm d t} \Nt g_{N+1} \Nt_{\spg_q}^2 + \frac{\lambda}{2} \| g_{N+1} \|_{\disg_q}^2 \lesssim \| g_{N+1} \|_{\disg_q} \Nt g_{N+1} \Nt_{\spg_q} \| g_{N} \|_{\disg_q}
	 +  \| h_N \|_{\spp(m_0)} \Nt g_{N+1} \Nt_{\spg_q}.
\end{equation}
Integrating in time and using \eqref{eq:simple_inductive_bound} and simplifying by $\| g_{N+1} \|_{\spgt_q}$, we finally get
\begin{equation*}
\begin{aligned}
\| g_{N+1} \|_{\spgt_q}^2 
&\lesssim \| g_{N} \|_{\spgt_q} \| g_{N+1} \|_{\spgt_q}^2 + \| h_N \|_{\sppdt(m_0)} \| g_{N+1} \|_{\spgt_q} \\
&\lesssim \eps_0 \| g_{N+1} \|_{\spgt_q}^2 + \| f_0 \|_{\spp(m)} \| g_{N+1} \|_{\spgt_q},
\end{aligned}
\end{equation*}
which implies, assuming $\eps_0>0$ small enough, that
\begin{equation}\label{eq:integrated_stability_gaussian}
\| g_{N+1} \|_{\spgt_q}
\lesssim  \| f_0 \|_{\spp(m)} \le \eps_0.
\end{equation}
This concludes this step.

\medskip

We therefore deduce \eqref{eq:simple_inductive_bound} for $N+1$ by gathering estimates \eqref{eq:integrated_stability_poly}, \eqref{eq:integrated_stability_poly_decay} and \eqref{eq:integrated_stability_gaussian}, which completes the stability part of the proof.

\subsection{Convergence of the scheme}\label{sec:scheme_convergence}

Consider the successive differences of $(h_N)_{N=0}^\infty$ denoted by $d_{N+1} := h_{N+1} - h_N$, that of $(g_N)_{N=0}^\infty$ by $e_{N+1} := g_{N+1} - g_N$, and consider the equation satisfied by $d_{N+1}$
\begin{equation}\label{eq:dN}
\left\{
\begin{aligned}
	\partial_t d_{N+1} = (\BB-v\cdot \nabla_x) d_{N+1} & + Q(h_N, d_{N+1}) + Q(d_N, h_N) \\
	& + Q(g_N, d_{N+1}) + Q(e_N, h_N) \\
	& + Q(d_{N+1}, g_N) + Q(h_N, e_N),\\
	d_{N+1}(0, x, v) = 0,
\end{aligned}
\right.
\end{equation}
as well as the one satisfied by $e_{N+1}$:
\begin{equation}\label{eq:eN}
\left\{
\begin{aligned}	&\partial_t e_{N+1} = (\LLL-v\cdot \nabla_x) e_{N+1} + Q(g_N, e_{N+1}) + Q(e_N, g_N) + \AA d_{N},\\
	&e_{N+1}(0, x, v) = 0.
\end{aligned}
\right.
\end{equation}

In this subsection, we shall establish that for $\eps_0 >0$ small enough the following bound holds, for some $C_0>0$ and all $N \ge 0$:
\begin{equation}\label{eq:bound_convergence_N}
\|e_N\|_{\spgt_q} + \| d_N \|_{\sppt(\underline{m})} + \| d_N \|_{\sppdt(\underline{m_0})} \lesssim (C_0 \eps_0)^{N/2}.
\end{equation}

\subsubsection{Recursive estimates for $d_{N+1}$ in norm $\sppt(\underline{m})$}
Let $N \in \N$. We start by considering the equation \eqref{eq:dN}. The estimates of Proposition \ref{prop:est_inhom_Q_g_f_f} give the following control:
\begin{align*}
	\la Q(h_N, d_{N+1}), d_{N+1} \ra_{\spp(\underline{m} )} \lesssim & \| d_{N+1} \|_{\dispa(\underline{m})}^2 \| h_N \|_{\spp(\underline{m})} \\
	 & + \| h_N \|_{\dispa(\underline{m})} \| d_{N+1} \|_{\spp(\underline{m})} \| d_{N+1} \|_{\dispa(\underline{m})},
\end{align*}
and also
\begin{align*}
		\la Q(g_N, d_{N+1}), d_{N+1} \ra_{\spp(\underline{m})} \lesssim & \| d_{N+1} \|_{\dispa(\underline{m})}^2 \| g_N \|_{\spg_q} \\
	& + \| g_N \|_{\disg_q} \| d_{N+1} \|_{\spp(\underline{m})} \| d_{N+1} \|_{\dispa(\underline{m})}.
\end{align*}
Moreover, estimate \eqref{eq:est_inhom_Q_f_g_h_poly_gauss} gives
\begin{align*}
	\la Q(d_{N+1}, g_N), d_{N+1} \ra_{\spp(\underline{m})} \lesssim & \| d_{N+1} \|_{ \dispa(\underline{m}) }^2 \| g_N \|_{ \spg_q} \\
	& + \| d_{N+1} \|_{ \dispa(\underline{m}) }\| g_N \|_{ \disg_q } \| d_{N+1} \|_{\spp(\underline{m})},
\end{align*}
as well as
\begin{align*}
	\la Q(h_N, e_N), d_{N+1} \ra_{\spp(\underline{m})} \lesssim  & \| d_{N+1} \|_{ \dispa(\underline{m}) } \| h_N \|_{ \dispa(\underline{m}) } \| e_N \|_{ \spg_q} \\
& + \| d_{N+1} \|_{ \dispa(\underline{m}) } \| e_N \|_{ \disg_q } \| h_N \|_{\spp(\underline{m})} \\
& + \| d_{N+1} \|_{ \spp(\underline{m}) } \| e_N \|_{ \disg_q } \| h_N \|_{\dispa(\underline{m})}.
\end{align*}
Finally, estimates \eqref{eq:est_inhom_Q_g_f_h_poly_gauss} and \eqref{eq:est_inhom_Q_f_g_h_poly} give respectively the following bounds, which force to work in the larger space $\spp(\underline{m})$ instead of $\spp(m)$:
\begin{align*}
	\la Q(e_N, h_N), d_{N+1} \ra_{\spp(\underline{m})} \lesssim & \| d_{N+1} \|_{ \dispa(\underline{m}) }  \| e_N \|_{ \disg_q } \| h_N \|_{ \spp(\underline{m})} \\
	& +  \| d_{N+1} \|_{ \dispa(\underline{m}) } \| h_N \|_{ \dispa(m) } \| e_N \|_{\spg_q},
\end{align*}
and
\begin{align*}
	\la Q(d_N, h_N), d_{N+1} \ra_{\spp(\underline{m})} \lesssim & \| d_{N+1} \|_{\spp(\underline{m})} \| d_N \|_{ \dispa(\underline{m}) } \| h_N \|_{ \dispa(\underline{m})} \\
	& + \| d_{N+1} \|_{\dispa(\underline{m})} \| h_N \|_{ \dispa(m) } \| d_N \|_{\spp(\underline{m})}.
\end{align*}
As in the step of stability in Section~\ref{sec:stability_hN_X}, we put these bounds together and integrate the resulting energy estimate to obtain the following control:
$$
\begin{aligned}
\| d_{N+1} \|_{\sppt(\underline{m})}^2
&\lesssim \eps_0 \| d_{N+1} \|_{\sppt(\underline{m})}^2 
+ \eps_0 \| d_{N+1} \|_{\sppt(\underline{m})} \| e_N \|_{\spgt_q}
+ \eps_0 \| d_{N+1} \|_{\sppt(\underline{m})} \| d_N \|_{\sppt(\underline{m})}
\end{aligned}
$$
where we used the stability estimates \eqref{eq:simple_inductive_bound}. Assuming $\eps_0>0$ small enough, this simplifies as
\begin{equation}
	\label{eq:integrated_convergence_poly}
	\| d_{N+1} \|_{\sppt(\underline{m})} \lesssim \eps_0 \| e_N \|_{\spgt_q} + \eps_0 \| d_N \|_{\sppt(\underline{m})}.
\end{equation}

\subsubsection{Recursive estimate for $d_{N+1}$ in norm $\sppdt(\underline{m_0})$ }
Let $N \in \N$. Arguing as in the step of stability in Section~\ref{sec:stability_hN_Y}, we have
\begin{align*}
& \frac{\mathrm d}{\mathrm d t} \Big \{  (1+t)^{2\theta} \| d_{N+1} \|^2_{\spp(\underline{m_0})} \Big\}  + \frac{\lambda}{2} (1+t)^{2\theta} \| d_{N+1} \|_{\dispa(\underline{m_0})}^2 \\
&\qquad \lesssim  
(1+t)^{2\theta} \| d_{N+1} \|_{\spp(\underline{m_0})} \| d_{N+1} \|_{\dispa(\underline{m_0})} \left( \| h_N \|_{\dispa(m_0)}  + \| g_N \|_{\disg_q} \right) \\
&\qquad\quad 
+ (1+t)^{2\theta} \| d_{N+1} \|_{ \dispa(\underline{m_0}) } \| h_N \|_{ \dispa(m_0) } \left( \| e_N \|_{\spg_q} + \| d_N \|_{\spp(\underline{m_0})} \right) \\
&\qquad\quad 
+ (1+t)^{2\theta} \| d_{N+1} \|_{ \dispa(\underline{m_0}) } \| h_N \|_{\spp(\underline{m_0})} \left( \| e_N \|_{ \disg_q } + \| d_N \|_{ \dispa(\underline{m_0}) } \right) \\
&\qquad\quad 
+ (1+t)^{2\theta} \| d_{N+1} \|_{ \spp(\underline{m_0}) } \| h_N \|_{\dispa(\underline{m_0})}  \| e_N \|_{ \disg_q }  \\
&\qquad\quad + (1+t)^{2\theta - 1 - \frac{2(k - k_0)}{|\gamma|} } \| d_{N+1} \|_{\spp(\underline{m})}^2.
\end{align*}
After integrating and using the bounds \eqref{eq:simple_inductive_bound} from the stability estimate, we are left with
\begin{align*}
\| d_{N+1} \|_{\sppdt(\underline{m_0})}^2 
&\lesssim  \eps_0 \| d_{N+1} \|_{\sppdt( \underline{m_0} )}^2 
+ \eps_0 \| d_{N+1} \|_{\sppdt(\underline{m_0})} \left( \| e_N \|_{\spgt_q} + \| d_N \|_{\sppdt( \underline{m_0} )} \right) 
+ \| d_{N+1} \|^2_{\sppt( \underline{m} )}  \\
& \lesssim \eps_0 \| d_{N+1} \|_{\sppdt( \underline{m_0} )}^2 + \eps_0 \left( \| e_N \|_{\spgt_q}^2 + \| d_N \|_{\sppdt( \underline{m_0} )}^2 \right) + \| d_{N+1} \|^2_{\sppt( \underline{m} )},
\end{align*}
where we used Young's inequality in the last line. Assuming $\eps_0>0$ small enough and plugging~\eqref{eq:integrated_convergence_poly} in, this bound simplifies as
\begin{equation}
	\label{eq:integrated_convergence_poly_decay}
	\| d_{N+1} \|_{\sppdt(\underline{m_0})} \lesssim \eps_0 \| e_N \|_{\spgt_q} + \eps_0 \| d_N \|_{\sppt( \underline{m_0} )} + \eps_0 \| d_N \|_{\sppdt(\underline{m_0})}.
\end{equation}

\subsubsection{Recursive estimates for $e_{N}$ in norm $\spgt_q$}
Let $N \in \N$. We now consider the equation \eqref{eq:eN}. Using \eqref{eq:est_Q_f_g_h_gauss} we have
\begin{equation*}
	\dlla Q(g_N, e_{N+1}), e_{N+1} \drra_{\spg_q} 
\lesssim 
\| g_{N} \|_{\spg_q} \| e_{N+1} \|_{\disg_q}^2 +  \| e_{N+1} \|_{\disg_q} \| e_{N+1} \|_{\spg_q} \| g_N \|_{\disg_q},
\end{equation*}
as well as
\begin{equation*}
	\dlla Q(e_N, g_{N}), e_{N+1} \drra_{\spg_q} 
	\lesssim \| e_{N+1} \|_{\disg_q} \left( \| e_N \|_{\disg_q}  \| g_N \|_{\spg_q} + \| e_N \|_{\spg_q}  \| g_N \|_{\disg_q}  \right),
\end{equation*}
and using~\eqref{eq:est_regularized_twisted_naive} we get
\begin{equation*}
	\dlla \AA d_N, e_{N+1} \drra_{\spg_q} \lesssim \| d_N \|_{\spp(\underline{m_0})} \| e_{N+1} \|_{\spg_q}.
\end{equation*}

Arguing as in Section~\ref{sec:stability_gN_E}, we gather these estimates and integrate the resulting energy estimate in time to obtain the following bound:
\begin{equation*}
	\| e_{N+1} \|_{\spgt_q} \lesssim \| g_{N} \|_{\spgt_q} \| e_{N+1} \|_{\spgt_q} + \| g_{N} \|_{\spgt_q} \| e_N \|_{\spgt_q} + \| d_{N} \|_{\sppdt(\underline{m_0})},
\end{equation*}
which, using the stability estimates \eqref{eq:simple_inductive_bound} and assuming $\eps_0>0$ small enough, simplifies as
\begin{equation}
	\label{eq:integrated_convergence_gauss}
	\| e_{N+1} \|_{\spgt_q} \lesssim \eps_0 \| e_N \|_{\spgt_q} + \| d_{N} \|_{\sppdt(\underline{m_0})}.
\end{equation}

\subsubsection{Proof of convergence}

We first prove \eqref{eq:bound_convergence_N} by using previous estimates. It is clearly true for $N=0$, so we assume that~\eqref{eq:bound_convergence_N} holds for all integers up to some $N \ge 0$, and we shall deduce it for $N+1$.
Thanks to estimates \eqref{eq:integrated_convergence_poly}, \eqref{eq:integrated_convergence_poly_decay} and \eqref{eq:integrated_convergence_gauss} we have obtained, for all $N \ge 0$,
\begin{equation*}
	\begin{aligned}
		\| d_{N+1} \|_{\sppt(\underline{m})} &\lesssim \eps_0 \left( \| d_{N} \|_{\sppt(\underline{m})}  +  \| e_{N} \|_{\spgt_q} \right) \\
		\| d_{N+1} \|_{\sppdt(\underline{m_0})} &\lesssim \eps_0 \left(  \| d_N \|_{\sppt( \underline{m} )} +  \| d_N \|_{\sppdt(\underline{m_0})} +  \| e_N \|_{\spgt_q} \right) \\
		\| e_{N+1} \|_{\spgt_q} &\lesssim \eps_0 \| e_{N} \|_{\spgt_q} + \| d_{N} \|_{\sppdt(\underline{m_0})}  .
	\end{aligned}
\end{equation*}
This implies that 
$$
\| d_{N+1} \|_{\sppt(\underline{m})} + \| d_{N+1} \|_{\sppdt(\underline{m_0})} + \| e_{N+1} \|_{\spgt_q} \lesssim \eps_0 ( \| d_{N-1} \|_{\sppt(\underline{m})} + \| d_{N-1} \|_{\sppdt(\underline{m_0})} + \| e_{N-1} \|_{\spgt_q} )
$$
and thus, using \eqref{eq:bound_convergence_N} for $N-1$, we deduce 
$$
\| d_{N+1} \|_{\sppt(\underline{m})} + \| d_{N+1} \|_{\sppdt(\underline{m_0})} + \| e_{N+1} \|_{\spgt_q} 
\lesssim \eps_0 ( C_0 \eps_0)^{\frac{N-1}{2}} 
\lesssim (C_0 \eps_0)^{\frac{N+1}{2}} ,
$$
which proves \eqref{eq:bound_convergence_N}.

Therefore, assuming $\eps_0>0$ small enough, the sequence $(h_N, g_N)_{N \ge 0}$ is a Cauchy sequence in $\sppt(\underline{m}) \times \spgt_q$ and thus converges to some limit $(h,g)$ in $\sppt(\underline{m}) \times \spgt_q$. In virtue of the stability estimates, the limit thus satisfies the bounds
\begin{equation}\label{eq:bound_h_g}
\begin{aligned}
\| h \|_{\sppt(m)}=	\sup_{t \ge 0} \| h(t) \|_{\spp(m)}^2 + \int_0^\infty \| h(t) \|_{\dispa(m)}^2 \d t &\lesssim \| f_0 \|_{\spp(m)}^2,\\
\| h \|_{\sppdt(m_0)}=	\sup_{t \ge 0} \, (1+ t)^{2\theta} \| h(t) \|_{\spp\left( m_0 \right)}^2 + \int_0^\infty (1+t)^{2\theta} \| h(t) \|_{\dispa\left( m_0 \right)}^2 \d t &\lesssim \| f_0 \|_{\spp(m)}^2,\\
\| g \|_{\spgt_q}=	\sup_{t \ge 0} \| g(t) \|_{\spg_q}^2 + \int_0^\infty \| g(t) \|_{\disg_q}^2 \d t &\lesssim \| f_0 \|_{\spp(m)}^2.
\end{aligned}
\end{equation}
The solution thus constructed to the original perturbation equation \eqref{eq:main-pert} is given by letting $f := h + g \in L^\infty( \R_+ ; \spp(m)) \cap L^2(\R_+ ; \disp(m))$, which thus satisfies
\begin{gather*}
	\sup_{t \ge 0} \| f(t) \|_{\spp(m)}^2 + \int_0^\infty \left( \| f^\perp(t) \|_{\dispa(m)}^2 + \| \nabla_x \pi f(t) \|_{H^2_x L^2_v }^2 \right) \d t \lesssim \| f_0 \|_{\spp(m)}^2.
\end{gather*}
This completes the existence part of Theorem~\ref{theo:non-cutoff} together with the energy estimate~\eqref{eq:theo:energy_estimate}.

\subsection{Uniqueness of the solution}\label{sec:uniqueness}
Consider two solutions $f , \widetilde{f} \in L^\infty( \R_+ ; \spp(m)) \cap L^2(\R_+ ; \disp(m))$ to \eqref{eq:main-pert} with the same initial condition $f_0$ and verifying
\begin{align*}
\sup_{t \ge 0} \| f(t) \|_{\spp(m)}^2 + \int_0^\infty  \| f(t) \|_{\disp(m)}^2 \d t 
&\lesssim \| f_0 \|_{\spp(m)}^2 \le \eps_0^2 , \\
\sup_{t \ge 0} \| \widetilde{f}(t) \|_{\spp(m)}^2 + \int_0^\infty  \| \widetilde{f}(t) \|_{\disp(m)}^2  \d t &\lesssim \| f_0 \|_{\spp(m)}^2 \le \eps_0^2,
\end{align*}
with $\eps_0>0$ small enough. Denote $d := f - \widetilde{f}$ the difference of these solutions, which satisfies
\begin{equation*}
	\partial_t d = (\BB-v\cdot \nabla_x) d + \AA d + Q(d, f) + Q(\widetilde{f}, d).
\end{equation*}
Arguing as in the previous steps, one gets for some $\lambda > 0$ the control
\begin{align*}
	\frac{1}{2} \frac{\mathrm d}{\mathrm d t} \| d \|^2_{\spp(\underline{m})} + \lambda \| d \|_{\dispa(\underline{m})}^2 \lesssim & \| d \|^2_{\spp(\underline{m})} + \| d \|^2_{\dispa(\underline{m})} \| f \|_{\spp(\underline{m})} + \| d \|_{\dispa(\underline{m})} \| d \|_{\spp(\underline{m})} \| f \|_{\disp(m)}\\
	& + \| d \|_{\dispa(\underline{m})}^2 \| \widetilde{f} \|_{\spp(\underline{m})} + \| d \|_{\dispa(\underline{m})} \| d \|_{\spp(\underline{m})} \| \widetilde{f} \|_{\dispa(\underline{m})},
\end{align*}
which, once integrated from $t=0$ to $t=T < \infty$, gives (with obvious notation)
\begin{align*}
	\| d \|_{\sppt(\underline{m} ; T)}^2 \lesssim \, \Big( T & + \| f \|_{L^\infty\left( [0, T] ; \spp(m) \right)} + \| f \|_{L^2\left( [0, T] ; \dispa(m) \right)} \\
	& + \| \widetilde{f} \|_{L^\infty\left( [0, T] ; \spp(m) \right)} + \| \widetilde{f} \|_{L^2\left( [0, T] ; \dispa(m) \right)} \Big) \| d \|_{\sppt(\underline{m};T)}^2.
\end{align*}
Observing that 
$$
\| \phi \|_{\dispa(m)} 
\lesssim \| \pi \phi \|_{H^3_x L^2_v} + \| \phi^\perp \|_{\dispa(m)} 
\lesssim \| \pi \phi \|_{L^2_x L^2_v} + \| \phi \|_{\disp(m)} 
\lesssim \| \phi \|_{\spp(m)} + \| \phi \|_{\disp(m)} 
$$
we have 
$$
\| f \|_{L^2( [0, T] ; \dispa(m) )} 
\lesssim \sqrt{T} \| f \|_{L^\infty( [0, T] ; \spp(m) )}  + \| f \|_{L^2( [0, T] ; \disp(m) )}
\lesssim \sqrt{T} \eps_0 + \eps_0
$$
and similarly for $\widetilde f$. Using the uniform bounds on $f$ and $\widetilde{f}$, this becomes
\begin{align*}
	\| d \|_{\sppt(\underline{m} ; T)}^2 \lesssim \, \Big( T + \eps_0 + \sqrt{T} \eps_0 \Big) \| d \|_{\sppt(\underline{m};T)}^2.
\end{align*}
Assuming $T > 0$ small enough and $\eps_0>0$ small enough, we have (for instance)
\begin{align*}
	\| d \|_{\sppt(\underline{m} ; T)}^2 \le \frac{1}{2} \| d \|_{\sppt(\underline{m};T)}^2,
\end{align*}
which means that $d=0$, or equivalently $f=\widetilde{f}$, on interval $[0, T]$. By continuing this argument, we deduce that $f_0$ gives rise to a unique (global) solution, namely, $f$. This concludes the proof of uniquenees in Theorem~\ref{theo:non-cutoff}.

\subsection{Decay estimate}\label{sec:decay}

This subsection is devoted to the proof of the decay estimate~\eqref{eq:theo:decay_estimate} in Theorem~\ref{theo:non-cutoff}. 
We therefore consider the unique solution $f=h+g$ of~\eqref{eq:main-pert} constructed in Sections~\ref{sec:scheme_stability}, \ref{sec:scheme_convergence} and \ref{sec:uniqueness} above, where $(h,g) \in \sppt(m) \times \spgt_q$ is a solution to the system~\eqref{eq:systeme-hg} and satisfies the estimates~\eqref{eq:bound_h_g}, where we fix $q> \vartheta |\gamma|$ and recall that $0 < \vartheta < \frac{3}{2} (\frac{1}{2} - \frac{1}{p})$ with $p \in (2,\infty]$. We shall obtain in the sequel the a priori estimates that implies the decay estimate \eqref{eq:theo:decay_estimate}, recalling that we suppose
$$
\| f_0 \|_{\spp(m)} + \| \widehat f_0 \|_{L^p_\xi L^2_v (\la v \ra^{-8s} m)} \le \eps_0
$$
small enough.

We observe that we have already obtained a decay estimate for the polynomial part of the solution $h$ in \eqref{eq:bound_h_g}. 
It therefore remains to obtain a decay estimate for $g$ in the space $\spgt_0$, which will be done by obtaining estimates in the Sobolev-type space $\spgt_{0,\star}$ defined by 
\begin{align}\label{eq:E_star}
	\| g \|_{\spgt_{0,\star}}^2 &:= \sup_{t \ge 0} (1+t)^{2\vartheta} \Nt g(t) \Nt_{\spg_0}^2 + \int_0^\infty (1+s)^{2\vartheta} \| g(t) \|_{\disg_0}^2 \d t.
\end{align}
This will require us to obtain uniform estimates in time of $h$ and $g$ in some Fourier-based spaces (defined below in \eqref{eq:Fp}, \eqref{eq:Fpstar} and \eqref{eq:Gp}).

Taking the Fourier transform in space of the system~\eqref{eq:systeme-hg} gives, for all $\xi \in \R^3$,
\begin{equation}\label{eq:systeme-hgN-fourier}
	\begin{cases}
		\begin{aligned}
			\partial_t \widehat h(\xi) = (\BB- i v\cdot \xi )  \widehat h(\xi)  + \widehat Q( h,  h)(\xi) 
			 + \widehat Q(  g,  h)(\xi) + \widehat Q( h,  g) (\xi),
		\end{aligned}
		\\
		\partial_t \widehat g(\xi) = (\LLL -i v\cdot \xi) \widehat g(\xi) + \widehat Q(g,g) (\xi) + \AA \widehat h (\xi),\\
		\widehat h(0, \xi,v) = \widehat f_0(\xi,v), \quad \widehat g(0, \xi, v) = 0,
	\end{cases}
\end{equation}
where we recall
$$
\widehat{Q}(f, g)(\xi) = \int_{\R^3} Q\left( \widehat{f}(\xi - \eta) , \widehat{g}(\eta) \right) \d \eta.
$$
Recalling that the weight functions $m = \la v \ra^k$ and $m_0= \la v \ra^{k_0}$ are defined in \eqref{eq:k} and \eqref{eq:k0_theta}, respectively, we define the weights $\tilde{m} = \la v \ra^{-8s} m $ and $\tilde{m}_0 = \la v \ra^{-8s} m_0$. Observe that we can apply Propositions~\ref{prop:w_diss_B} and \ref{prop:nonlinear_Nm_fgh} with the weight functions $\tilde m_0$ and $\tilde m$. We also remark that the $\la v \ra^{-8s}$ appearing in the definition of $\tilde m$ comes from our definition of the space $\spp(m)$ and the weight loss in the nonlinear estimate of Proposition~\ref{prop:nonlinear_Nm_fgh}, which allows us to control some Fourier-based norm with the energy estimate (see \eqref{eq:h_L1xi_Xm}).

We then define the Fourier-based functional spaces with polynomial weights $\mathscr{F}^p( \tilde{m})$ and $\mathscr{F}^p_{\star}( \tilde{m}_0)$ as the spaces associated to the norms, respectively:
\begin{align}
	\| h \|_{\mathscr{F}^p( \tilde{m})} &:= \| \widehat h \|_{L^p_\xi L^\infty_t L^2_v(\tilde{m})} + \| \widehat h \|_{L^p_\xi L^2_t H^{s,*}_v(\tilde{m})} , \label{eq:Fp}\\
	\| h \|_{\mathscr{F}^p_{\star}( \tilde{m}_0)} &:= \| (1+t)^\theta \widehat h \|_{L^p_\xi L^\infty_t L^2_v(\tilde{m}_0)} + \| (1+t)^\theta \widehat h \|_{L^p_\xi L^2_t H^{s,*}_v(\tilde{m}_0)}, \label{eq:Fpstar}
\end{align}
where the decay parameter $\theta >0$ is defined in \eqref{eq:k0_theta}. Moreover we also define the exponentially weighted space $\mathscr{G}^p$ as the space associated to the norm:
\begin{align}\label{eq:Gp}
\| g \|_{\mathscr{G}^p} &:= \| \widehat g \|_{L^p_\xi L^\infty_t L^2_v(\mu^{-1/2})} + \| \widehat g \|_{L^p_\xi L^2_t H^{s,**}_v ( \mu^{-1/2})}. 
\end{align}

For later use, we already observe that
\begin{equation}\label{eq:h_L1xi_Xm}
\| \widehat h \|_{L^1_\xi L^2_t H^{s,*}_v( \la v \ra^{2s} \tilde{m})} \lesssim \| \la \xi \ra^3 \widehat h \|_{L^2_\xi L^2_t H^{s,*}_v( \la v \ra^{2s} \tilde{m})}  \approx \| h \|_{L^2_t H^3_x H^{s,*}_v( \la v \ra^{2s} \tilde{m})} \lesssim \| h \|_{\sppt(m)},
\end{equation}
and similarly
\begin{equation}\label{eq:g_L1xi_E}
\| \widehat g \|_{L^1_\xi L^2_t H^{s,**}_v(\mu^{-1/2})} 
\lesssim  \| \la \xi \ra^3 \widehat g \|_{L^2_\xi L^2_t H^{s,**}_v(\mu^{-1/2})}  
\approx \|  g \|_{L^2_t H^3_x H^{s,**}_v(\mu^{-1/2})} \lesssim \| g \|_{\spgt_q},
\end{equation}
with $\| h \|_{\sppt(m)}$ and $\| g \|_{\spgt_q}$ being controlled thanks to \eqref{eq:bound_h_g}.

\medskip

We split the proof of~\eqref{eq:theo:decay_estimate} into four steps.

\medskip\noindent
\textit{Step 1: Estimate of $h$ in norm $\mathscr{F}^p(\tilde{m})$.}
We start with the first equation of \eqref{eq:systeme-hgN}. The weak coercivity estimate on $\BB-v\cdot \nabla_x$ from Proposition~\ref{prop:w_diss_B} gives for some $\lambda > 0$
\begin{gather*}
\Re	\lla (\BB-iv\cdot \xi) \widehat h (\xi), \widehat h (\xi) \rra_{L^2_v(\tilde{m})} \le -\lambda \| \widehat h(\xi) \|_{H^{s,*}_v(\tilde{m})}^2,
\end{gather*}
thus
\begin{align*}
&\frac{1}{2} \frac{\mathrm d}{\mathrm d t} \|\widehat h (\xi) \|^2_{L^2_v(\tilde{m})} +\lambda \| \widehat h(\xi) \|_{H^{s,*}_v(\tilde{m})}^2 \\
&\qquad\lesssim  \lla \widehat Q ( h , h) (\xi), \widehat h (\xi) \rra_{L^2_v(\tilde{m})} 
+ \lla \widehat Q ( g , h) (\xi), \widehat h (\xi) \rra_{L^2_v(\tilde{m})} 
+ \lla \widehat Q ( h , g) (\xi), \widehat h (\xi) \rra_{L^2_v(\tilde{m})}.\end{align*}
Integrating in time then taking the supremum in time in both sides gives
\begin{align*}
&\|\widehat h (\xi) \|_{L^\infty_t L^2_v(\tilde{m})}^2 + \| \widehat h(\xi) \|_{L^2_t H^{s,*}_v(\tilde{m})}^2 \\
&\qquad\lesssim  
\NN_{\tilde{m}} [h , h , h] (\xi)^2 
+ \NN_{\tilde{m}} [g , h , h] (\xi)^2 
+ \NN_{\tilde{m}} [h , g , h] (\xi)^2 
+ \|\widehat f_0 (\xi) \|_{L^2_v(\tilde{m})}^2
\end{align*}
and hence
\begin{equation}\label{eq:hN+1xi_X1m}
\begin{aligned}
&\|\widehat h (\xi) \|_{L^\infty_t L^2_v(\tilde{m})} + \| \widehat h(\xi) \|_{L^2_t H^{s,*}_v(\tilde{m})} \\
&\qquad\lesssim  \NN_{\tilde{m}} [h , h , h] (\xi) + \NN_{\tilde{m}} [g , h , h] (\xi) 
+ \NN_{\tilde{m}} [h , g , h] (\xi) + \|\widehat f_0 (\xi) \|_{L^2_v(\tilde{m})}.
\end{aligned}
\end{equation}

Observe that thanks to Proposition~\ref{prop:nonlinear_Nm_fgh} we have
$$
\begin{aligned}
\| \NN_{\tilde{m}} [h , h , h] \|_{L^p_\xi} 
\lesssim \| \la v \ra^{2s} \widehat h \|_{L^1_\xi L^2_t H^{s,*}_v(\tilde{m})}^{1/2} \| h \|_{\mathscr{F}^p(\tilde{m})}, 
\end{aligned}
$$
and
$$
\begin{aligned}
&\| \NN_{\tilde{m}} [g , h , h] \|_{L^p_\xi} + \| \NN_{\tilde{m}} [h , g , h] \|_{L^p_\xi} \\
&\quad
\lesssim  \| \widehat g \|_{L^1_\xi L^2_t H^{s,**}_v(\mu^{-1/2})}^{1/2}   \| h \|_{\mathscr{F}^p(\tilde{m})}
+ \|  g \|_{\mathscr{G}^p}^{1/2}  \| \la v \ra^{2s} \widehat h \|_{L^1_\xi L^2_t H^{s,*}_v(\tilde{m})}^{1/2} \| h \|_{\mathscr{F}^p(\tilde{m})}^{1/2}.
\end{aligned}
$$
Thanks to \eqref{eq:h_L1xi_Xm}--\eqref{eq:g_L1xi_E} and the bounds on $h$ and $g$ from estimate \eqref{eq:bound_h_g}, we deduce that 
$$
\begin{aligned}
&\| \NN_{\tilde{m}} [h , h , h] \|_{L^p_\xi} + \| \NN_{\tilde{m}} [g , h , h] \|_{L^p_\xi}+\| \NN_{\tilde{m}} [h , g , h] \|_{L^p_\xi} \\
&\quad\lesssim
\| f_0 \|_{\spp(m)}^{1/2} \| h \|_{\mathscr{F}^p(\tilde{m})}
+ \| f_0 \|_{\spp(m)}^{1/2} \|  g \|_{\mathscr{G}^p}^{1/2} \| h \|_{\mathscr{F}^p(\tilde{m})}^{1/2},
\end{aligned}
$$
therefore taking the $L^p_\xi$ norm of \eqref{eq:hN+1xi_X1m} yields
\begin{align*}
\| h \|_{\mathscr{F}^p(\tilde{m})} 
&\lesssim   
\| f_0 \|_{\spp(m)}^{1/2} \| h \|_{\mathscr{F}^p(\tilde{m})}
+ \| f_0 \|_{\spp(m)}^{1/2} \|  g \|_{\mathscr{G}^p}^{1/2} \| h \|_{\mathscr{F}^p(\tilde{m})}^{1/2}
+ \| \widehat f_0 \|_{L^p_\xi L^2_v(\tilde{m})}.
\end{align*}
Recalling that $\| f_0 \|_{\spp(m)} + \| \widehat f_0 \|_{L^p_\xi L^2_v (\la v \ra^{-8s} m)} \le \eps_0$ and taking $\eps_0>0$ small enough we deduce
\begin{equation}\label{eq:h_Fp}
\| h \|_{\mathscr{F}^p(\tilde{m})} 
\lesssim   
\| f_0 \|_{\spp(m)} \|  g \|_{\mathscr{G}^p} + \| \widehat f_0 \|_{L^p_\xi L^2_v(\tilde{m})}.
\end{equation}

\medskip\noindent
\textit{Step 2: Estimate of $h$ in norm $\mathscr{F}^p_\star(\tilde{m}_0)$.}\
As for the estimates with the weight $\tilde m$ in Step 1, thanks to Proposition~\ref{prop:w_diss_B} we have for some $\lambda > 0$
\begin{align*}
&\frac{1}{2} \frac{\mathrm d}{\mathrm d t} \|\widehat h (\xi) \|^2_{L^2_v(\tilde m_0)} +\lambda \| \widehat h(\xi) \|_{H^{s,*}_v(\tilde m_0)}^2 \\
&\qquad\lesssim  \lla \widehat Q ( h , h) (\xi), \widehat h (\xi) \rra_{L^2_v(\tilde m_0)} 
+ \lla \widehat Q ( g , h) (\xi), \widehat h (\xi) \rra_{L^2_v(\tilde m_0)} \\
&\qquad\quad
+ \lla \widehat Q ( h , g) (\xi), \widehat h (\xi) \rra_{L^2_v(\tilde m_0)}.
\end{align*}
Moreover, since we have $\|\la v \ra^{\gamma / 2} h\|_{L^2_v(\tilde m_0)} \le \| h \|_{H^{s,*}_v(\tilde m_0)}$, using a similar interpolation inequality as in \eqref{eq:interpolation_Rt} we have for all $ t \ge 0$
\begin{equation}\label{eq:interpolation_Rt_bis}
	 \frac{2\theta}{\lambda} (1+t)^{-1} \| h \|_{L^2_v(\tilde m_0)}^2 \le \| h \|_{H^{s,*}_v(\tilde m_0)}^2 + \left(\frac{2\theta}{\lambda}\right)^{1 + \frac{2(k - k_0)}{|\gamma|} } (1+t)^{ -1 - \frac{2(k - k_0)}{|\gamma|} } \| h \|_{L^2_v(\tilde m)}^2.
\end{equation}
We now plug this control in the previous energy estimate:
\begin{align*}
&\frac{1}{2} \frac{\mathrm d}{\mathrm d t} \|\widehat h (\xi) \|^2_{L^2_v(\tilde m_0)} +\frac{\lambda}{2} \| \widehat h(\xi) \|_{H^{s,*}_v(\tilde m_0)}^2 + \theta(1+t)^{-1} \| \widehat h(\xi) \|_{L^2_v(\tilde m_0)}^2\\
&\qquad\lesssim  \lla \widehat Q ( h , h) (\xi), \widehat h (\xi) \rra_{L^2_v(\tilde m_0)} 
+ \lla \widehat Q ( g , h) (\xi), \widehat h (\xi) \rra_{L^2_v(m_0)} \\
&\qquad\quad
+ \lla \widehat Q ( h , g) (\xi), \widehat h (\xi) \rra_{L^2_v(\tilde m_0)}
+(1+t)^{- 1 - \frac{2(k - k_0)}{|\gamma|} } \| h \|_{L^2_v(\tilde m)}^2 ,
\end{align*}
and then multiply both sides by $(1+t)^{2\theta}$:
\begin{align*}
&\frac{1}{2} \frac{\mathrm d}{\mathrm d t} \left\{ (1+t)^{2\theta} \|\widehat h (\xi) \|^2_{L^2_v(\tilde m_0)} \right\} +\frac{\lambda}{2} (1+t)^{2\theta} \| \widehat h(\xi) \|_{H^{s,*}_v(\tilde m_0)}^2 \\
&\qquad\lesssim  (1+t)^{2\theta} \lla \widehat Q ( h , h) (\xi), \widehat h (\xi) \rra_{L^2_v(\tilde m_0)} 
+ (1+t)^{2\theta}\lla \widehat Q ( g, h) (\xi), \widehat h (\xi) \rra_{L^2_v(\tilde m_0)} \\
&\qquad\quad
+ (1+t)^{2\theta}\lla \widehat Q ( h , g) (\xi), \widehat h (\xi) \rra_{L^2_v(\tilde m_0)}
+(1+t)^{2\theta - 1 - \frac{2(k - k_0)}{|\gamma|} } \|\widehat h (\xi) \|^2_{L^2_v(\tilde m)} .
\end{align*}

Integrating in time then taking the supremum in time in both sides gives
\begin{align*}
&\|(1+t)^\theta \widehat h (\xi) \|_{L^\infty_t L^2_v(\tilde m_0)}^2 + \| (1+t)^\theta \widehat h(\xi) \|_{L^2_t H^{s,*}_v(\tilde m_0)}^2 \\
&\qquad\lesssim  
\NN_{\tilde m_0} [h , (1+t)^\theta h , (1+t)^\theta h] (\xi)^2 
+ \NN_{\tilde m_0} [g , (1+t)^\theta  h, (1+t)^\theta h] (\xi)^2 \\
&\qquad\quad 
+ \NN_{\tilde m_0} [(1+t)^\theta h , g , (1+t)^\theta h] (\xi)^2  
+ \|\widehat h (\xi) \|^2_{L^\infty_t L^2_v(m)} 
+ \|\widehat f_0 (\xi) \|_{L^2_v(m_0)}^2,
\end{align*}
where for the fourth term in the right-hand side we have used the fact that $\theta<(k-k_0)/|\gamma|$ so that $(1+t)^{2\theta - 1 - \frac{2(k - k_0)}{|\gamma|} }$ is integrable. Hence we deduce
\begin{equation}\label{eq:hN+1xi_X1starm0}
\begin{aligned}
&\|(1+t)^\theta \widehat h (\xi) \|_{L^\infty_t L^2_v(\tilde m_0)} + \| (1+t)^\theta \widehat h(\xi) \|_{L^2_t H^{s,*}_v(\tilde m_0)} \\
&\qquad\lesssim  
\NN_{\tilde m_0} [h , (1+t)^\theta h, (1+t)^\theta h] (\xi)
+ \NN_{\tilde m_0} [g, (1+t)^\theta h , (1+t)^\theta h] (\xi) \\
&\qquad\quad 
+ \NN_{\tilde m_0} [(1+t)^\theta h , g , (1+t)^\theta h] (\xi)  
+ \|\widehat h (\xi) \|_{L^\infty_t L^2_v(\tilde m)} 
+ \|\widehat f_0 (\xi) \|_{L^2_v(\tilde m_0)}.
\end{aligned}
\end{equation}

Using Propositions~\ref{prop:nonlinear_Nm_fgh} we have
$$
\begin{aligned}
\| \NN_{\tilde{m}_0} [h , (1+t)^\theta h , (1+t)^\theta h] \|_{L^p_\xi} 
\lesssim \| \la v \ra^{2s} (1+t)^\theta\widehat h \|_{L^1_\xi L^2_t H^{s,*}_v(\tilde{m}_0)}^{1/2} \| h \|_{\mathscr{F}^p(\tilde{m})}^{1/2} \| h \|_{\mathscr{F}^p_\star(\tilde{m}_0)}^{1/2}, 
\end{aligned}
$$
and
$$
\begin{aligned}
&\| \NN_{\tilde{m}_0} [g ,  (1+t)^\theta h ,  (1+t)^\theta h] \|_{L^p_\xi} 
+ \| \NN_{\tilde{m}_0} [ (1+t)^\theta h , g , (1+t)^\theta  h] \|_{L^p_\xi}\\
&\quad
\lesssim  \| \widehat g \|_{L^1_\xi L^2_t H^{s,**}_v(\mu^{-1/2})}^{1/2}   \| h \|_{\mathscr{F}^p_\star(\tilde{m}_0)}
+ \|  g \|_{\mathscr{G}^p}^{1/2}  \| \la v \ra^{2s} (1+t)^\theta \widehat h \|_{L^1_\xi L^2_t H^{s,*}_v(\tilde{m}_0)}^{1/2} \| h \|_{\mathscr{F}^p_\star(\tilde{m}_0)}^{1/2}.
\end{aligned}
$$
Arguing as in previous step, using the bounds on $h$ and $g$ from estimate \eqref{eq:bound_h_g} together with \eqref{eq:h_L1xi_Xm}--\eqref{eq:g_L1xi_E}, we deduce that 
$$
\begin{aligned}
&\| \NN_{\tilde{m}_0} [h , (1+t)^\theta h , (1+t)^\theta h] \|_{L^p_\xi} 
+ \| \NN_{\tilde{m}_0} [g ,  (1+t)^\theta h ,  (1+t)^\theta h] \|_{L^p_\xi} 
+ \| \NN_{\tilde{m}_0} [ (1+t)^\theta h , g , (1+t)^\theta  h] \|_{L^p_\xi} \\
&\quad\lesssim
\| f_0 \|_{\spp(m)}^{1/2} \| h \|_{\mathscr{F}^p(\tilde{m})}^{1/2} \| h \|_{\mathscr{F}^p_\star(\tilde{m}_0)}^{1/2}
+ \| f_0 \|_{\spp(m)}^{1/2}  \| h \|_{\mathscr{F}^p_\star(\tilde{m}_0)}
+ \| f_0 \|_{\spp(m)}^{1/2} \|  g \|_{\mathscr{G}^p}^{1/2} \| h \|_{\mathscr{F}^p_\star(\tilde{m}_0)}^{1/2},
\end{aligned}
$$
therefore taking the $L^p_\xi$ norm in the above estimate implies
\begin{align*}
\| h \|_{\mathscr{F}^p_\star(\tilde m_0)} 
&\lesssim   
\| f_0 \|_{\spp(m)}^{1/2} \| h \|_{\mathscr{F}^p(\tilde{m})}^{1/2} \| h \|_{\mathscr{F}^p_\star(\tilde{m}_0)}^{1/2}
+ \| f_0 \|_{\spp(m)}^{1/2}\| h \|_{\mathscr{F}^p_\star(\tilde{m}_0)}
+ \| f_0 \|_{\spp(m)}^{1/2} \|  g \|_{\mathscr{G}^p}^{1/2} \| h \|_{\mathscr{F}^p_\star(\tilde{m}_0)}^{1/2} \\
&\quad
+ \| h \|_{\mathscr{F}^p(\tilde m)} 
+ \|\widehat f_0 (\xi) \|_{L^p_\xi L^2_v(\tilde m_0)}.
\end{align*}
Recalling that $\| f_0 \|_{\spp(m)} + \| \widehat f_0 \|_{L^p_\xi L^2_v (\la v \ra^{-8s} m)} \le \eps_0$ and taking $\eps_0>0$ small enough yields
\begin{equation}\label{eq:h_Fp_star}
\begin{aligned}
\| h \|_{\mathscr{F}^p_\star(\tilde m_0)} 
&\lesssim   
\| f_0 \|_{\spp(m)} \| h \|_{\mathscr{F}^p(\tilde{m})}
+ \| f_0 \|_{\spp(m)} \|  g \|_{\mathscr{G}^p}  \\
&\quad
+ \| h \|_{\mathscr{F}^p(\tilde m)} 
+ \|\widehat f_0 (\xi) \|_{L^p_\xi L^2_v(\tilde m_0)}.
\end{aligned}
\end{equation}

\medskip\noindent
\textit{Step 3: Estimate of $g$ in norm $\mathscr{G}^p$.}
We now turn to the second equation of \eqref{eq:systeme-hgN}. The weak coercivity estimate from Lemma~\ref{lem:dissipative_equivalent_gauss} gives us, for some $\lambda > 0$,
\begin{equation*}
\Re \lla \! \lla (\LLL-iv \cdot \xi) \widehat g(\xi), \widehat g(\xi) \rra \! \rra_{L^2_v( \mu^{-1/2})} \le - \lambda \| \widehat g(\xi) \|_{H^{s,**}_v(\mu^{-1/2})}^2,
\end{equation*}
thus
\begin{equation}\label{eq:dtgN+1_xi}
\begin{aligned}
&\frac{1}{2} \frac{\mathrm d}{\mathrm d t} \Nt \widehat g (\xi) \Nt^2_{L^2_v( \mu^{-1/2})} +\lambda \| \widehat g(\xi) \|_{H^{s,**}_v( \mu^{-1/2})}^2 \\
&\qquad\lesssim  
\lla \!\! \lla \widehat Q (  g ,  g) (\xi), \widehat g (\xi) \rra \!\! \rra_{L^2_v(\mu^{-1/2})}
+ \| \widehat h (\xi) \|_{L^2_v(\tilde m_0)} \| \widehat g (\xi) \|_{L^2_v(\mu^{-1/2})},
\end{aligned}
\end{equation}
where we have used that $\Nt \cdot \Nt_{L^2_v(\mu^{-1/2})}$ and  $\| \cdot \|_{L^2_v(\mu^{-1/2})}$ are equivalent and that the operator $\AA$ has compact support in velocity.

Integrating in time then taking the supremum in time in both sides gives
\begin{align*}
&\|\widehat g (\xi) \|_{L^\infty_t L^2_v(\mu^{-1/2})}^2 + \| \widehat g(\xi) \|_{L^2_t H^{s,**}_v (\mu^{-1/2})}^2 \\
&\qquad\lesssim  
\widetilde \NN_{0} [g , g , g] (\xi)^2 
+ \| (1+t)^\theta \widehat h (\xi) \|_{L^\infty_t L^2_v(\tilde m_0)} \| \widehat g (\xi) \|_{L^\infty_t L^2_v(\la v \ra^q \mu^{-1/2})},
\end{align*}
where we have used the fact that $\theta > 1$ so that $(1+t)^{-\theta}$ is integrable in the last term. Hence we deduce
\begin{equation}\label{eq:gN+1xi_E1}
\begin{aligned}
&\|\widehat g (\xi) \|_{L^\infty_t L^2_v(\mu^{-1/2})} + \| \widehat g(\xi) \|_{L^2_t H^{s,**}_v (\mu^{-1/2})} \\
&\qquad\lesssim  
\widetilde \NN_{0} [g , g , g] (\xi)
+ \| (1+t)^\theta \widehat h (\xi) \|_{L^\infty_t L^2_v(\tilde m_0)}.
\end{aligned}
\end{equation}

Observe that, since $p \in (2,\infty]$, we have by Proposition~\ref{prop:tildeNq_fgh}
$$
\begin{aligned}
\| \widetilde \NN_{q} [g , g , g] \|_{L^p_\xi} 
\lesssim  \| g \|_{\mathscr{G}^p}^{3/2} + \| g \|_{\mathscr{G}^p} \| \hat g \|_{L^1_\xi L^2_t H^{s,**}_v (\mu^{-1/2})}^{1/2} ,
\end{aligned}
$$
so that arguing as in previous steps, using the bounds on $g$ from estimate \eqref{eq:bound_h_g} together with \eqref{eq:g_L1xi_E}, we deduce 
$$
\| \widetilde \NN_{0} [g , g , g] \|_{L^p_\xi} 
\lesssim  \| g \|_{\mathscr{G}^p}^{3/2} + \| f_0 \|_{\spp(m)}^{1/2} \| g \|_{\mathscr{G}^p} . 
$$ 
Therefore taking the $L^p_\xi$ norm of \eqref{eq:gN+1xi_E1} implies
\begin{align*}
\| g \|_{\mathscr{G}^p} 
&\lesssim  
\| g \|_{\mathscr{G}^p}^{3/2} + \| f_0 \|_{\spp(m)}^{1/2} \| g \|_{\mathscr{G}^p}
+ \| h \|_{\mathscr{F}^p_\star(\tilde m_0)} ,
\end{align*}
thus taking $\eps_0 >0$ small enough, recalling that $\| f_0 \|_{\spp(m)} + \| \widehat f_0 \|_{L^p_\xi L^2_v (\la v \ra^{-8s} m)} \le \eps_0$,  gives
\begin{equation}\label{eq:g_Gp}
\| g \|_{\mathscr{G}^p} 
\lesssim  
\| g \|_{\mathscr{G}^p}^{3/2} + \| h \|_{\mathscr{F}^p_\star(\tilde m_0)}. 
\end{equation}

Therefore gathering \eqref{eq:h_Fp}--\eqref{eq:h_Fp_star}--\eqref{eq:g_Gp} and taking $\eps_0 >0$ small enough we obtain
\begin{equation}\label{eq:h_Fp_g_Gp}
\| h \|_{\mathscr{F}^p(\tilde m)} + \| h \|_{\mathscr{F}^p_\star (\tilde m_0)} + \| g \|_{\mathscr{G}^p}
\lesssim \| f_0 \|_{\spp(m)} + \| \widehat f_0 \|_{L^p_\xi L^2_v(\tilde{m})} \le \eps_0.
\end{equation}

\medskip\noindent
\textit{Step 4: Estimate of $g$ in $\spgt_{0,\star}$.}
We finally turn to the estimation of $g$ in $\spgt_{0,\star}$, which is the most delicate one. Arguing similarly as for obtaining~\eqref{eq:g_E}, we get for some $\lambda>0$
$$
\frac{1}{2} \frac{\mathrm d}{\mathrm d t} \Nt g \Nt_{\spg}^2 + \lambda \| g \|_{\disg}^2 \lesssim  \Nt g \Nt_{\spg} \| g \|_{\disg}^2
	 +  \| h \|_{\spp( \la v \ra^{\gamma/2} m_0)} \Nt g \Nt_{\spg}.
$$
where we observe that, in comparison with \eqref{eq:g_E}, we have a weight $\la v \ra^{\gamma/2} m_0$ instead of $m_0$ for $h$ on the second term in the right-hand side, which is possible since $\AA$ has compact support in velocity. Thanks to estimate \eqref{eq:bound_h_g} and the fact that $\eps_0>0$ is small enough, we deduce
\begin{equation}\label{eq:g_E_bis}
\frac{1}{2} \frac{\mathrm d}{\mathrm d t} \Nt g \Nt_{\spg}^2 + \frac{\lambda}{2} \| g \|_{\disg}^2 \lesssim  \| h \|_{\spp( \la v \ra^{\gamma/2} m_0)} \Nt g \Nt_{\spg}.
\end{equation}

We shall now bound by below the dissipation term $\| g \|_{\disg}$, by observing that
$$
\frac{1}{c_0} \Nt g \Nt_{\spg}^2 \le \int_{\R^3}  (1+ |\xi|^{6}) \| \widehat g (\xi) \|_{L^{2}_v(\mu^{-1/2})}^2 \, \d \xi \le c_0 \Nt g \Nt_{\spg}^2
$$ 
and
$$
\frac{1}{c_1}\| g \|_{\disg}^2 \le \int_{\R^3}  (1+ |\xi|^{6}) \| \widehat g (\xi) \|_{H^{s,**}_v(\mu^{-1/2})}^2 \, \d \xi \le c_1 \| g \|_{\disg}^2
$$
for some constants $c_0,c_1>0$. We therefore shall focus below on the quantity $\| \widehat g (\xi) \|_{H^{s,**}_v(\mu^{-1/2})}$, that we recall is given by
\begin{equation}\label{eq:hatg_xi_dissipation}
\| \widehat g (\xi) \|_{H^{s,**}_v(\mu^{-1/2})}^2 = \| \widehat g^\perp (\xi) \|_{H^{s,*}_v(\mu^{-1/2})}^2 + \frac{|\xi|^2}{1+|\xi|^2} \| \pi\widehat g (\xi) \|_{L^2_v(\mu^{-1/2})}^2.
\end{equation}

First of all, for the first term in the right-hand side of \eqref{eq:hatg_xi_dissipation} we can argue similarly as in Step 2 above in order to obtain the interpolation inequality \eqref{eq:interpolation_Rt_bis}, therefore we obtain, using that $\|\la v \ra^{\gamma / 2} \cdot \|_{L^2_v(\mu^{-1/2})} \le \| \cdot \|_{H^{s,*}_v(\mu^{-1/2})}$, that for all $\xi \in \R^3$ and $t \ge 0$ there holds
\begin{equation}\label{eq:interpolation1}
\begin{aligned}
&\frac{4\vartheta c_0 c_1}{\lambda} (1+t)^{-1} \Nt \widehat g^\perp (\xi)  \Nt_{L^2_v(\mu^{-1/2})}^2 \\
&\qquad 
\le \| \widehat g^\perp (\xi)  \|_{H^{s,*}_v(\mu^{-1/2})}^2 + C (1+t)^{ -1 - \frac{2q}{|\gamma|} } \| \widehat g^\perp (\xi)  \|_{L^2_v(\la v \ra^q \mu^{-1/2})}^2,
\end{aligned}
\end{equation}
for some constant $C>0$ (depending only on $\lambda,\vartheta,q,\gamma$).

We now investigate the  second term in the right-hand side of \eqref{eq:hatg_xi_dissipation} by splitting the analysis into two parts : low frequencies $|\xi| \le 1$ and high frequencies $|\xi| \ge 1$. 
For high frequencies $|\xi| \ge 1$ we observe that $ \frac{|\xi|^2}{1+|\xi|^2} \ge \frac{1}{2}$, which implies
$$
\frac12 \mathbf{1}_{|\xi|\ge 1}  \| \widehat g(\xi) \|_{H^{s,*}_v( \mu^{-1/2})}^2 \le
\mathbf{1}_{|\xi|\ge 1} \| \widehat g(\xi) \|_{H^{s,**}_v( \mu^{-1/2})}^2 \le \mathbf{1}_{|\xi|\ge 1} \| \widehat g(\xi) \|_{H^{s,*}_v( \mu^{-1/2})}^2.
$$
Therefore arguing as in \eqref{eq:interpolation1} we obtain 
\begin{equation}\label{eq:interpolation2}
\begin{aligned}
&\frac{4\vartheta c_0 c_1}{\lambda} (1+t)^{-1} \mathbf{1}_{|\xi|\ge 1} \Nt \widehat g(\xi)  \Nt_{L^2_v(\mu^{-1/2})}^2 \\
&\qquad
\le \mathbf{1}_{|\xi|\ge 1} \| \widehat g_{N+1} (\xi)  \|_{H^{s,**}_v(\mu^{-1/2})}^2 + C (1+t)^{ -1 - \frac{2q}{|\gamma|} } \mathbf{1}_{|\xi|\ge 1} \| \widehat g_{N+1} (\xi)  \|_{L^2_v(\la v \ra^q \mu^{-1/2})}^2.
\end{aligned}
\end{equation}
We now turn to the the case of low frequencies $|\xi| \le 1$. 
Recalling that $2 < p \le \infty$, we fix a parameter $r>1$ such that
\begin{equation}\label{eq:condition_r}
1+ \frac{2p}{3(p-2)} < r < 1+ \frac{1}{2\vartheta} ,
\end{equation}
which is possible because $0<\vartheta < \frac{3}{2} (\frac{1}{2} - \frac{1}{p})$. Remarking that, since $|\xi| \le 1$, there holds $ |\xi|^2 \le \frac{2|\xi|^2}{1+|\xi|^2}$, we obtain by Young's inequality that for any $\eta>0$ there is $C_\eta>0$ such that
$$
\forall |\xi| \le 1, ~ \forall t \ge 0, \quad 1 \le \eta (1+t)\frac{|\xi|^2}{1+|\xi|^2} + C_\eta (1+t)^{- \frac{1}{r-1}} |\xi|^{- \frac{2}{r-1}}.
$$
We thus deduce
\begin{equation}\label{eq:interpolation3}
\begin{aligned}
&\eta^{-1} (1+t)^{-1} \mathbf{1}_{|\xi|\le 1} \| \pi \widehat{ g} (\xi) \|_{L^2_v(\mu^{-1/2})}^2 \\
&\qquad 
\le   \mathbf{1}_{|\xi|\le 1} \frac{|\xi|^2}{1+|\xi|^2} \| \pi \widehat{ g} (\xi) \|_{L^2_v(\mu^{-1/2})}^2 + C'_\eta (1+t)^{-1- \frac{1}{r-1}}  \mathbf{1}_{|\xi|\le 1} |\xi|^{- \frac{2}{r-1}}
\| \pi \widehat{ g} (\xi) \|_{L^2_v(\mu^{-1/2})}^2.
\end{aligned}
\end{equation}

Gathering \eqref{eq:interpolation1}--\eqref{eq:interpolation2}--\eqref{eq:interpolation3} and choosing $\eta>0$ appropriately yields
\begin{align*}
\frac{\lambda}{4 c_1} \| \widehat  g(\xi) \|_{H^{s, **}_v(\mu^{-1/2})}^2 
&\ge c_0 \vartheta (1+t)^{-1} \Nt \widehat g(\xi) \Nt_{L^2_v(\mu^{-1/2})}^2
- C (1+t)^{-1-\frac{2q}{|\gamma|}} \| \widehat g(\xi) \|_{L^2_v(\la v \ra^q \mu^{-1/2})}^2 \\
&\quad
-C (1+t)^{-1- \frac{1}{r-1}}  \mathbf{1}_{|\xi|\le 1} |\xi|^{- \frac{2}{r-1}}
\| \pi \widehat g (\xi) \|_{L^2_v(\mu^{-1/2})}^2,
\end{align*}
which implies
\begin{align*}
\frac{\lambda}{4} \| g \|_{\disg}^2 
&\ge \vartheta (1+t)^{-1} \Nt  g \Nt_{\spg}^2
- C (1+t)^{-1-\frac{2q}{|\gamma|}} \|  g \|_{\spg_q}^2 \\
&\quad
-C (1+t)^{-1- \frac{1}{r-1}}  \int_{\R^3} \mathbf{1}_{|\xi|\le 1} |\xi|^{- \frac{2}{r-1}} \la \xi \ra^{6}
\| \pi  \widehat g(\xi) \|_{L^2_v(\mu^{-1/2})}^2 \, \d \xi.
\end{align*}
Applying H\"older's inequality for the last term we get
$$
\int_{\R^3} \mathbf{1}_{|\xi|\le 1} |\xi|^{- \frac{2}{r-1}} \la \xi \ra^{6}
\| \pi   \widehat g(\xi) \|_{L^2_v(\mu^{-1/2})}^2 \, \d \xi \lesssim \| \pi  \widehat g\|_{L^p_\xi L^\infty_t L^2_v (\mu^{-1/2})}^2
$$
since $r>1+\frac{2p}{3(p-2)}$. Thus we obtain
\begin{align*}
\frac{\lambda}{4} \| g \|_{\disg}^2 
&\ge \vartheta (1+t)^{-1} \Nt  g \Nt_{\spg}^2
- C (1+t)^{-1-\frac{2q}{|\gamma|}} \|  g \|_{\spg_q}^2 
-C (1+t)^{-1- \frac{1}{r-1}}  \| g \|_{\mathscr{G}^p}^2,
\end{align*}
which gives, coming back to \eqref{eq:g_E_bis},
$$
\begin{aligned}
&\frac{1}{2} \frac{\mathrm d}{\mathrm d t} \Nt g \Nt^2_{\spg} 
+\frac{\lambda}{4} \|  g \|_{\disg}^2 
+ \vartheta (1+t)^{-1} \Nt g \Nt_{\spg}^2 \\
&\qquad\lesssim  
\| h \|_{\spp( \la v \ra^{\gamma/2} m_0)} \Nt g \Nt_{\spg}
+ (1+t)^{-1-\frac{2q}{|\gamma|}} \|  g \|_{\spg_q}^2
+ (1+t)^{-1- \frac{1}{r-1}} \| g \|_{\mathscr{G}^p}^2.
\end{aligned}
$$
Multiplying by $(1+t)^{2 \vartheta}$ we thus get
$$
\begin{aligned}
&\frac{1}{2} \frac{\mathrm d}{\mathrm d t} \left\{ (1+t)^{2 \vartheta} \Nt g \Nt^2_{\spg} \right\}
+\frac{\lambda}{4} (1+t)^{2 \vartheta}\|  g \|_{\disg}^2  \\
&\qquad\lesssim  
(1+t)^{2 \vartheta}\| h \|_{\spp( \la v \ra^{\gamma/2} m_0)} \Nt g \Nt_{\spg}
+ (1+t)^{2 \vartheta-1-\frac{2q}{|\gamma|}} \|  g \|_{\spg_q}^2 
+ (1+t)^{2 \vartheta-1- \frac{1}{r-1}}  \| g \|_{\mathscr{G}^p}^2.
\end{aligned}
$$
Integrating in time last estimate implies (recall the definition of $\spgt_{0,\star}$ from \eqref{eq:E_star})
$$
\begin{aligned}
\| g \|_{\spgt_{0,\star}}^2 
\lesssim \| h \|_{\sppdt(m_0)} \| g \|_{\spgt_{0,\star}} + \| g \|_{\spgt_q}^2 + \| g \|_{\mathscr{G}^p}^2,
\end{aligned}
$$
where for the first term in the right-hand side we have used that $\theta> \vartheta + 1/2$ so that $(1+t)^{2(\vartheta-\theta)}$ is integrable and that $\| \cdot \|_{\spp( \la v \ra^{\gamma/2} m_0))} \le \| \cdot \|_{\dispa( m_0)}$; for the second one that $q > |\gamma| \vartheta$ so that $(1+t)^{2 \vartheta-1 -  \frac{2q}{|\gamma|}}$ is integrable; and for the third one that $r<1+1/(2\vartheta)$ so that $(1+t)^{2\vartheta -1 - \frac{1}{r-1}}$ is integrable.

Using the bounds \eqref{eq:bound_h_g} and \eqref{eq:h_Fp_g_Gp} yields
\begin{equation}\label{eq:g_dissipation}
\begin{aligned}
\| g \|_{\spgt_{0,\star}}
\lesssim \| f_0 \|_{\spp(m)} + \| \widehat f_0 \|_{L^p_\xi L^2_v(\tilde m)} \le \eps_0,
\end{aligned}
\end{equation}
which together with the second estimate of \eqref{eq:bound_h_g} concludes the proof of~\eqref{eq:theo:decay_estimate}.

\bigskip
\bibliographystyle{acm}
\bibliography{biblio}

\begin{thebibliography}{10}

\bibitem{ADVW2000}
{\sc Alexandre, R., Desvillettes, L., Villani, C., and Wennberg, B.}
\newblock Entropy dissipation and long-range interactions.
\newblock {\em Arch. Ration. Mech. Anal. 152}, 4 (2000), 327--355.

\bibitem{AHL2019}
{\sc Alexandre, R., H{\'e}rau, F., and Li, W.-X.}
\newblock Global hypoelliptic and symbolic estimates for the linearized
  {Boltzmann} operator without angular cutoff.
\newblock {\em J. Math. Pures Appl. (9) 126\/} (2019), 1--71.

\bibitem{AMUXY3}
{\sc Alexandre, R., Morimoto, Y., Ukai, S., Xu, C.-J., and Yang, T.}
\newblock Regularizing effect and local existence for the non-cutoff
  {B}oltzmann equation.
\newblock {\em Arch. Ration. Mech. Anal. 198\/} (2009), 39--123.

\bibitem{AMUXY2011_2}
{\sc Alexandre, R., Morimoto, Y., Ukai, S., Xu, C.-J., and Yang, T.}
\newblock The {Boltzmann} equation without angular cutoff in the whole space.
  {II}: {Global} existence for hard potential.
\newblock {\em Anal. Appl., Singap. 9}, 2 (2011), 113--134.

\bibitem{AMUXY2011_3}
{\sc Alexandre, R., Morimoto, Y., Ukai, S., Xu, C.-J., and Yang, T.}
\newblock The {Boltzmann} equation without angular cutoff in the whole space:
  qualitative properties of solutions.
\newblock {\em Arch. Ration. Mech. Anal. 202}, 2 (2011), 599--661.

\bibitem{AMUXY2011_1}
{\sc Alexandre, R., Morimoto, Y., Ukai, S., Xu, C.-J., and Yang, T.}
\newblock Global existence and full regularity of the {Boltzmann} equation
  without angular cutoff.
\newblock {\em Commun. Math. Phys. 304}, 2 (2011), 513--581.

\bibitem{AMUXY2012}
{\sc Alexandre, R., Morimoto, Y., Ukai, S., Xu, C.-J., and Yang, T.}
\newblock The {Boltzmann} equation without angular cutoff in the whole space.
  {I}: global existence for soft potential.
\newblock {\em J. Funct. Anal. 262}, 3 (2012), 915--1010.

\bibitem{AMUXY2013}
{\sc Alexandre, R., Morimoto, Y., Ukai, S., Xu, C.-J., and Yang, T.}
\newblock Local existence with mild regularity for the {Boltzmann} equation.
\newblock {\em Kinet. Relat. Models 6}, 4 (2013), 1011--1041.

\bibitem{AMSY2020}
{\sc Alonso, R., Morimoto, Y., Sun, W., and Yang, T.}
\newblock Non-cutoff {Boltzmann} equation with polynomial decay perturbations.
\newblock {\em Rev. Mat. Iberoam. 37}, 1 (2021), 189--292.

\bibitem{BMM2019}
{\sc Briant, M., Merino-Aceituno, S., and Mouhot, C.}
\newblock From {Boltzmann} to incompressible {Navier}-{Stokes} in {Sobolev}
  spaces with polynomial weight.
\newblock {\em Anal. Appl., Singap. 17}, 1 (2019), 85--116.

\bibitem{C1980_2}
{\sc Caflisch, R.~E.}
\newblock The {Boltzmann} equation with a soft potential. {II}: {Nonlinear},
  spatially- periodic.
\newblock {\em Commun. Math. Phys. 74\/} (1980), 97--109.

\bibitem{C2022}
{\sc Cao, C.}
\newblock {Cutoff Boltzmann equation with polynomial perturbation near
  Maxwellian}, 2022.
\newblock arXiv:2207.10272.

\bibitem{CHJ2022}
{\sc Cao, C., He, L.-B., and Ji, J.}
\newblock {Propagation of moments and sharp convergence rate for inhomogeneous
  non-cutoff Boltzmann equation with soft potentials}, 2022.
\newblock arXiv:2204.01394.

\bibitem{CTW}
{\sc {Carrapatoso}, K., {Tristani}, I., and {Wu}, K.-C.}
\newblock {Cauchy problem and exponential stability for the inhomogeneous
  Landau equation}.
\newblock {\em {Arch. Ration. Mech. Anal.} 221}, 1 (2016), 363--418.

\bibitem{D2022_2}
{\sc Deng, D.}
\newblock Global existence of non-cutoff {Boltzmann} equation in weighted
  {Sobolev} space.
\newblock {\em J. Stat. Phys. 188}, 3 (2022), 51.
\newblock Id/No 25.

\bibitem{D2022}
{\sc Deng, D.}
\newblock On the {Cauchy} problem of {Boltzmann} equation with a very soft
  potential.
\newblock {\em J. Evol. Equ. 22}, 1 (2022), 58.
\newblock Id/No 17.

\bibitem{D2008}
{\sc Duan, R.}
\newblock On the {Cauchy} problem for the {Boltzmann} equation in the whole
  space: {Global} existence and uniform stability in
  {{\(L_{\xi}^{2}(H_{x}^{N})\)}}.
\newblock {\em J. Differ. Equations 244}, 12 (2008), 3204--3234.

\bibitem{DLSS2021}
{\sc Duan, R., Liu, S., Sakamoto, S., and Strain, R.~M.}
\newblock Global mild solutions of the {L}andau and non-cutoff {B}oltzmann
  equations.
\newblock {\em Comm. Pure Appl. Math. 74}, 5 (2021), 932--1020.

\bibitem{DSU2022}
{\sc Duan, R., Sakamoto, S., and Ueda, Y.}
\newblock An {$L^1_k \cap L^p_k$} approach for the non-cutoff {B}oltzmann
  equation in {$\mathbb R^3$}.
\newblock arXiv:2209.10815, 2022.

\bibitem{EP1975}
{\sc Ellis, R.~S., and Pinsky, M.~A.}
\newblock The first and second fluid approximation to the linearized
  {Boltzmann} equation.
\newblock {\em J. Math. Pures Appl. (9) 54\/} (1975), 125--156.

\bibitem{Glassey96}
{\sc Glassey, R.~T.}
\newblock {\em The {C}auchy problem in kinetic theory}.
\newblock Society for Industrial and Applied Mathematics (SIAM), Philadelphia,
  PA, 1996.

\bibitem{G1965}
{\sc Grad, H.}
\newblock Asymptotic equivalence of the {Navier}-{Stokes} and nonlinear
  {Boltzmann} equations.
\newblock Proc. {Sympos}. {Appl}. {Math}. 17, 154-183 (1965)., 1965.

\bibitem{GS2011_2}
{\sc Gressman, P.~T., and Strain, R.~M.}
\newblock Global classical solutions of the {Boltzmann} equation without
  angular cut-off.
\newblock {\em J. Am. Math. Soc. 24}, 3 (2011), 771--847.

\bibitem{GS2011_1}
{\sc Gressman, P.~T., and Strain, R.~M.}
\newblock Sharp anisotropic estimates for the {Boltzmann} collision operator
  and its entropy production.
\newblock {\em Adv. Math. 227}, 6 (2011), 2349--2384.

\bibitem{GMM2017}
{\sc Gualdani, M.~P., Mischler, S., and Mouhot, C.}
\newblock Factorization of non-symmetric operators and exponential
  {{\(H\)}}-theorem.
\newblock {\em M{\'e}m. Soc. Math. Fr., Nouv. S{\'e}r. 153\/} (2017), 3--137.

\bibitem{G2003}
{\sc Guo, Y.}
\newblock Classical solutions to the {Boltzmann} equation for molecules with an
  angular cutoff.
\newblock {\em Arch. Ration. Mech. Anal. 169}, 4 (2003), 305--353.

\bibitem{H2018}
{\sc He, L.-B.}
\newblock Sharp bounds for {Boltzmann} and {Landau} collision operators.
\newblock {\em Ann. Sci. {\'E}c. Norm. Sup{\'e}r. (4) 51}, 5 (2018),
  1253--1341.

\bibitem{HTT2020}
{\sc H{\'e}rau, F., Tonon, D., and Tristani, I.}
\newblock Regularization estimates and {Cauchy} theory for inhomogeneous
  {Boltzmann} equation for hard potentials without cut-off.
\newblock {\em Commun. Math. Phys. 377}, 1 (2020), 697--771.

\bibitem{K1990}
{\sc Kawashima, S.}
\newblock The {Boltzmann} equation and thirteen moments.
\newblock {\em Japan J. Appl. Math. 7}, 2 (1990), 301--320.

\bibitem{LYY2004}
{\sc Liu, T.-P., Yang, T., and Yu, S.-H.}
\newblock Energy method for {Boltzmann} equation.
\newblock {\em Physica D 188}, 3-4 (2004), 178--192.

\bibitem{M2006}
{\sc Mouhot, C.}
\newblock Rate of convergence to equilibrium for the spatially homogeneous
  {Boltzmann} equation with hard potentials.
\newblock {\em Commun. Math. Phys. 261}, 3 (2006), 629--672.

\bibitem{S1983}
{\sc Shizuta, Y.}
\newblock On the classical solutions of the {Boltzmann} equation.
\newblock {\em Commun. Pure Appl. Math. 36\/} (1983), 705--754.

\bibitem{SS2014}
{\sc Sohinger, V., and Strain, R.~M.}
\newblock The {Boltzmann} equation, {Besov} spaces, and optimal time decay
  rates in {{\(\mathbb{R}_x^n\)}}.
\newblock {\em Adv. Math. 261\/} (2014), 274--332.

\bibitem{Strain}
{\sc {Strain}, R.~M.}
\newblock {Optimal time decay of the non cut-off Boltzmann equation in the
  whole space}.
\newblock {\em {Kinet. Relat. Models} 5}, 3 (2012), 583--613.

\bibitem{GS2006}
{\sc Strain, R.~M., and Guo, Y.}
\newblock Almost exponential decay near {Maxwellian}.
\newblock {\em Commun. Partial Differ. Equations 31}, 1-3 (2006), 417--429.

\bibitem{GS2008}
{\sc Strain, R.~M., and Guo, Y.}
\newblock Exponential decay for soft potentials near {Maxwellian}.
\newblock {\em Arch. Ration. Mech. Anal. 187}, 2 (2008), 287--339.

\bibitem{U1974}
{\sc Ukai, S.}
\newblock On the existence of global solutions of mixed problem for non-linear
  {Boltzmann} equation.
\newblock {\em Proc. Japan Acad. 50\/} (1974), 179--184.

\bibitem{U1976}
{\sc Ukai, S.}
\newblock Les solutions globales de l'{\'e}quation de {Boltzmann} dans l'espace
  tout entier et dans le demi-espace.
\newblock {\em C. R. Acad. Sci., Paris, S{\'e}r. A 282\/} (1976), 317--320.

\bibitem{U1984}
{\sc Ukai, S.}
\newblock Local solutions in {Gevrey} classes to the nonlinear {Boltzmann}
  equation without cutoff.
\newblock {\em Japan J. Appl. Math. 1}, 1 (1984), 141--156.

\bibitem{AU1982}
{\sc Ukai, S., and Asano, K.}
\newblock On the {Cauchy} problem of the {Boltzmann} equation with a soft
  potential.
\newblock {\em Publ. Res. Inst. Math. Sci. 18\/} (1982), 477--519.

\bibitem{FW2022}
{\sc Wang, H., and Fang, Z.}
\newblock Global existence and time decay of the non-cutoff {Boltzmann}
  equation with hard potential.
\newblock {\em Nonlinear Anal., Real World Appl. 63\/} (2022), 32.
\newblock Id/No 103416.

\bibitem{SW2021}
{\sc Wu, Y., and Sun, J.}
\newblock Asymptotic behavior of linearized {Boltzmann} equations for soft
  potentials with cut-off.
\newblock {\em Electron. J. Differ. Equ. 2021\/} (2021), 21.
\newblock Id/No 46.

\end{thebibliography}

\end{document}